\documentclass{article}
\usepackage{latexsym}
\usepackage{leftidx}
\usepackage{slashed}
\usepackage{amsmath,amsthm,pdfpages,color,pdfsync}
\usepackage{enumerate}
\usepackage{amssymb}
\usepackage{upgreek}

\usepackage[margin=1.3in,dvips]{geometry}

\def\l{\langle}
\def\r{\rangle}

\def\N{{\bf N}}

\newcommand{\vol}{\textnormal{vol}}

\def\f12{\frac 1 2}
\def\L{L}
\def\Lu{\underline{L}}

\def\ga{\gamma}

\def\ep{\epsilon}

\def\La{\Lambda}

\def\om{\omega}

\def\N{\mathcal{N}}

\def\Lb{\underline{L}}

\def\pa{\partial}
\def\les{\lesssim}

\def\cD{\mathcal{D}}
\def\R{\mathbb{R}}
\newcommand{\D}{\mbox{$D \mkern-13mu /$\,}}

\newcommand{\lap}{\mbox{$\Delta \mkern-13mu /$\,}}

\newcommand{\nabb}{\mbox{$\nabla \mkern-13mu /$\,}}
\newcommand{\J}{\mbox{$J \mkern-13mu /$\,}}

\newtheorem{thm}{Theorem}
\newtheorem{Thm}{Theorem}[section]

\newtheorem{Prop}{Proposition}[section]

\newtheorem{Lem}{Lemma}[section]

\newtheorem{remark}{Remark}
\newtheorem{Remark}{Remark}[section]

\begin{document}

\title{Asymptotic decay for the Chern-Simons-Higgs equations}

\author{Dongyi Wei \and Shiwu Yang}

\AtEndDocument{\bigskip{\footnotesize%
  \addvspace{\medskipamount}
  \textsc{School of Mathematical Sciences, Peking University, Beijing, China} \par
  \textit{E-mail address}: \texttt{jnwdyi@pku.edu.cn} \par

  \addvspace{\medskipamount}
  \textsc{Beijing International Center for Mathematical Research, Peking University, Beijing, China} \par
  \textit{E-mail address}: \texttt{shiwuyang@math.pku.edu.cn}
  }}

\date{}

\maketitle

\begin{abstract}
In this paper, we study the long time asymptotic behaviors for solutions to the Chern-Simons-Higgs equation with a pure power defocusing nonlinearity. We obtain quantitative inverse polynomial time decay for the potential energy for all data with finite conformal energy. Consequently, the solution   decays in time in the pointwise sense for all power. We also show that  for sufficiently large power  the solution decays as quickly as linear waves. Key ingredients for the  proof include vector field method, conformal compactification and the geometric bilinear trace theorem for null hypersurface developed by Klainerman-Rodnianski.

\end{abstract}

\section{Introduction}
Let $A$ be a 1-form in $\mathbb{R}^{1+2}$ with the standard Cartesian coordinates $(t, x)=(t, x_1, x_2)$. Define the associate covariant derivative 
\begin{align*}
D_\mu=\pa_{\mu}+i A_{\mu},
\end{align*}
where the Greek indice $\mu$ runs from $0$ to $2$ and $\pa_0$ stands for the derivative $\pa_t$. The commutator of this covariant derivative gives the curvature 2-form 
\begin{align*}
F_{\mu\nu}=-i[D_\mu, D_\nu]=(dA)_{\mu\nu}=\pa_\mu A_\nu-\pa_\nu A_\mu. 
\end{align*}
The  Chern-Simons-Higgs equations are a system for the connection field $A$ and scalar field $\phi$
\begin{equation}
  \label{eq:CSH:p:2d}
  \begin{cases}
  \Box_A\phi=D^\mu D_\mu \phi=|\phi|^{p-1}\phi,\\
  F_{\mu\nu}=\varepsilon_{\mu\nu\ga}J^{\gamma}[\phi],\quad J_{\gamma}[\phi]=\Im(\phi \cdot \overline{D_\ga\phi}),\\
  \phi(0, x)=\phi_0(x),\quad D_0\phi(0, x)=\phi_1(x),
  \end{cases}
  \end{equation}
  in which $\varepsilon_{\mu\nu\ga}$ is the antisymmetric volume element such that  $\varepsilon_{012}=1$ and $p>1$ is a constant. Raising or lowering the indices is with respect to the flat Minkowski metric $m_{\mu\nu}=diag\{-1, 1, 1\}$ on $\mathbb{R}^{1+2}$.

The Chern-Simons theory plays an important role  in planar physics and is completely different from the classical Maxwell theory in space dimension two. This new theory draws extensive attention both from physists and mathematicians for its rich structure and  practical application to certain planar condensed matter phenomena. For more general models and discussions related to this theory, we refer to \cite{Girvin05:QHE}, \cite{Dunne99:CSH}, \cite{Taubes80:Vort} and references therein.

 The Chern-Simons-Higgs model was first proposed in  \cite{Hong90:CSH}, \cite{Jackiw90:CSH}. The potential could be more general to allow the existence of nontrivial vortex solutions 
\cite{Taran96:CSH}, \cite{Taran99:CSH}, \cite{Spruck95:CSH}, which is closely related to high-temperature superconductivity, quantum Hall effect and the anyon in physics. 
The current study mainly focuses on the long time dynamics for solutions to the above equation, which has been studied extensively in the past decades. On one hand, as a model in Chern-Simons theory, global classical solution for sufficiently smooth initial data was first shown by Chae-Choe in \cite{Chae02:CSH:2d}, which is based on the observation that  the  solution never blows up in finite time. Another perspective to conclude this global existence result is to construct local in time solution in energy space since the energy is conserved. Such local well-posedness results under various gauges for data with lower regularity  could be found for example in  \cite{Huh05:CSH}, \cite{Huh07:CSH}, 
\cite{Huh11:CSH:energy}, 
\cite{Selberg13:CSH:low},
\cite{Okamoto13:CSD:2d}, 
 \cite{Huh14:CSH:1d},
\cite{Oh16:CSH:low}, 
\cite{Pecher16:CSH},  
\cite{yuan17:CSH},  
 \cite{Cho21:CSH:low}. These results  heavily rely on the null structure of the system and the choice of the gauge.

 On the other hand, the pure power nonlinearity model problem studied here is the geometric generalization of the defocusing semilinear wave equation in $\R^{1+2}$ with vanishing connection field $A$. Even for this simpler model with defocusing nonlinearity, plenty of questions regarding the long time dynamics of  the solutions remain unclear, particularly for the small power case. A typical question is whether the solution approaches or at least decays like linear solutions, which usually requires a priori dispersive estimate for the nonlinearity.  For such large data problem, the decay mechanism  lies behind the conformal symmetry of the equation, which was first used by Strauss in  \cite{Strauss:NLW:decay} and later by Ginibre-Velo in  \cite{velo85:global:sol:NLW}, \cite{Velo87:decay:NLW}  to study the long time behaviors for solutions to the defocusing semilinear wave equations with superconformal power. To go beyond the superconformal power,  Glassey and Pecher in \cite{Pecher82:decay:3d},
  \cite{Pecher82:NLW:2d} observed that the potential energy still decays in time by using  Grownwall's inequality. The $r$-weighted energy method originally introduced by Dafermos-Rodnianski in \cite{newapp} has been successfully applied to this problem by the authors in  \cite{yang:scattering:NLW},  \cite{yang:NLW:ptdecay:3D} and new multipliers were introduced 
  in \cite{tao12:1d:NLW}, \cite{Yang:NLW:1d}, \cite{yang:NLW:2D} for the lower dimensional case. 

  There are not too many results regarding the long time behaviors for solutions to the Chern-Simons-Higgs equations. Huh in \cite{Huh17:CS:conformal}  obtained potential energy decay for the above model \eqref{eq:CSH:p:2d} by using the method developed by Glassey-Pecher in  \cite{Pecher82:NLW:2d}. Small data global existence with quantitative decay estimates has been shown by Chae-Oh  in \cite{Oh17:CSH:2d:small}, in which the system  can be view as coupled nonlinear wave and Klein-Gordon equations. See recent advances and discussions  for example in \cite{Delort04:NLW:2D}, \cite{Yang:mMKG}, \cite{Dong23:2d:WKG}, \cite{Ionescu20:EKG}. 
The problem becomes much harder in space dimension one as linear wave does not decay.  
For such geometric nonlinear wave equations, one can show that the solution grows at most polynomially \cite{Huh20:MKG:1d}, \cite{Huh21:CSH:Longtime}.

The purpose of this work is to study the long time dynamics for solutions to the Chern-Simons-Higgs equation \eqref{eq:CSH:p:2d} for general large data. We extend the decay estimates obtained in  \cite{yang:NLW:2D} for the flat case when $A=0$ to solutions of \eqref{eq:CSH:p:2d}. The main new challenge is that the rough decay estimate of the solution due to the small power of nonlinearity is not sufficient to control the higher order nonlinear terms arising from the connection field $A$, for which we will exploit the special null structure of the system.

To state our main result, for  $0\leq \ga\leq 2$, define the gauge invariant weighted energy norm of the initial data
\begin{align*}
  \mathcal{E}_{k,\ga}=\sum\limits_{l\leq k}\int_{\mathbb{R}^2}(1+|x|)^{\ga+2 l}(|\bar{D}^{l+1}\phi_0|^2+|\bar{D}^l \phi_1|^2)+(1+|x|)^{\ga}|\phi_0|^{p+1}dx.
\end{align*}
Here $\bar{D}$ is short for $(D_{1}, D_{2})$. The above gauge invariant norm can be defined once we choose a representation (for example a particular gauge like Lorenz gauge) of the connection field $A$ on the initial hypersurface $\{t=0\}$, which follows by noting that  the curvature 2-form $F=dA$ is given by $\phi_0$ and $\phi_1$. In this paper we will only use the initial conformal energy $\mathcal{E}_{0, 2}$ and the first order energy $\mathcal{E}_{1, 0}$.

Our first result is the time decay of the potential energy.
\begin{thm}
  \label{thm:main:td}
  Consider the Cauchy problem to the Chern-Simons-Higgs equation \eqref{eq:CSH:p:2d} in $\mathbb{R}^{1+2}$ with a pure power defocusing nonlinearity and finite initial conformal energy $\mathcal{E}_{0, 2}$. Then the potential energy decays inverse polynomially in time 
  \begin{align*}
    \int_{\mathbb{R}^2}|\phi(t, x)|^{p+1} dx\leq C\mathcal{E}_{0, 2} (1+t)^{- \min\{\frac{p-1}{2}, 2 \}}
    ,\quad \forall t\geq 0
  \end{align*}
  for some constant $C$ depending only on $p$.
\end{thm}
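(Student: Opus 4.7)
The plan is to extend the conformal vector field method developed in \cite{yang:NLW:2D} for the flat $2$-dimensional defocusing semilinear wave equation to the gauge-covariant setting of \eqref{eq:CSH:p:2d}, treating the curvature $F$ as a perturbation that must be absorbed by exploiting the Chern-Simons constraint. The relevant multiplier is the conformal Killing field $K_0=(t^2+r^2)\pa_t+2tr\pa_r$ of Minkowski space, contracted with the gauge-covariant energy-momentum tensor
\[
T_{\mu\nu}[\phi]=\Re(D_\mu\phi\,\overline{D_\nu\phi})-\tfrac12 m_{\mu\nu}\bigl(D^\alpha\phi\,\overline{D_\alpha\phi}+\tfrac{2}{p+1}|\phi|^{p+1}\bigr).
\]
The boundary flux of $T_{\mu\nu}K_0^\nu$ on $\{t=0\}$ is, by construction, dominated by $\mathcal{E}_{0,2}$.

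The first step is to derive the covariant analogue of the conformal identity by computing $\nabla^\mu(T_{\mu\nu}K_0^\nu)$. Three kinds of bulk terms appear: a standard coefficient $c(p)\,t\,|\phi|^{p+1}$ from the nonlinearity, whose sign is favorable when $p\geq 5$ and unfavorable when $1<p<5$; a cubic-in-$\phi$ curvature contribution $F^{\mu\nu}J_\mu(K_0)_\nu$ coming from $[D_\mu,D_\nu]\phi=iF_{\mu\nu}\phi$; and a weighted potential-energy term. The crucial observation is the null structure: the Chern-Simons constraint $F_{\mu\nu}=\varepsilon_{\mu\nu\gamma}J^\gamma[\phi]$ turns $F^{\mu\nu}J_\mu(K_0)_\nu$ into an antisymmetric trilinear expression in $\phi$ and $D\phi$ which, rewritten in the null frame $\{L,\Lb,e_A\}$ adapted to the outgoing cones, is a genuine bilinear null form weighted by $t+r$. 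Such forms are precisely what the geometric bilinear trace theorem of Klainerman-Rodnianski is designed to estimate on characteristic hypersurfaces, and the resulting bound lets the curvature contribution be absorbed by the conformal energy.

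With the covariant conformal identity in hand, the decay follows by a case split on $p$. For the superconformal case $p\geq 5$ the coefficient $c(p)$ is nonnegative, and the weighted estimate
\[
\int_{\R^2}t^2|\phi(t,x)|^{p+1}\,dx\lesssim \mathcal{E}_{0,2}
\]
is immediate, yielding the $(1+t)^{-2}$ rate. For the subconformal case $1<p<5$ the coefficient has the wrong sign, so I would follow the Glassey-Pecher strategy of \cite{Pecher82:NLW:2d}, as adapted to the Chern-Simons model by Huh in \cite{Huh17:CS:conformal}. Concretely, combine the weighted energy inequality with the differential inequality satisfied by $\mathcal{F}(t)=\int|\phi(t,x)|^{p+1}\,dx$ and close the estimate by a Gronwall-type argument; the size of the wrong-sign contribution is exactly what produces the exponent $(p-1)/2$.

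The step I expect to be the main obstacle is the interaction between low values of $p$ and the curvature contribution. When $p$ is close to $1$, the pointwise decay of $\phi$ extractable from $\mathcal{E}_{0,2}$ alone is too weak to control the cubic $F\cdot J$ bulk term by brute-force absolute estimates, and a naive bound would prevent the Gronwall loop from closing. It is precisely the antisymmetry encoded in $\varepsilon_{\mu\nu\gamma}$ and the bilinear trace inequality on the outgoing null cones that beat this threshold, so the central technical task is to make this null-structure cancellation quantitative in a form compatible with the weighted conformal energy. This is the point at which the analysis genuinely departs from the $A=0$ case of \cite{yang:NLW:2D}.
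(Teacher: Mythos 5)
There are two substantive problems with your plan. The first is a misdiagnosis of the curvature term: you treat $X^\nu F_{\gamma\nu}J^\gamma[\phi]$ as a trilinear null form that must be \emph{estimated}, proposing the Klainerman--Rodnianski bilinear trace theorem as the tool. But once you substitute the Chern--Simons constraint $F_{\gamma\nu}=\varepsilon_{\gamma\nu\mu}J^\mu[\phi]$, this term becomes $X^\nu\varepsilon_{\gamma\nu\mu}J^\mu J^\gamma$, which is the contraction of a tensor antisymmetric in $(\gamma,\mu)$ with one symmetric in $(\gamma,\mu)$ and therefore vanishes \emph{identically, pointwise}. No trace theorem, no null-frame decomposition, and no absorption into the conformal energy are needed; the whole point of the paper's observation is that every multiplier admissible for the flat equation is admissible verbatim here. (The bilinear trace theorem does enter the paper, but only in the last section for the improved pointwise decay, on null cones in the lifted $3+1$-dimensional problem — it is not the right tool for a bulk spacetime integral in $\R^{1+2}$, and in any case there is nothing to estimate.)

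The second and more serious gap is the subconformal case $1<p<5$. The Glassey--Pecher/Huh Gronwall argument you propose yields only $\int|\phi|^{p+1}\,dx\lesssim\mathcal{E}_{0,2}(1+t)^{3-p}$, which is strictly weaker than the claimed rate $(1+t)^{-\frac{p-1}{2}}$ for every $p<5$ (for $p=3$ it gives no decay at all, versus the claimed $(1+t)^{-1}$); the paper states this explicitly in Remark 1 as the previously known, weaker result. To reach $(1+t)^{-\frac{p-1}{2}}$ one needs the non-spherically-symmetric multipliers of \cite{yang:NLW:2D}: first $X=(x_2^2+(t-x_1)^2+1)\pa_t+(x_2^2-(t-x_1)^2)\pa_1+2(t-x_1)x_2\pa_2$ in the exterior region $\{t\leq x_1\}$ to obtain a weighted flux through the null hyperplane $\{t=x_1\}$, then $X=u_1^{q}\Lu_1+u_1^{q-2}x_2^2\L_1+2u_1^{q-1}x_2\pa_2$ with $q=\frac{p-1}{2}$ and $u_1=t-x_1+1$ in the interior, followed by the symmetries $x_1\mapsto-x_1$ and $x_1\leftrightarrow x_2$. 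Your superconformal branch ($p\geq5$ via the conformal Killing field, giving $(1+t)^{-2}$) does match the paper, but as written the proposal does not prove the stated decay rate on the subconformal range.
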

\begin{Remark}
A weaker decay estimate 
\begin{align*}
    \int_{\mathbb{R}^2}|\phi(t, x)|^{p+1} dx\leq C\mathcal{E}_{0, 2}(1+t)^{3-p},\quad p<5
    ,\quad \forall t\geq 0.
  \end{align*}
  had been obtained by Huh in \cite{Huh17:CS:conformal}, based on the method developed by Pecher in  \cite{Pecher82:NLW:2d} for the study of time decay for the solution with vanishing connection field $A$.  
\end{Remark}
The above time decay of the potential energy together with a type of  logarithmic Sobolev embedding is sufficient to conclude the pointwise decay estimates for the solution.
\begin{thm}
\label{thm:main:pd:gen}
If the initial data $(\phi_0, \phi_1)$ for the Chern-Simons-Higgs system \eqref{eq:CSH:p:2d} are bounded in $\mathcal{E}_{0, 2}$ and $\mathcal{E}_{1, 0}$, then the solution $\phi$ satisfies the rough pointwise decay estimates for all $1<p\leq 4$
\begin{equation*}
|\phi(t, x)|\leq C   (1+t)^{-\frac{p-1}{8}   } \sqrt{\ln(2+t)},\quad \forall t\geq 0
\end{equation*}
with some constant $C$ depending on  $p$, $\mathcal{E}_{0, 2}$ and $\mathcal{E}_{1, 0}$.
 
 \end{thm}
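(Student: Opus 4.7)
The plan is to combine the potential energy decay of Theorem \ref{thm:main:td} with a uniform-in-time bound on a higher order gauge-invariant energy of $\phi$ and a two-dimensional logarithmic Sobolev embedding applied to the modulus $v=|\phi|$. First, since $1<p\leq 4$ gives $(p-1)/2\leq 3/2<2$, Theorem \ref{thm:main:td} simplifies to $\int_{\R^2}|\phi(t,x)|^{p+1}\,dx \leq C\,\mathcal{E}_{0,2}\,(1+t)^{-(p-1)/2}$, which is the only $t$-dependent input.

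Next, one propagates the bound on $\mathcal{E}_{1,0}$ so as to obtain uniform-in-time control of a gauge-invariant $H^2$-type energy, essentially $\|\bar D\phi(t)\|_{L^2}^2+\|\bar D^2\phi(t)\|_{L^2}^2+\|D_0\phi(t)\|_{L^2}^2+\|D_0\bar D\phi(t)\|_{L^2}^2$. This is achieved by commuting the covariant wave equation $\Box_A\phi=|\phi|^{p-1}\phi$ with $D_j$ and running a standard covariant energy estimate. The commutator $[\Box_A,D_j]\phi$ produces curvature-derivative terms $F_{\mu j}D^\mu\phi$ and a $\phi\cdot \nabla F$ term; the Chern--Simons constraint $F_{\mu\nu}=\varepsilon_{\mu\nu\gamma}J^\gamma[\phi]$ expresses $F$ algebraically in terms of $\phi$ and $\bar D\phi$, and so turns these into cubic or higher-order expressions in $\phi,\bar D\phi$. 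Together with the differentiated nonlinearity $|\phi|^{p-1}|\bar D\phi|$, all these errors are absorbed using the rough $L^{p+1}$ decay from the first step, Sobolev embedding, and a bootstrap on $\|\phi(t)\|_{L^\infty}$.

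Finally, the diamagnetic inequality $|\nabla|\phi||\leq |\bar D\phi|$ and its iterate imply that $v=|\phi|$ satisfies $\|v\|_{L^{p+1}(\R^2)}\lesssim (1+t)^{-(p-1)/(2(p+1))}$ and $\|v\|_{H^2(\R^2)}\leq C$ uniformly in $t$. A two-dimensional logarithmic Sobolev inequality of the form
\[
\|v\|_{L^\infty(\R^2)}^2 \leq C\,\|v\|_{L^{p+1}(\R^2)}^{(p+1)/2}\,\log\!\left(e+\frac{\|v\|_{H^2(\R^2)}}{\|v\|_{L^{p+1}(\R^2)}}\right),
\]
derivable for example by applying a Brezis--Gallouet-type estimate to the auxiliary function $v^{(p+1)/2}$ and tracking exponents, then gives $\|\phi(t)\|_{L^\infty}^2\lesssim (1+t)^{-(p-1)/4}\ln(2+t)$, which is the claim of Theorem \ref{thm:main:pd:gen} after taking square roots.

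The main obstacle is the second step: propagating the higher order gauge-invariant energy when $p-1$ is allowed to be arbitrarily small. There the $L^{p+1}$ decay is weak and the commutator-generated cubic-in-$\phi$ errors in the energy identity barely close. The algebraic Chern--Simons constraint $F=\ast J[\phi]$, together with the geometric bilinear null-trace estimates alluded to in the abstract, are what make this bootstrap viable across the full range $1<p\leq 4$.
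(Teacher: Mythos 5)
Your overall architecture (potential energy decay $+$ higher-order energy control $+$ logarithmic Sobolev embedding) matches the paper's, but two of your three steps contain genuine gaps, and the second one is fatal as written.

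First, the uniform-in-time gauge-invariant $H^2$ bound is not attainable by the route you describe, and in fact the paper does not prove it: it only establishes \emph{polynomial growth} $\|DD\phi\|_{L^2}\lesssim (1+t)^{5}$ (Proposition \ref{prop:PD:H2}), which suffices because this quantity only enters through a logarithm. The obstruction is exactly the commutator term you mention: after substituting the Chern--Simons constraint, $[\Box_A,D_\mu]\phi$ produces a term of size $|\phi|\,|D\phi|^2$, and Gagliardo--Nirenberg in $\R^2$ gives $\||\phi||D\phi|^2\|_{L^2}\lesssim \|\phi\|_{L^\infty}\|D\phi\|_{L^2}\|DD\phi\|_{L^2}$, so the energy inequality reads $\frac{d}{dt}\|DD\phi\|_{L^2}\lesssim \|\phi\|_{L^\infty}\|DD\phi\|_{L^2}$. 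Your bootstrap on $\|\phi\|_{L^\infty}$ cannot close this: the decay you are trying to prove, $(1+t)^{-(p-1)/8}$ with $p\leq 4$, is at best $(1+t)^{-3/8}$, which is not integrable in time, so Gr\"onwall yields (at best) superpolynomial growth. This is precisely why the paper abandons Sobolev embedding here and instead works in the Coulomb gauge, rewrites the system as a first-order hyperbolic system for $D\phi$, and proves a Strichartz/dispersive estimate $\|D\phi\|_{L^4([T,T+1];\dot B^{-s}_{\infty,2})}\lesssim 1$ via Riesz transforms and the half-wave propagator (Lemmas \ref{lem:Strichartz1}, \ref{lem:Strichartz}); the bilinear null-trace theorem you invoke is used only in the later, large-$p$ argument of Theorem \ref{thm:main:pd:imp} and plays no role here.

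Second, your proposed logarithmic Sobolev inequality $\|v\|_{L^\infty}^2\leq C\|v\|_{L^{p+1}}^{(p+1)/2}\log(e+\|v\|_{H^2}/\|v\|_{L^{p+1}})$ is false: a scaling test $v_\lambda(x)=v(\lambda x)$ shows that any bound of $\|v\|_{L^\infty}$ by $\|v\|_{L^{p+1}}$ and $\|v\|_{H^2}$ in $\R^2$ must carry a positive power $\theta=\tfrac{2}{p+3}$ of the $H^2$ norm, not merely a logarithm. The logarithmic gain in Br\'ezis--Gallouet is tied to having the full $H^1$ norm (i.e.\ the gradient) as the leading factor, and applying it to $v^{(p+1)/2}$ produces the uncontrolled term $\|v^{(p-1)/2}\nabla v\|_{L^2}$ in that leading factor. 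The paper supplies the missing decaying $H^1$-type input through the conformal multiplier: Proposition \ref{prop:phi:L2:1} gives $\|(1+|t-r|)\bar D\phi\|_{L^2}^2+(1+t)^2\|\D\phi\|_{L^2}^2\lesssim (1+t)^{(5-p)/2}$, and the $L^\infty$ bound is then obtained by applying Lemma \ref{lem:log:Sobolev} to the rescaled modulus $|\phi(t,tx)|^2$ separately in the regions $|x|\leq t/2$, $|x|\geq 3t/2$, and near the light cone (where the angular-derivative bound is what saves the argument). Without this weighted first-order energy your scheme cannot reach the rate $(1+t)^{-(p-1)/8}$ even granting a uniform $H^2$ bound.
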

The above rough decay estimate is far from optimal. However the proof also implies that the solution decays faster away from the light cone. Moreover for sufficiently large $p$, we can show that the solution decays as fast as linear waves in $\mathbb{R}^{1+2}$.  
\begin{thm}
\label{thm:main:pd:imp}
Assume that the initial data $(\phi_0, \phi_1)$  to the system  \eqref{eq:CSH:p:2d} are supported in $\{|x|\leq 1\}$. Then for all $p>4$, the solution $\phi$  verifies the sharp time decay  
 \begin{equation*}
|\phi(t, x)|\leq C (1+t+|x|)^{-\frac{1}{2}},\quad \forall t\geq 0
\end{equation*}
for some constant $C$ depending only on $p$, $\mathcal{E}_{0, 2}$ and $\mathcal{E}_{1, 0}$.
 
 \end{thm}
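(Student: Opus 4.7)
The plan is a gauge-covariant vector field argument assisted by the conformal compactification of $\mathbb{R}^{1+2}$ and the Klainerman-Rodnianski bilinear trace theorem for null hypersurfaces. Because the data are supported in $\{|x|\leq 1\}$, finite speed of propagation confines $\phi(t,\cdot)$ to $\{|x|\leq 1+t\}$, so it suffices to derive the sharp decay in this region; as a consequence any factor $(1+|t-|x||)^{-1/2}$ appearing in a 2D Klainerman-Sobolev bound will be harmlessly bounded.

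First I would upgrade the control on the first-order conformal energy. Theorem \ref{thm:main:td} gives $\int_{\mathbb{R}^2}|\phi(t)|^{p+1}dx\lesssim (1+t)^{-\min\{(p-1)/2,\,2\}}$, which for $p>4$ is at least $(1+t)^{-3/2}$, so that $t|\phi|^{p+1}$ is integrable in $t$. Feeding this estimate into the conformal energy identity generated by the multiplier $K_0=(t^2+|x|^2)\pa_t+2t x^i\pa_i$ controls the nonlinear bulk source and keeps the conformal energy bounded uniformly in time. Working simultaneously in the conformal compactification, which maps the forward domain to a bounded region of a cylinder, recasts the problem as a uniform estimate up to null infinity, and uniform bounds there transfer back to the sharp $(1+t+|x|)^{-1/2}$ decay of the scalar.

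Next I would commute \eqref{eq:CSH:p:2d} with the conformal Killing fields $\Gamma\in\{\pa_\mu,\Om_{ij},L_i,S\}$, acting on $\phi$ through the gauge-covariant Lie derivative so as to almost commute with $\Box_A$. Each commutator produces source terms of the schematic form $F\c D\phi$ or $(\nabla F)\c\phi$; because the constraint $F_{\mu\nu}=\varep_{\mu\nu\ga}\Im(\phi\,\overline{D^\ga\phi})$ forces $F$ to be a genuine null form in $\phi$ and $D\phi$, the geometric bilinear trace theorem applied on outgoing null hypersurfaces yields $L^2$-control of these cross terms without derivative loss. After commuting up to second order, the 2D Klainerman-Sobolev inequality
\begin{align*}
|\phi(t,x)|^2 \lesssim \frac{1}{(1+t+|x|)(1+|t-|x||)}\sum_{|a|\leq 2}\|\Gamma^a\phi(t)\|_{L^2}^2
\end{align*}
translates the bounded second-order energy into the claimed pointwise estimate.

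The main obstacle will be closing the higher-order energy estimates after commutation. The appearance of $F$ and $\nabla F$ in the commutators couples $\phi$ back to itself quadratically, producing borderline-critical sources. The integrability of $\int\!\!\int t^k|\phi|^{p+1}dx\,dt$ for $k=0,1,2$ requires precisely the rate provided by Theorem \ref{thm:main:td}, and the threshold $p>4$ reflects the onset of integrable decay at the highest conformal weight. The null structure of $J^\ga[\phi]$ is essential in the bilinear trace step, as otherwise the commutator sources would not integrate against the weights used in the vector field identities.
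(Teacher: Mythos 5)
There is a genuine gap, and it sits at the very first step of your plan. For $4<p\le 5$ the potential energy decays like $(1+t)^{-\frac{p-1}{2}}$ (Theorem \ref{thm:main:td}), so the bulk term in the conformal energy identity contributes
\begin{equation*}
\int_0^T\!\!\int_{\R^2} t\,|\phi|^{p+1}\,dx\,dt \lesssim \int_0^T t\,(1+t)^{-\frac{p-1}{2}}\,dt \sim T^{\frac{5-p}{2}},
\end{equation*}
which diverges for every $p<5$; your claim that $t|\phi|^{p+1}$ is integrable in time for $p>4$ is false (it requires $p>5$). Consequently the conformal energy is \emph{not} uniformly bounded in the range $4<p<5$ — this is exactly the computation in \eqref{eq:Q:bd}, which yields growth $(1+t)^{\frac{5-p}{2}}$ — and the Klainerman–Sobolev route collapses at its foundation. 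The situation is worse at second order: the commutators with $\Box_A$ produce the cubic term $|\phi||D\phi|^2$, and the paper needs a Coulomb-gauge Strichartz estimate (Lemma \ref{lem:Strichartz}) just to show that the \emph{unweighted} second-order energy grows no faster than $(1+t)^5$ (Proposition \ref{prop:PD:H2}). A uniform bound on $\sum_{|a|\le 2}\|\Gamma^a\phi\|_{L^2}$ including the boosts is far out of reach by the means you sketch, and the bilinear trace theorem — a statement about traces on null hypersurfaces — does not by itself convert these commutator sources into admissible $L^1_tL^2_x$ errors for an energy estimate on constant-time slices.

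For comparison, the paper's actual argument avoids commuted energy estimates entirely. It performs a conformal inversion sending the interior of a hyperboloid to the compact backward cone $\{\tilde t+|\tilde x|\le 1\}$, where the equation becomes $\Box_{\tilde A}\tilde\phi=\Lambda^{\frac{5-p}{2}}|\tilde\phi|^{p-1}\tilde\phi$, lifts the problem to $\R^{1+3}$, and bounds $\tilde\phi$ pointwise via the Kirchoff–Sobolev parametrix \eqref{eq:rep4phi} together with a Gronwall argument for the quantity $\mathcal M(t)$. The bilinear trace theorem enters only to control the single term $\int_0^{\tilde r}s\,\Im(\phi\cdot\overline{D_{\tilde\Lb}\phi})\,ds$ on backward light cones, where no energy estimate for the Maxwell field is available. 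The threshold $p>4$ emerges from the exponent bookkeeping in \eqref{eq:spt:pterm} and the surrounding estimates, not from integrability of $t^k|\phi|^{p+1}$.
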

 \begin{Remark} 
 Compact support assumption on the initial data is merely for simplicity. Same result holds for data bounded in some weighted energy space. For general data, inspired by the work \cite{YangYu:MKG:smooth}, one can first study the solution in the exterior region $\{|x|\geq t+R\}$. By choosing $R$ sufficiently large, the long time behavior of the solution in this region is then reduced to the classical small data problem for nonlinear wave equations verifying the null condition (\cite{Alinhac01:null:2D:1}, \cite{Alinhac01:null:2D:2}). The proof for the above result mainly concentrates on studying the solution in the interior region with arbitrarily large data.   
\end{Remark}

\begin{Remark}
For the flat case when $A=0$, it has been shown by the authors in \cite{yang:NLW:2D} that sharp time decay for the solution holds for $p>\frac{11}{3}$. We believe that our method also work for this larger range of $p$. However, the proof will be more involved. We will discuss this with more details during the proof. 
\end{Remark}

The proof for the potential energy decay, that is the main theorem \ref{thm:main:td}, is similar to that in \cite{yang:NLW:2D} by the authors for the defocusing semilinear wave equations. For  the Chern-Simons-Higgs system studied here, the key observation is that the error term 
\[
X^\nu F_{\mu\nu}J[\phi]^{\mu}=X^\nu \varepsilon_{\mu\nu \gamma}J[\phi]^{\gamma}J[\phi]^{\mu}
\]
arising from the connection field $A$ for any multiplier $X$ actually vanishes as $\varepsilon_{\mu\nu\gamma}$ is skew symmetric. This in particular indicates that any multiplier that can be used for the flat case (with vanishing connection field $A$) also works for  the general Chern-Simons-Higgs equation. For readers' interest, we briefly review the ideas for the proof. 

Instead of using the conformal Killing vector field, which is of particular importance for the superconformal case (see discussions for example in \cite{vonWahl72:decay:NLW:super}, \cite{Velo87:decay:NLW}, \cite{Pecher82:decay:3d}, \cite{Pecher82:NLW:2d}, \cite{Huh17:CS:conformal}), non-spherical symmetric vector field 
\begin{align*}
X= (x_2^2+(t-x_1)^2+1)\partial_t+(x_2^2-(t-x_1)^2)\partial_1+2(t-x_1)x_2\partial_2
\end{align*}
will be applied to the exterior region  $\{t\leq x_1\}$. This shows the weighted energy estimate through the null hyperplane $\{t=x_1\}$ 
\begin{equation*}
  \int_{t=x_1\geq 0}x_2^2|(D_t+D_1)\phi|^2(t,t,x_2) +|D_2\phi(t, t, x_2)|^2+\frac{2}{p+1}|\phi(t, t, x_2)|^{p+1} dtdx_2
\leq  C\mathcal{E}_{0, 2}.
\end{equation*}
Then in the interior region, take the multiplier  
 \begin{align*}
X=u_1^{\frac{p-1}{2}}(\partial_t-\partial_1)+u_1^{\frac{p-1}{2}-2}x_2^2(\partial_t+\partial_1)+2u_1^{\frac{p-1}{2}-1}x_2\partial_2,\quad u_1=t-x_1+1
\end{align*}
and the region $\{t\geq x_1\}$. By using the above  weighted energy bound on the null hyperplane $\{t=x_1\}$ (as boundary of the chosen region), one then can obtain that  
\begin{align*}
\int_{x_1\leq t} u_1^{\frac{p-5}{2}}(x_2^2+u_1^2)|\phi(t, x)|^{p+1} dx \leq C\mathcal{E}_{0, 2}.
\end{align*}
 Since we only study the solution in the future $t\geq 0$, restricting the integral to the half plane $\{x_1\leq 0\}$, the above estimate then implies that 
\begin{align*}
\int_{x_1\leq 0} (1+t)^{\frac{p-1}{2}} |\phi(t, x)|^{p+1} dx\leq  C\mathcal{E}_{0, 2}
\end{align*}
since $u_1=t-x_1+1$. The potential energy decay then follows by symmetry $x_1$ to $-x_1$. 

\bigskip

For the rough pointwise decay estimate for the solution,  we rely on the logarithmic Sobolev embedding as in the flat case. However, the new difficulty for the Chern-Simons-Higgs equation is the polynomial growth bound for the first order energy, that is the $H^2$ bound for the solution. To control the first order energy, one needs to commute the equation with derivatives. A typical nonlinear term arising from the connection field $A$ is of the form 
$$F_{\nu \mu}D^\nu \phi =\varepsilon_{\nu\mu \gamma} D^\nu \phi \Im(\phi\cdot \overline{D^{\gamma}\phi}) \approx |\phi| |D\phi|^2,$$
which seems not possible to be controlled by using the standard Sobolev embedding. The idea is to rely on a type of Strichartz estimate for linear half wave equation. See Lemme \ref{lem:Strichartz1} for more details. Now to obtain necessary dispersive estimate for $D\phi$, we can write the Chern-Simons-Higgs equation as first order hyperbolic system for $D\phi$ under Coulomb gauge condition. We emphasize here that the gauge condition is only used at this point.  By using Riesz transform together with the potential energy and weighted energy decay estimates for the solution, we can show that
  \begin{align*}
\|D\phi\|_{L^{4}([T,T+1];\dot{B}_{\infty,2}^{-s})}\les 1,\quad \forall T\geq 0. 
\end{align*}
This dispersive estimate is sufficient to control the above nonlinearity. Indeed by using Gagliardo-Nirenberg interpolation for Besov space and the energy conservation, we can bound that 
\begin{align*}
 \||D\phi|^2 |\phi |\|_{L^2} &   \les   \|D\phi\|_{\dot{B}_{\infty,2}^{-s}}^{ \frac{2+2\epsilon}{2+\epsilon} } \|DD\phi\|_{L^2}^{ \frac{2 s(1+\epsilon )}{2+\epsilon}  }.
\end{align*}
Here $\epsilon$ is a small positive constant and $s$ is nonnegative such that  $ s(1+\epsilon)<1$. By doing the standard energy estimate for $D\phi$, we then can show that the first order energy grows at most polynomially in time
\begin{align*}
\|DD\phi\|_{L^2}\les 1+ (1+t)^{\frac{2+\epsilon}{2+\epsilon-2s(1+\epsilon) }}.
\end{align*}

Finally to improve the pointwise decay estimate for the solution for larger power of $p$, we rely on the conformal symmetry of the system 
\[
\phi \mapsto (1+t+|x|)^{\frac{1}{2}}(1+t-|x|)^{\frac{1}{2}} \phi, \quad A\mapsto A 
\]
and the proof for the main theorem \ref{thm:main:pd:imp} is reduced to showing that solution to the system 
\begin{equation*}
 \Box_{A} \phi =\Lambda^{\frac{5-p}{2}} | \phi|^{p-1} \phi, \quad  F_{\mu\nu}=\epsilon_{\mu\nu\ga} J[ \phi]^{\ga},\quad \Lambda=(1-t+|x|)(1-t-|x|)
 \end{equation*} 
 on the backward cone $B=\{t+|(x_1, x_2)|\leq 1\}$ with compactly supported initial data is uniformly bounded up to the tip point. However to better make use of the geometric Kirchoff-Sobolev paramatrix, we lift the above equation to space dimension three by simply assuming that $A$ and $\phi$ is independent of the third variable $x_3$. The cone $B$ will be changed to $\{t+|(x_1, x_2, x_3)|\leq 1\} $ accordingly. For any fixed point $q=(t_0, x_0)$, the solution $\phi$ at $q$ can be represented as follows
\begin{equation*}
\begin{split}
4\pi|\phi(q) | &=I.D. +\iint_{\mathcal{N}^{-}(q)}\langle\widetilde{\lap}_A h-i\widetilde{\rho}  h, \phi\rangle\widetilde{r} d\widetilde{r}d\widetilde{\om}  -\iint_{\mathcal{N}^{-}(q)}\langle  h, \Box_A\phi\rangle\widetilde{r} d\widetilde{r}d\widetilde{\om},
\end{split}
\end{equation*}
where  $h$ is  the function on the backward light cone $\mathcal{N}^{-}(q)=\{t-t_0+|x-x_0|=0\}$ such that 
\begin{align*}
D_{\tilde{\Lb}} h=0, \quad h(q)= |\phi(q)|^{-1}\phi(q).
\end{align*}
The tilde components are those associated to the coordinates centered at the point $q$. 
The first term relies only on the initial data which is clearly uniformly bounded. The third term is the pure power nonlinearity which could be controlled in a similar way as the flat case in \cite{yang:NLW:2D}. The most difficult term is the second one arising from the connection field $A$. 
Like the Yang-Mills-Higgs system studied in \cite{YangYu:MKGrough}, commuting the transport equation for $h$ with angular derivatives, one first can obtain the bound
\begin{align*}
|\tilde{r}^2(\lap_A h-i\tilde{\rho} h)| 
&\leq |\int_0^{\tilde{r}}  \tilde{s}^2 \pa^\mu F(\tilde{\Lb}, \pa_{\mu}) ds|+  \big( \int_0^{\tilde{r}} F(\tilde{\Lb},  \tilde{\Omega}_{jk}) ds\big)^2.
\end{align*}
Now the first term is easier since 
\begin{align*}
|\pa^\mu F(\tilde{\Lb}, \pa_\mu )|\les  |\phi|^6+ |D_{\tilde{\Lb}}\phi|^2+|\tilde{\D}\phi|^2,
\end{align*}
which after integration over the unit sphere can be bounded by using the standard energy conservation (recall that we have lifted the problem to space dimension three) through the backward light cone $\mathcal{N}^{-}(q)$. The second term involves the Maxwell field  $F$, for which the energy estimate is absent for solution to the Chern-Simons-Higgs equation. This is in vast  contrast to the Maxwell-Klein-Gordon system \cite{YangYu:MKG:smooth} for which the energy  controls both the scalar field and the Maxwell field. Now we first  bound that 
\begin{align*}
\big( \int_0^{\tilde{r}} F(\tilde{\Lb},  \tilde{\Omega}_{jk}) ds\big)^2\les  \big( \int_0^{\tilde{r}}    \tilde{s} \Im(\phi\cdot \overline{D_{\tilde{\Lb}}\phi})  ds\big)^2   + \big(\int_0^{\tilde{r}} |\tilde{x}_3| (|D_{\tilde{\Lb}}\phi|+|\tilde{\D}\phi|)|\phi| ds \big)^2. 
\end{align*} 
Although the rough bound for the solution $\phi$ may blow up on the boundary, the key observation is that it can only go to infinity at $\tilde{x}_3=0$. This structure allows us to control the second term by using the rough pointwise estimate  for $\phi$ obtained in Thoerem \ref{thm:main:pd:gen}. 

It then remains to control the first term for which we make use of the geometric bilinear trace theorem developed by Klainerman-Rodnianski in \cite{Klainerman06:trace} (see Theorem \ref{them:Gbilinear} in the last section). Roughly speaking, the backward light cone $\mathcal{N}^{-}(q)$ is null hypersurface which can be written as $[0, \tilde{r}]\times \mathbb{S}^2$. Note that $\tilde{\Lb}$ is the tangential derivative. Then the bilinear trace theorem states that 
\[
 \int_{\mathbb{S}^2}\left(\int_0^{1} f\cdot D_{\tilde{\Lb}} g ds \right)^2 d\tilde{\omega} \les \mathcal{N}_1(f) \mathcal{N}_1(g),
\]
in which  $ \mathcal{N}_1(f)=\int_0^1 \int_{\mathbb{S}^2} |D_{\tilde{\Lb}}f|^2+|\D f|^2 dsd\omega$. This is closely related to the energy flux through the null hypersurface. Then  a simple scaling argument could lead to the desired bound.

\bigskip

The paper is organized as follows: In section 2, we apply the new vector field introduced by the authors in \cite{yang:NLW:2D} to obtain time decay for the potential energy. Then in section 3, we show rough pointwise decay estimate for the solution. The main new difficulty is to establish the polynomial growth of the second order energy, which we rely on Strichartz estimate. The last section is devoted to the improvement of pointwise decay estimate for the solution for larger power of $p$. This is realized by using conformal transformation and geometric Kirchoff-Sobolev paramatrix for geometric wave equations.  

\textbf{Acknowledgments.} D. Wei is partially supported by the National Key R\&D Program of China  2021YFA1001500. S. Yang is supported by the National Key R\&D Program of China 2021YFA1001700 and the National Science Foundation of China 12171011,  12141102.


\section{Potential energy decay}
As in the flat case when the connection field $A$ vanishes, the starting point of the decay mechanism of the solution to this nonlinear system is the time decay of the potential energy. 
\begin{Prop}
  \label{prop:td:2dCSH}
  For solution $\phi$ to the nonlinear equation \eqref{eq:CSH:p:2d} in $\mathbb{R}^{1+2}$, we have the time decay of the  potential energy
  \begin{align*}
    \int_{\mathbb{R}^2}|\phi(t, x)|^{p+1}(1+t+|x|)^{\min\{\frac{p-1}{2}, 2\}}dx\leq C\mathcal{E}_{0, 2}
    ,\quad \forall t\geq 0
  \end{align*}
  for some constant $C$ depending only on $p$.

\begin{remark}
Same time decay for the superconformal case when $p\geq 5$ and a weaker decay estimate for the case when $3\leq p \leq 5$ had been shown by Huh in \cite{Huh17:CS:conformal}. The method relies on the conformal energy identity used for the flat case by Pecher in \cite{Pecher82:NLW:2d}. 
\end{remark}

\end{Prop}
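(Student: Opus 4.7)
The plan is to follow the multiplier strategy outlined in the introduction, dividing the analysis into an exterior region $\{t\le x_1\}$ and an interior region $\{t\ge x_1\}$, and then finishing by a symmetry argument. The starting point is the gauge-invariant energy-momentum tensor
\begin{equation*}
T_{\mu\nu}[\phi]=\Re(D_\mu\phi\cdot\overline{D_\nu\phi})-\tfrac{1}{2}m_{\mu\nu}\bigl(D^\alpha\phi\cdot\overline{D_\alpha\phi}+\tfrac{2}{p+1}|\phi|^{p+1}\bigr),
\end{equation*}
whose divergence, using the Chern–Simons–Higgs system, is
\begin{equation*}
\nabla^\mu T_{\mu\nu}[\phi]=F_{\nu\mu}J^\mu[\phi].
\end{equation*}
The crucial algebraic observation is that for any vector field $X$, the error term coming from the connection satisfies
\begin{equation*}
X^\nu F_{\nu\mu}J^\mu[\phi]=\varepsilon_{\nu\mu\gamma}X^\nu J^\gamma[\phi]J^\mu[\phi]=0
\end{equation*}
by the antisymmetry of $\varepsilon$. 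Hence any multiplier identity that works for the flat defocusing wave equation ($A=0$) carries over verbatim to the Chern–Simons–Higgs system modulo the standard deformation tensor bulk terms.

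The first step is to use the vector field
$X_{\mathrm{ext}}=(x_2^2+(t-x_1)^2+1)\partial_t+(x_2^2-(t-x_1)^2)\partial_1+2(t-x_1)x_2\partial_2$
in the region $\{0\le t\le x_1\}$. A direct computation shows that the deformation tensor of $X_{\mathrm{ext}}$ is nonnegative on test fields in this region, so integrating the identity $\nabla^\mu(T_{\mu\nu}X^\nu)=T_{\mu\nu}{}^{(X)}\pi^{\mu\nu}$ over the domain bounded by $\{t=0\}$, $\{t=x_1\}$ and spatial infinity yields, after estimating the boundary contribution at $t=0$ by the conformal energy $\mathcal{E}_{0,2}$, the flux bound
\begin{equation*}
\int_{t=x_1\ge 0}\!\!\bigl(x_2^2|(D_t+D_1)\phi|^2+|D_2\phi|^2+\tfrac{2}{p+1}|\phi|^{p+1}\bigr)\,dt\,dx_2\le C\mathcal{E}_{0,2}.
\end{equation*}

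The second step is to apply the interior multiplier
$X_{\mathrm{int}}=u_1^{(p-1)/2}(\partial_t-\partial_1)+u_1^{(p-1)/2-2}x_2^2(\partial_t+\partial_1)+2u_1^{(p-1)/2-1}x_2\partial_2$
with $u_1=t-x_1+1$ in the region $\{x_1\le t\}$ cut off by the slice $\{t=T\}$. One verifies by direct computation that the bulk term $T_{\mu\nu}\,{}^{(X_{\mathrm{int}})}\pi^{\mu\nu}$, after adding a suitable zeroth-order correction (a modified current of the type used in \cite{yang:NLW:2D}), becomes either nonnegative or dominated by a coercive weighted integrand involving $|\phi|^{p+1}$. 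The boundary on $\{t=x_1\}$ is already controlled by Step 1, and the data boundary on $\{t=0\}$ by $\mathcal{E}_{0,2}$. Discarding the nonnegative contribution on $\{t=T\}$ then yields
\begin{equation*}
\int_{x_1\le T}u_1^{(p-5)/2}(x_2^2+u_1^2)|\phi(T,x)|^{p+1}\,dx\le C\mathcal{E}_{0,2}.
\end{equation*}
Restricting to the half-plane $\{x_1\le 0\}$, where $u_1\ge 1+T$, this reduces to $\int_{x_1\le 0}(1+T)^{\min\{(p-1)/2,2\}}|\phi|^{p+1}dx\lesssim\mathcal{E}_{0,2}$ (the $\min$ arises because when $p\ge 5$ one keeps the $u_1^2$ factor instead of $u_1^{(p-1)/2}$ to avoid a sign change of the weights).

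The final step is to observe that the equation is invariant under $x_1\mapsto -x_1$, so repeating the argument with the reflected multiplier gives the same bound on $\{x_1\ge 0\}$. Adding both yields the stated estimate. The main obstacle I anticipate is the sign/coercivity analysis of the bulk term $T_{\mu\nu}\,{}^{(X_{\mathrm{int}})}\pi^{\mu\nu}$ for the interior multiplier: the deformation tensor is not manifestly nonnegative, and one must carefully combine it with a zeroth-order modification to absorb the borderline terms, as well as treat the case $p\ge 5$ separately to match the stated $\min\{(p-1)/2,2\}$ weight. The connection error vanishes identically, so the only real work beyond the flat case is the careful choice and bookkeeping of weights in this bulk computation.
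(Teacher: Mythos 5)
Your plan for the subconformal range $1<p\le 5$ matches the paper's proof: the same cancellation $X^\nu F_{\nu\mu}J^\mu[\phi]=0$ from the antisymmetry of $\varepsilon$, the same exterior multiplier through the null hyperplane $\{t=x_1\}$, the same interior multiplier built from $u_1=t-x_1+1$ with the Lagrangian correction $\chi=u_1^{q-1}$, and the same reflection $x_1\mapsto -x_1$ at the end. Two points of bookkeeping: (i) to produce the full weight $(1+t+|x|)^{(p-1)/2}$ rather than just $(1+t)^{(p-1)/2}$ you also need the $x_1\leftrightarrow x_2$ symmetry to combine the $u_1^{q-2}x_2^2$ and $u_1^{q-2}x_1^2$ terms in the region $\{r\ge t\}$; the ingredients are in your interior bound but the final assembly as written only invokes $x_1\mapsto-x_1$. (ii) The paper actually splits your $X_{\mathrm{ext}}$ into $\partial_t$ (plain energy conservation in the exterior) plus the quadratic part with $\chi=t-x_1$; this is cosmetic.

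The genuine gap is your treatment of $p\ge 5$. You propose to "keep the $u_1^2$ factor instead of $u_1^{(p-1)/2}$", i.e.\ cap the exponent at $q=2$ in the interior multiplier. Even granting that the interior bulk term then stays nonnegative (the leftover potential term becomes $\frac{p-5}{p+1}u_1^{q-1}|\phi|^{p+1}\ge 0$), the scheme still requires the exterior flux bound through $\{t=x_1\}$ as input, and there the bulk term is $\frac{(5-p)(x_1-t)}{p+1}|\phi|^{p+1}$, which is \emph{negative} throughout $\{0\le t\le x_1\}$ once $p>5$. So the exterior step, and hence the whole two-region argument, breaks down for superconformal powers. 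The paper avoids this entirely by switching, for $p\ge 5$, to the conformal Killing field $X=(t^2+r^2)\pa_t+2tr\pa_r$ with $\chi=t$: the bulk term is then $\frac{(p-5)t}{p+1}|\phi|^{p+1}\ge 0$ and the boundary term directly controls $\int_{\R^2}(t^2+r^2)|\phi|^{p+1}dx\les \mathcal{E}_{0,2}$, which is exactly the $\min\{\frac{p-1}{2},2\}=2$ case of the statement. You should treat the superconformal case by this separate (and in fact simpler) conformal-multiplier argument rather than by modifying the exponent in the interior multiplier.
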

The rest of this section is devoted to the proof of this proposition, which is similar to that in \cite{yang:NLW:2D}. Without loss of generality, we only study the solution in the future, that is, $t\geq 0$. For simplicity, in this section $A\les B$ stands for  $A\leq CB$ for some constant $C$ depending only on $p$.

The proof relies on the vector field method originally introduced by Klainerman in \cite{klinvar}, which 
turns out to be quite robust for treating such geometric nonlinear wave equations. 
Define the energy momentum tensor for the scalar field $\phi$
\begin{align*}
  T[\phi]_{\mu\nu}=\langle D_{\mu}\phi, D_{\nu}\phi \rangle-\f12 m_{\mu\nu}(\l D^\ga \phi,  D_\ga\phi\r+\frac{2}{p+1} |\phi|^{p+1}),
\end{align*}
where $\l a, b \r=\f12(a\bar{b}+\bar{a}b)$ means  the inner product for complex numbers $a$ and $b$. We can compute that 
\begin{align*}
\pa^\mu T[\phi]_{\mu\nu} = \l \Box_A\phi-|\phi|^{p-1}\phi,  D_\nu \phi \r  + \l D^\ga \phi, iF_{\ga\nu} \phi\r=\l \Box_A\phi-|\phi|^{p-1}\phi,  D_\nu \phi\r  - F_{\ga\nu} J^\ga[\phi].
\end{align*}
Here recall that $J_{\mu}[\phi]=\Im(\phi \cdot \overline{D_\mu\phi})$ is the current defined in the equation \eqref{eq:CSH:p:2d}. 
For any vector field $X$ and any function $\chi$ defined on $\mathbb{R}^{1+2}$, denote  the current
\begin{equation*}
P^{X,  \chi}_\mu[\phi]=T[\phi]_{\mu\nu}X^\nu -
\f12\pa_{\mu}\chi \cdot|\phi|^2 + \f12 \chi\pa_{\mu}|\phi|^2.
\end{equation*}
We can compute that 
\begin{align*}
\pa^\mu P^{X,  \chi}_\mu[\phi]= &T[\phi]^{\mu\nu}\pi^X_{\mu\nu}+\l \Box_A\phi-|\phi|^{p-1}\phi,  D_X \phi\r  - X^\nu F_{\ga\nu} J^\ga[\phi]-\f12 \Box\chi \cdot|\phi|^2\\
&+ \chi (\l \phi, \Box_A\phi\r+\l D_\mu\phi , D^\mu\phi \r ),
\end{align*}
in which $(\pi^X)^{\mu\nu}=\f12 (\pa^\mu X^\nu+\pa^\nu X^\mu )$  is the deformation tensor of the metric $m$ along the vector field $X$.
For any domain $\mathcal{D}$ in $\mathbb{R}^{1+2}$, by using Stokes' formula, we have the energy identity
\begin{align}
\label{eq:energy:id}
\int_{\pa\mathcal{D}}i_{  P^{X, \chi}[\phi]} d\vol  =\iint_{\mathcal{D}}  &T[\phi]^{\mu\nu}\pi^X_{\mu\nu}+\l \Box_A\phi-|\phi|^{p-1}\phi,  D_X \phi \r - X^\nu F_{\ga\nu} J^\ga[\phi] \\
\notag
&-\f12\Box\chi\cdot|\phi|^2+ \chi (\l \phi , \Box_A\phi\r+\l D_\mu\phi , D^\mu\phi \r )  d\vol,
\end{align}
where $i_Z d\vol$ is the contraction of the vector field $Z$ with the volume form $d\vol$ and $\Box$ is the wave operator corresponding to the flat metric $m$ in $\mathbb{R}^{1+2}$.

For solution to the Chern-Simons-Higgs equation \eqref{eq:CSH:p:2d}, we first note that  
\begin{align*}
&\l \Box_A\phi-|\phi|^{p-1}\phi,   D_X \phi \r  - X^\nu F_{\ga\nu} J^\ga[\phi]+ \chi (\l \phi, \Box_A\phi\r+\l D_\mu\phi,  D^\mu\phi\r  )\\
&=-X^\nu \varepsilon_{\ga\nu\mu}J^{\mu}[\phi] J^{\ga}[\phi]+\chi (|\phi|^{p+1} +\l D_\mu\phi,  D^\mu\phi\r) \\
&=\chi (|\phi|^{p+1} +\l D_\mu\phi ,  D^\mu\phi\r )
\end{align*}
as $\varepsilon_{\ga \nu \mu}$ is skew symmetric.

In space dimension two for the case when $1<p\leq 5$, instead of taking regions bounded by null cones, consider the region 
\begin{align*}
\mathcal{D}=\{(t, x_1, x_2)| 0\leq t\leq \min\{x_1, T\}\}
\end{align*}
bounded by the hyperplane $\{t=x_1\}$, the initial hypersurface $\{t=0\}$ and the constant $t$-slice $\{t=T\}$ for any positive constant $T$. In the exterior region, first take the vector field $X=\partial_t$ and the function $\chi=0$. Since $\pa_t$ is Killing, the deformation tensor vanishes $\pi^X=0$. In particular, the bulk term on the right hand side of the energy identity \eqref{eq:energy:id} vanishes and hence we derive the classical energy conservation. To compute the boundary integrals, define the null coordinates and frame
$$u_1=t-x_1+1, \quad v_1=t+x_1, \quad  \L_1=2\pa_{v_1}=\partial_t+\partial_1,\quad \Lu_1=\pa_t-\pa_{1}.$$
Recall the volume form
\[
d\vol=dtdx=dtdx_1dx_2= \f12 du_1 dv_1 dx_2. 
\]
On the null hyperplane $\{t-x_1=0\}$, we have
\begin{align*}
i_{P^{X, \chi}[\phi]}d\vol&=\f12 (P^{X, \chi}[\phi])^{u_1}dv_1 dx_2=   -(P^{X, \chi}[\phi])_{v_1} 2dt dx_2\\
&=-(\l D_{t}\phi, (D_{t}+D_{1})\phi\r-\f12(-|D_t\phi|^2+|D_1\phi|^2+|D_2\phi|^2+\frac{2}{p+1}|\phi|^{p+1}) )dtdx_2\\
&=-\f12(|D_{\L_1}\phi|^2+|D_2\phi|^2+\frac{2}{p+1}|\phi|^{p+1}) dtdx_2.
\end{align*}
On the constant $t$-slice, we compute that
\begin{align*}
  i_{P^{X,\chi}[\phi]}d\vol=&(P^{X, \chi}[\phi])^{0}dx= - T[\phi]_{0 0}  dx\\
=&-\f12(|D_t\phi|^2+|D_1\phi|^2+|D_2\phi|^2+\frac{2}{p+1}|\phi|^{p+1})   dx.
\end{align*}
We hence derive the classical energy conservation in the exterior region
\begin{align*}
  &\int_{0\leq t\leq T}(|D_{\L_1}\phi|^2+|D_2\phi|^2+\frac{2}{p+1}|\phi|^{p+1})(t,t,x_2)dtdx_2  +\int_{x_1\geq T}  (|D \phi|^2 +\frac{2}{p+1}|\phi|^{p+1})(T, x) dx \\
  &=\int_{x_1\geq 0} (  |D\phi|^2 +\frac{2}{p+1}|\phi|^{p+1})(0, x) dx.
\end{align*}
Here $D\phi$ is short for $(D_t\phi, D_1\phi, D_2\phi)$. 
By letting $T\to+\infty$, it then holds that 
\begin{align}
\label{eq:E2}
  \int_{t\geq 0}(|D_{\L_1}\phi|^2+|D_2\phi|^2+\frac{2}{p+1}|\phi|^{p+1})(t,t,x_2)dtdx_2 \leq   \mathcal{E}_{0, 0}.
\end{align}
Similarly take $X=\partial_t$, $\chi=0$ and $\mathcal{D}=[0,t]\times\R^2$ in the energy identity \eqref{eq:energy:id} we have
\begin{align}
\label{eq:E3}
  \int_{\R^2}(|D\phi|^2+\frac{2}{p+1}|\phi|^{p+1})(t,x)dx=\int_{\R^2}(|D\phi|^2+\frac{2}{p+1}|\phi|^{p+1})(0,x)dx \leq   \mathcal{E}_{0, 0}.
\end{align}
Next we choose the vector field $X$ and the function $\chi$ to be  
\begin{align*}
X= (x_2^2+(t-x_1)^2)\partial_t+(x_2^2-(t-x_1)^2)\partial_1+2(t-x_1)x_2\partial_2, \quad  \chi=t-x_1.
\end{align*}
We can compute the deformation tensor
\[
\pi^X_{\mu\nu}= 2(t-x_1)m_{\mu\nu}
\]
and it is obvious that 
\[
\partial_t\chi=1, \quad (\partial_t+\partial_1)\chi=0,\quad \Box\chi=0.
\]
Here recall that $m_{\mu\nu}$ is the flat  Minkowski metric on $\mathbb{R}^{1+2}$. For such choice of vector field,  the bulk term on the right hand side of the energy identity \eqref{eq:energy:id} becomes 
\begin{align*}
& T[\phi]^{\mu\nu}\pi^X_{\mu\nu}+
\chi ( \l D_\mu\phi, D^\mu\phi \r +|\phi|^{p+1}) \\
&=2\chi \left(\l D_\ga \phi, D^\ga \phi\r -\frac{3}{2}(\l D^\ga \phi , D_\ga \phi\r +\frac{2}{p+1}|\phi |^{p+1})\right) +\chi ( \l D_\mu\phi, D^\mu\phi \r +|\phi|^{p+1})\\
&=-\frac{6(t-x_1)}{p+1}|\phi|^{p+1} +(t-x_1) |\phi|^{p+1}= -\frac{(5-p)(t-x_1)}{p+1}|\phi|^{p+1}.
\end{align*}
This implies that the bulk integral is nonnegative restricted to the exterior region when $ t\leq x_1$ for the subconformal case $p\leq 5$.

For the boundary integrals on the left hand side of the energy identity \eqref{eq:energy:id} with the above chosen vector field $X$ and function $\chi$, similarly on the null hyperplane $\{t-x_1=0\}$, we have
\begin{align*}
i_{P^{X, \chi}[\phi]}d\vol&=-(P^{X, \chi}[\phi])_{\L_1} dt dx_2\\
&= -( T[\phi]_{\L_1 \nu}X^\nu -
\f12 \L_1\chi |\phi|^2 + \f12 \chi\cdot \L_1 |\phi|^2)  dtdx_2\\
&=-T[\phi]_{\L_1 X} dtdx_2=-x_2^2T[\phi]_{\L_1\L_1} dtdx_2=-x_2^2|D_{\L_1}\phi|^2dtdx_2.
\end{align*}
 Here we note that on the null hyperplane  $\{t-x_1=0\}$ it holds that 
\[
\L_1\chi=0,\quad \chi=0,\quad X=x_2^2\L_1.
\]
Similarly on the constant $t$-slice $\{t=0\}$ or $\{t=T\}$, we have
\begin{align*}
  i_{P^{X,\chi}[\phi]}d\vol=&(P^{X, \chi}[\phi])^{0}dx= -( T[\phi]_{0 \nu}X^\nu -
\f12 \pa_t\chi |\phi|^2 + \f12 \chi\cdot \pa_t |\phi|^2)  dx\\
=&-\f12\big( (x_2^2+(t-x_1)^2) (|D\phi|^2+\frac{2}{p+1}|\phi|^{p+1})- |\phi|^2 +(t-x_1)   \pa_t |\phi|^2  \\
&\quad \qquad+2\l D_t  \phi, (x_2^2-(t-x_1)^2)D_1\phi+2(t-x_1)x_2 D_2\phi \r \big)  dx\\
=&-\f12( |x_2 D_{\L_1}\phi+(t-x_1) D_2\phi|^2+|(t-x_1) (D_t\phi-D_1\phi)+x_2D_2\phi+\phi|^2\\
&\qquad +\frac{2(x_2^2+(t-x_1)^2)}{p+1}|\phi|^{p+1}  +\partial_1((t-x_1)|\phi|^2)-\partial_2(x_2|\phi|^2))  dx.
\end{align*}
Since the initial data decay sufficiently fast, integration by parts on the hypersurface $$\{(t, x_1, x_2)| t=t, x_1\geq 0\}$$ leads to 
\begin{align*}
  \int_{ x_1\geq t} (\partial_1((x_1-t)|\phi|^2)+\partial_2(x_2|\phi|^2))(t,x)dx=0.
\end{align*}
Combining the above computations, in view of the energy identity \eqref{eq:energy:id}, we obtain the following energy identity
\begin{align*}
  &2\iint_{0\leq t\leq \min(x_1,T)}  \frac{(5-p)(x_1-t)}{p+1}|\phi|^{p+1}d\vol +2\int_{0\leq t\leq T}x_2^2|D_{\L_1}\phi|^2(t,t,x_2)dtdx_2 \\
  &+ \int_{x_1\geq T}  |x_2 D_{L_1}\phi-(T-x_1)D_2\phi|^2+|(T-x_1)D_{\Lu_1}\phi+D_2(x_2\phi)|^2
  +\frac{2(x_2^2+(T-x_1)^2)}{p+1}|\phi|^{p+1}  dx \\
  &= \int_{x_1\geq t}  (|x_2 D_{L_1}\phi-x_1D_2\phi|^2+|-x_1 D_{\Lu_1}\phi+D_2(x_2\phi)|^2
  +\frac{2(x_2^2-x_1^2)}{p+1}|\phi|^{p+1} ) dx,
\end{align*}
from which we obtain the weighted energy flux bound 
\begin{align}
\label{eq:E1}
  &\int_{t\geq 0}x_2^2|D_{\L_1}\phi|^2(t,t,x_2)dtdx_2\\
  \notag
  &\leq \f12\int_{x_1\geq 0}  |x_2(\phi_1+D_1\phi_0)-x_1D_2\phi_0|^2  +|x_1(D_1\phi_0-\phi_1)+x_2D_2\phi_0+\phi_0|^2 
+\frac{2|x|^2}{p+1}|\phi_0|^{p+1}  dx\\
\notag
&\les  \int_{\{t=0\}}(1+|x|^2)(|\phi_1|^2+|\bar{D}\phi_0|^2+ |\phi_0|^{p+1})dx = \mathcal{E}_{0, 2}.
\end{align}
Here recall that the implicit constant  relies only on $p$ and the integral of $|\phi_0|^2$ can be controlled by using a type of  Hardy's inequality.

\bigskip

Finally in the interior region $\{t\geq x_1\}$, for the vector field $X$ and the function $\chi$
\begin{align*}
X&=u_1^q(\partial_t-\partial_1)+u_1^{q-2}x_2^2(\partial_t+\partial_1)+2u_1^{q-1}x_2\partial_2, \\
 \chi&=u_1^{q-1},\quad u_1=t-x_1+1,\quad q=\f12(p-1),
\end{align*}
we can compute the covariant derivatives
\begin{align*}
\nabla_{\L_1}X= 0,\quad &\nabla_{\Lu_1}X= 2qu_1^{q-1}\Lu_1+2(q-2)u_1^{q-3}x_2^2\L_1+4(q-1)u_1^{q-2}x_2\partial_2,\\
&\nabla_{2}X=2u_1^{q-2}x_2\L_1+2u_1^{q-1}\partial_2
\end{align*}
in terms of the null frame $\{\L_1, \Lu_1, \pa_2\}$. 
Therefore we have the nonvanishing components of the deformation tensor $\pi_{\mu\nu}^X$
\begin{align*}
\pi^X_{L_1\Lb_1}&=-2qu_1^{q-1},\quad \pi^X_{\Lb_1\Lb_1}=-4(q-2)u_1^{q-3}x_2^2,\\
 \pi^X_{\Lb_1 \pa_2}&=2(q-2)u_1^{q-2}x_2,\quad \pi^X_{\pa_2\pa_2}=2u_1^{q-1}.
\end{align*}
Note $\Box\chi=\Box u_1^{q-1} =0$. The bulk term on the right hand side of the energy identity \eqref{eq:energy:id} is of the form 
\begin{align*}
& T[\phi]^{\mu\nu}\pi^X_{\mu\nu}+
\chi (\l D_\mu\phi, D^\mu\phi \r + |\phi|^{p+1})-\f12 \Box\chi |\phi|^2\\
&=-qu_1^{q-1} (|D_2\phi|^2+\frac{2}{p+1}|\phi|^{p+1})+ 2u_1^{q-1}(|D_2\phi|^2-\frac{1}{2}\l D^\mu \phi , D_\mu\phi\r-\frac{1}{p+1}|\phi|^{p+1})\\
&\quad -(q-2)u_1^{q-3}x_2^2 |D_{\L_1}\phi|^2-2(q-2)u_1^{q-2}\l x_2 D_2\phi, D_{\L_1}\phi\r+
u_1^{q-1} \l D_\mu\phi, D^\mu\phi\r  +u_1^{q-1} |\phi|^{p+1} \\
&=(2-q)u_1^{q-3} |x_2D_{\L_1}\phi+u_1 D_2\phi|^2 .
\end{align*}
This shows that for the subconformal case when $p\leq 5$, the bulk term is  nonnegative as $q=\frac{p-1}{2}\leq 2$. 

Apply the energy identity \eqref{eq:energy:id} to the above chosen vector field $X$ and function $\chi$ for the region  $\mathcal{D}=\{\max(x_1,0)\leq t\leq T\}$. For the boundary integrals on the left hand side, on the null hyperplane $\{t-x_1=0\}$,
note that 
\[
\chi= u_1=1, \quad X=\Lu_1+x_2^2\L_1+2x_2\partial_2.
\]
Therefore   we can compute that 
\begin{align*}
i_{P^{X, \chi}[\phi]}d\vol&=-(P^{X, \chi}[\phi])_{L_1}dtdx_2= -( T[\phi]_{\L_1 \nu}X^\nu -
\f12 \L_1\chi |\phi|^2 + \f12 \chi\cdot \L_1 |\phi|^2)  dtdx_2\\
&=-(x_2^2T[\phi]_{\L_1\L_1}+T[\phi]_{\L_1\Lu_1}+2x_2T[\phi]_{\L_1 \pa_2}+ \f12 \L_1 |\phi|^2) dtdx_2\\
&=-(x_2^2|D_{\L_1}\phi|^2+|D_2\phi|^2+\frac{2}{p+1}|\phi|^{p+1}+2\l x_2D_2\phi, D_{\L_1}\phi\r+ \f12 \L_1 |\phi|^2) dtdx_2\\
&=-(|x_2D_{\L_1}\phi+D_2\phi|^2+\frac{2}{p+1}|\phi|^{p+1}+ \f12 \L_1 |\phi|^2) dtdx_2.
\end{align*}
On the part of the boundary $\pa\cD\cap \{t-x_1=0\}$, integration by parts again, we see that 
\begin{align*}
-\f12\int_{\pa\cD\cap \{t-x_1=0\}} \L_1 |\phi|^2 dtdx_2&= -\frac{1}{2}\int_{\pa\cD\cap \{t-x_1=0\}} \pa_{v_1} |\phi|^2 dv_1 dx_2 \\
&=-\f12 (\int_{t=T=x_1}|\phi(T, T, x_2)|^2 dx_2- \int_{t=0=x_1}|\phi(0, 0, x_2)|^2 dx_2).
\end{align*}
Similarly  on the constant $t$-slice, we  have 
\begin{align*}
  i_{P^{X,\chi}[\phi]}d\vol=&(P^{X, \chi}[\phi])^{0}dx= -( T[\phi]_{0 \nu}X^\nu -
\f12 \pa_t\chi |\phi|^2 + \f12 \chi\cdot \pa_t |\phi|^2)  dx\\
=&-\f12u_1^{q-2}\Big( (x_2^2+u_1^2) (|D\phi|^2+\frac{2}{p+1}|\phi|^{p+1})- (q-1)|\phi|^2  +u_1   \pa_t |\phi|^2 \\
&\qquad\qquad  +2 (x_2^2-u_1^2)\l D_t\phi, D_1\phi\r+4u_1 x_2 \l D_t\phi , D_2\phi\r \Big)  dx\\
=&-\f12 u_1^{q-2}\Big( |x_2D_{\L_1}\phi+u_1 D_2\phi|^2+|u_1 D_{\Lu_1}\phi+D_2(x_2\phi)|^2 +\frac{2(x_2^2+u_1^2)}{p+1}|\phi|^{p+1} \Big)dx\\
&- \f12 \Big( \partial_1(u_1^{q-1}|\phi|^2)- \partial_2(x_2u_1^{q-2}|\phi|^2)\Big)  dx.
\end{align*}
On the part of the boundary  $\pa\cD\cap \{t=\tau\}$ ($\tau=0$ or $T$), integration by parts again, we can show that 
\begin{align*}
 - \f12 \int_{\pa\cD\cap \{t=\tau\}}\Big( \partial_1(u_1^{q-1}|\phi|^2)- \partial_2(x_2u_1^{q-2}|\phi|^2)\Big)  dx=- \f12 \int_{\pa\cD\cap \{t=\tau\}}   |\phi|^2(\tau, \tau, x_2)   dx_2.
\end{align*}
Here we always use the fact that the solution decays sufficiently fast at spatial infinity (one could also take compact regions to approximate the region $\cD$). These computations show that 
\begin{align*}
  \int_{x_1\leq t}(\partial_1(u_1^{q-1}\phi^2)-\partial_2(x_2u_1^{q-2}\phi^2))dx\Big|_{t=0}^{t=T}=\int_{0\leq t\leq T} (\L_1 |\phi|^2)(t,t,x_2)dtdx_2.
\end{align*}
In other words, those terms on the boundary without a definite sign are canceled. 
Therefore combining the above computations, we obtain the energy identity
\begin{align*}
  &\int_{x_1\leq T} u_1^{q-2}(  |x_2D_{\L_1}\phi+u_1 D_2\phi|^2+|u_1 D_{\Lu_1}\phi+D_2(x_2\phi)|^2 +\frac{2(x_2^2+u_1^2)}{p+1}|\phi|^{p+1})(T, x) dx   \\
  &+2\iint_{\cD}  (2-q)u_1^{q-3} |x_2D_{\L_1}\phi+u_1D_2\phi|^2d\vol  \\ 
  &=2\int_{0\leq t\leq T}(|x_2D_{\L_1}\phi+D_2\phi|^2+\frac{2}{p+1}|\phi|^{p+1})(t,t,x_2)dtdx_2\\
  &+\int_{x_1\leq 0} u_1^{q-2}(  |x_2D_{\L_1}\phi+u_1 D_2\phi|^2+|u_1 D_{\Lu_1}\phi+D_2(x_2\phi)|^2 +\frac{2(x_2^2+u_1^2)}{p+1}|\phi|^{p+1})(0,x)dx.
\end{align*}
Recall that on the initial hypersurface $\{t=0\}$ it holds that 
\begin{align*}
\int_{x_1\leq 0} u_1^{q-2}(  |x_2D_{\L_1}\phi+u_1 D_2\phi|^2+|u_1 D_{\Lu_1}\phi+D_2(x_2\phi)|^2 +\frac{2(x_2^2+u_1^2)}{p+1}|\phi|^{p+1})(0,x)dx
 \les \mathcal{E}_{0, 2}. 
\end{align*}
In view of the previous estimates \eqref{eq:E1} and \eqref{eq:E2}, we can bound the weighted energy flux through the null hyperplane
\begin{align*}
  & \int_{0\leq t\leq T}(|x_2D_{\L_1}\phi+D_2 \phi|^2+\frac{2}{p+1}|\phi|^{p+1})(t,t,x_2)dtdx_2\\
  & \leq 2\int_{0\leq t\leq T}(x_2^2|D_{\L_1}\phi|^2+|D_2\phi|^2+\frac{2}{p+1}|\phi|^{p+1})(t,t,x_2)dtdx_2 
  \les \mathcal{E}_{0, 2}.
\end{align*}
Since $q=\frac{1}{2}(p-1)\leq 2$, we therefore obtain the weighted potential energy 
estimate 
\begin{align*}
  \int_{x_1\leq T}\frac{2(u_1^{q}+u_1^{q-2}x_2^2)|\phi|^{p+1}}{p+1}dx\les  \mathcal{E}_{0, 2},\quad \forall T\geq 0.
\end{align*}
Recall that $u_1=t-x_1+1$. Restricting the above integral to the region $\{x_1\leq 0\}$, we obtain that 
\begin{align*}
  \int_{x_1\leq 0} (t+1)^q |\phi|^{p+1}(t,x)dx &\leq \int_{x_1\leq t} u_1^q |\phi|^{p+1}(t,x)dx \les \mathcal{E}_{0, 2},\\
  \int_{x_1\leq t\leq r} (1+2r)^{q-2}x_2^2|\phi|^{p+1} dx&\leq \int_{x_1\leq t}  u_1^{q-2}x_2^2|\phi|^{p+1} dx \les \mathcal{E}_{0, 2}.
\end{align*}
Here $r=|x|=\sqrt{x_1^2+x_2^2}$.
This in particular shows that
\begin{align*}
  \int_{x_1\leq 0} (1+t)^{\frac{p-1}{2}} |\phi|^{p+1}(t,x)dx+\int_{x_1\leq 0, r\geq t} (1+r)^{\frac{p-1}{2}-2} x_2^2 |\phi|^{p+1}(t,x)dx\les \mathcal{E}_{0, 2}.
\end{align*}
By symmetry (or changing variable $x_1\rightarrow -x_1$), we also have
\begin{align*}
   \int_{x_1\geq 0} (1+t)^{\frac{p-1}{2}} |\phi|^{p+1}(t,x)dx+\int_{x_1\geq 0, r\geq t} (1+r)^{\frac{p-1}{2}-2} x_2^2 |\phi|^{p+1}(t,x)dx\les \mathcal{E}_{0, 2},
\end{align*}
which together with the previous estimate implies that 
\begin{align*}
   \int_{\mathbb{R}^2} (1+t)^{\frac{p-1}{2}} |\phi|^{p+1}(t,x)dx+\int_{r\geq t} (1+r)^{\frac{p-1}{2}-2} x_2^2 |\phi|^{p+1}(t,x)dx\les \mathcal{E}_{0, 2}.
\end{align*}
Now by symmetry again with $x_1$ to $x_2$, we also have
\begin{align*}
   \int_{r\geq t} (1+r)^{\frac{p-1}{2}-2} x_1^2 |\phi|^{p+1}(t,x)dx\les \mathcal{E}_{0, 2}.
\end{align*}
These estimates are sufficient to show the weighted potential energy estimate
\begin{align*}
  \int_{\R^2}(1+t+r)^{\frac{p-1}{2}}|\phi|^{p+1}(t,x)dx\les \mathcal{E}_{0, 2}.
\end{align*}
In particular the proposition holds for the subconformal case when $p\leq 5$.

\bigskip

For the superconformal case when $p\geq 5$, we can directly use the conformal Killing vector field 
\begin{align*}
X=(t^2+r^2)\pa_t+2tr \pa_r,\quad \chi=t.
\end{align*}
We can compute the deformation tensor 
\begin{align*}
\pi^X_{\mu\nu}=2tm_{\mu\nu}.
\end{align*}
Thus for the bulk integral, we can demonstrate that 
\begin{align*}
& T[\phi]^{\mu\nu}\pi^X_{\mu\nu}+
\chi (\l D_\mu\phi, D^\mu\phi \r + |\phi|^{p+1})-\f12 \Box\chi |\phi|^2\\
&=  2t \Big(\l D^\mu \phi, D_\mu\phi \r-\frac{3}{2}(\l D_\mu\phi, D^\mu\phi \r+\frac{2}{p+1}|\phi |^{p+1})\Big)+t (\l D_\mu\phi, D^\mu\phi \r + |\phi|^{p+1})\\
&=\frac{(p-5)t}{p+1}|\phi|^{p+1}.
\end{align*}
In particular the bulk integral on the right hand side of the energy identity \eqref{eq:energy:id} is nonnegative for the superconformal case $p\geq 5$ with the above conformal Killing vector field. 

Next for the boundary integrals for the region $\cD$ bounded by the constant $t$-slice $\{t=T\}$ and the initial hypersurface $\{t=0\}$, we first have 
\begin{align*}
  & i_{P^{X,\chi}[\phi]}d\vol=(P^{X, \chi}[\phi])^{0}dx= -( T[\phi]_{0 \nu}X^\nu -
\f12 \pa_t\chi |\phi|^2 + \f12 \chi\cdot \pa_t |\phi|^2)  dx\\
=&- \f12 \Big(  (t^2+r^2) (|D\phi|^2+\frac{2}{p+1}|\phi|^{p+1})-  |\phi|^2  +t   \pa_t |\phi|^2   +4tx_1 \l D_t\phi, D_1\phi\r+4t x_2 \l D_t\phi, D_2\phi\r \Big)  dx\\
=& - \f12 \Big(\frac{2(t^2+r^2)}{p+1}|\phi|^{p+1}+ |tD_t\phi +x_1 D_1\phi +x_2 D_2\phi +\phi|^2+|tD_1\phi +x_1 D_t\phi|^2\\
&\qquad +|tD_2\phi +x_2 D_t\phi |^2+|x_2 D_1\phi -x_1 D_2\phi |^2-\pa_1(x_1|\phi|^2)-\pa_2(x_2|\phi |^2)   \Big)  dx.
\end{align*}
On the constant $t$-slice $\{t=\tau\}$, note that 
\begin{align*}
\int_{t=\tau}  \pa_1(x_1|\phi|^2)+\pa_2(x_2|\phi |^2)  dx=0
\end{align*}
as long as the solution decays suitably fast at spatial infinity. We then obtain the conformal energy identity
\begin{align}
\label{eq:Eid:conf}
\int_0^T\int_{\mathbb{R}^2} \frac{(p-5)t}{p+1}|\phi|^{p+1} d\vol +\int_{t=T}  \frac{t^2+r^2}{p+1}|\phi|^{p+1} +Q[\phi] dx = \int_{t=0}  \frac{t^2+r^2}{p+1}|\phi|^{p+1} +Q[\phi] dx,
\end{align}
in which 
\begin{align}
\label{eq:def4Q}
Q[\phi]=\f12 \Big( |D_S\phi +\phi|^2+|tD_1\phi +x_1 D_t\phi|^2  +|tD_2\phi +x_2 D_t\phi |^2+|D_{\Omega}\phi|^2  \Big).
\end{align}
Here $S=t\pa_t+r\pa_r$ is the scaling vector field and $\Omega=x_1\pa_2-x_2\pa_1$ is the angular momentum vector field. 
Since $p\geq 5$ and $Q[\phi]\geq 0$, in view of the standard energy conservation  \eqref{eq:E3}, we in particular derive that 
\begin{align*}
\int_{\mathbb{R}^2}(t^2+r^2+1)|\phi|^{p+1}(t, x)dx\les  \int_{\mathbb{R}^2}  ( 1+r^2)|\phi|^{p+1}(0, x)  +Q[\phi](0, x)  dx\les \mathcal{E}_{0, 2}.
\end{align*}
We thus finished the proof for Proposition \ref{prop:td:2dCSH}.

\section{Rough pointwise decay estimate for all $p$}
As we have pointed previously, the decay mechanism of the solution is the potential energy decay obtained in the previous section. This section is devoted to showing the pointwise decay estimates for the solution by using a type of Br\'{e}zis-Gallouet-Wainger (see for example in  \cite{Brezis80:LSobolev}, \cite{Brezis80:LSobolev:g}) inequality. Although the solution decays faster for larger $p$ as discussed in the introduction, in view of the potential energy decay in Proposition \ref{prop:td:2dCSH}, the solution decays at most like linear wave for the superconformal case when $p\geq 5$. In other words, the case when $p\geq 5$ is similar to that for the case when $p=5$. For simplicity, we only discuss the situation for the subconformal case $p\leq 5$. In this section we  derive a rough decay estimate for the solution for all $1<p\leq 5$.

One difficulty in space dimension two is the failure of the classical Sobolev embedding for the $L^\infty$ bound by the standard energy. This can be  compensated  by using a type of logarithmic Sobolev inequality. In $\R^2$, let 
\[ 
B_R=\{x:|x|\leq R\},\quad B_R^c=\{x: |x|> R\}.
 \]
 We first recall the Br\'{e}zis-Gallouet-Wainger logarithmic Sobolev inequality.
 \begin{Lem}
  \label{lem:log:Sobolev}
  In $\mathbb{R}^{2}$, there exists a constant $C$ such that 
\begin{align*}
      &\|u\|_{L^\infty(B_{1/2})}^2 \leq C \| u\|_{H^1(B_{3/4})}^2  (1+\ln \frac{\|u\|_{H^2(B_{3/4})}}{\| u\|_{H^1(B_{3/4})}} ),\\
      &\|u\|_{L^\infty(B_1^c)}^2  \leq C \| u\|_{H^1(B_{5/6}^c)}^2  (1+\ln \frac{\|u\|_{H^2(B_{5/6}^c)}}{\| u\|_{H^1(B_{5/6}^c)}} ),\\
      &\|u\|_{L^{\infty}(\{\frac{1}{2}\leq |x|\leq \frac{3}{2}\} )}^2 \leq C\|u\|_{H^1(\R^2)}\|(u,\nabb u)\|_{L^2(\R^2)}\left(1+\ln\frac{\|u\|_{H^{2}(\R^2)}} {\|(u,\nabb u)\|_{L^2(\R^2)}}\right).
    \end{align*}
    Here $\nabb u=r^{-1}\Omega u=r^{-1}(x_1\pa_2-x_2\pa_1)u$. 
    \end{Lem}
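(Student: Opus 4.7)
The plan is to reduce all three inequalities to the classical Brezis-Gallouet-Wainger inequality on $\mathbb{R}^2$:
\[
\|v\|_{L^\infty(\mathbb{R}^2)}^2 \leq C\|v\|_{H^1(\mathbb{R}^2)}^2\Big(1+\ln\tfrac{\|v\|_{H^2(\mathbb{R}^2)}}{\|v\|_{H^1(\mathbb{R}^2)}}\Big),\qquad v\in H^2(\mathbb{R}^2).
\]
I would establish this master inequality by Fourier inversion. From $\|v\|_{L^\infty}\leq C\|\hat v\|_{L^1}$, split the integral at a cutoff $R\geq 1$ and apply Cauchy-Schwarz with the weight $(1+|\xi|^2)^{-1}$ below $R$ and $(1+|\xi|^2)^{-2}$ above $R$. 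Since the first weight has logarithmic integral over $\{|\xi|\leq R\}$ in $\mathbb{R}^2$ and the second has tail of order $R^{-2}$, this produces
\[
\|v\|_{L^\infty}^2 \lesssim (\ln R)\|v\|_{H^1}^2 + R^{-2}\|v\|_{H^2}^2.
\]
Optimizing by choosing $R\sim\|v\|_{H^2}/\|v\|_{H^1}$ when this ratio exceeds $e$, and $R=e$ otherwise, yields the master estimate.

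The first two local estimates then follow by applying the master inequality to $v=\chi u$, where $\chi$ is a smooth cutoff equal to $1$ on the target inner region and supported inside the outer one (e.g.\ $\chi=1$ on $B_{1/2}$ with $\mathrm{supp}\,\chi\subset B_{3/4}$; for the exterior case $\chi=1$ on $B_1^c$ with $\mathrm{supp}\,\chi\subset B_{5/6}^c$). By the Leibniz rule one has $\|\chi u\|_{H^k(\mathbb{R}^2)}\leq C_k\|u\|_{H^k(\mathrm{supp}\,\chi)}$ for $k=0,1,2$, and $\|u\|_{L^\infty(\{\chi=1\})}\leq\|\chi u\|_{L^\infty(\mathbb{R}^2)}$. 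The minor subtlety is to rewrite the logarithm in terms of $\|u\|_{H^2(B_{3/4})}/\|u\|_{H^1(B_{3/4})}$ instead of the ratio with $\chi u$; this can be handled by choosing the cutoff $R$ in the master split to be $\|u\|_{H^2(B_{3/4})}/\|u\|_{H^1(B_{3/4})}$ directly, since the optimization is monotone in its minimum and the relevant $H^k$ norms of $\chi u$ and $u\big|_{B_{3/4}}$ are comparable up to absolute constants.

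The annular estimate is more delicate because only the angular derivative $\nabb u=r^{-1}\Omega u$ appears in one of the two factors, while the other involves the full $H^1$ norm. My approach is to exploit the product structure of the annulus as $[1/2,3/2]\times S^1$ in polar coordinates. For each fixed $r\in[1/2,3/2]$, the one-dimensional Sobolev embedding on $S^1$ combined with a Brezis-Gallouet-type high/low split in the angular Fourier index of $u(r,\theta)=\sum_n u_n(r)e^{in\theta}$ would yield
\[
\|u(r,\cdot)\|_{L^\infty(S^1)}^2 \lesssim \|u(r,\cdot)\|_{L^2(S^1)}\,\|(u,\nabb u)(r,\cdot)\|_{L^2(S^1)}\Big(1+\ln\tfrac{\|u(r,\cdot)\|_{H^1(S^1)}+\|\nabb u(r,\cdot)\|_{L^2}^{\text{eff}}}{\|(u,\nabb u)(r,\cdot)\|_{L^2(S^1)}}\Big),
\]
where the factor of $\nabb u$ in the lower-order norm on the right is consistent with $\partial_\theta=r\nabb$ and $r\sim 1$. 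One then upgrades the pointwise-in-$r$ estimate to one uniform in $r\in[1/2,3/2]$ using a one-dimensional Sobolev embedding in the radial variable on a slightly larger interval, and it is precisely at this step that $\partial_r u$ (hence the full gradient and the norm $\|u\|_{H^1(\mathbb{R}^2)}$) enters one of the factors, while the angular-only norm $\|(u,\nabb u)\|_{L^2(\mathbb{R}^2)}$ persists in the other.

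The main obstacle will be the bookkeeping in the annular case: carefully threading the logarithm through the angular Fourier decomposition and the radial Sobolev embedding so as to end up with the \emph{asymmetric} ratio $\|u\|_{H^2(\mathbb{R}^2)}/\|(u,\nabb u)\|_{L^2(\mathbb{R}^2)}$ inside the logarithm and the matching asymmetric product outside. The master inequality and the two purely local estimates are by comparison direct consequences of Fourier analysis and a standard cutoff.
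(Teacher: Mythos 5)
Your treatment of the first two inequalities is correct and is essentially the intended route: the master Br\'ezis--Gallouet--Wainger bound via a Fourier high/low split with the weights $(1+|\xi|^2)^{-1}$ and $(1+|\xi|^2)^{-2}$ is exactly the mechanism the paper itself uses in the proof of Lemma \ref{lem:log:Sob:w}, and the localized versions follow by cutoff as you say (the paper defers these details to \cite{yang:NLW:2D}). Your point that the free parameter $R$ can be chosen in terms of the norms of $u$ rather than of $\chi u$ is the right way to fix the argument of the logarithm.

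The annular estimate is where your plan has a genuine gap, and it is structural rather than bookkeeping. A high/low split in the angular Fourier index at fixed $r$ does not produce a logarithm: on $S^1$ the low modes $|n|\leq N$ cost $N^{1/2}\|u(r,\cdot)\|_{L^2(S^1)}$ and the high modes cost $N^{-1/2}\|\partial_\theta u(r,\cdot)\|_{L^2(S^1)}$, so optimizing gives Agmon's inequality with no logarithm at all; the fixed-$r$ inequality you display, with a logarithm already present, is not what this split yields (the logarithm in Br\'ezis--Gallouet is a two-dimensional effect, coming from $\int_{|\xi|\leq R}(1+|\xi|^2)^{-1}d\xi\sim\ln R$, which has no one-dimensional analogue). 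More seriously, when you upgrade from fixed $r$ to $\sup_r$, the radial Sobolev embedding applied to the angular factor $\|\partial_\theta u(r,\cdot)\|_{L^2(S^1)}^2$ unavoidably costs the mixed second derivative $\|\partial_r\partial_\theta u\|_{L^2}$ with power one. Keeping $N$ free and optimizing only at the end, the best this scheme can deliver is
\begin{align*}
\|u\|_{L^\infty(\{\frac12\leq|x|\leq\frac32\})}^2\les N\,\|u\|_{L^2}\|u\|_{H^1}+N^{-1}\|\nabb u\|_{L^2}\|u\|_{H^2}\les\big(\|u\|_{L^2}\|u\|_{H^1}\|\nabb u\|_{L^2}\|u\|_{H^2}\big)^{\frac12},
\end{align*}
in which $\|u\|_{H^2}$ enters with the power $\frac12$ instead of logarithmically. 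That is strictly weaker than the stated inequality and would be useless in the proof of Proposition \ref{prop:ptdecay:allp}, where $\|u\|_{H^2}$ grows polynomially in $t$ and the entire point of the lemma is that this growth only appears inside a logarithm.

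To actually obtain the asymmetric product $\|u\|_{H^1}\|(u,\nabb u)\|_{L^2}$ times a logarithm, the frequency decomposition must be performed in the full two-dimensional frequency \emph{before} the anisotropic trace step: decompose $u=\sum_kP_ku$ by Littlewood--Paley, prove for each dyadic piece a log-free anisotropic bound of the schematic form $\|P_ku\|_{L^\infty}^2\les\|P_ku\|_{H^1}\|(P_ku,\nabb P_ku)\|_{L^2}$ (for a single piece the offending mixed derivative $\partial_r\partial_\theta P_ku$ can be traded for $2^k\partial_\theta P_ku$ and rebalanced against $\|P_ku\|_{H^1}$), then sum: Cauchy--Schwarz over the $\sim K$ pieces with $k\leq K$ produces the factor $K$, the tail $k>K$ is absorbed into $2^{-K}\|u\|_{H^2}$ times lower-order norms, and the choice $K\sim\ln\big(\|u\|_{H^2}/\|(u,\nabb u)\|_{L^2}\big)$ yields the claimed estimate. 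The order of operations is the essential point your proposal misses.
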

 The proof is essentially based on the classical logarithmic Sobolev inequality originally introduced by Br\'{e}zis-Gallouet-Wainger. The detailed proof for the above lemma could be found for example in \cite{yang:NLW:2D}. 

 However to adapt the above Sobolev inequality to the covariant derivatives of the solution studied  in this paper, we also need a refined version of the above logarithmic Sobolev inequality.
 \begin{Lem}
  \label{lem:log:Sob:w}
  Assume that $u\in H^2(\R^2)$ such that $\|\nabla u\|_{L^2}>0$. Then it holds that 
\begin{align*}
       \|u\|_{L^\infty}^2 \leq \frac{3\sqrt{3}}{2\pi}  \| \nabla u\|_{L^2}^2  \ln \frac{\|u\|_{H^2(\mathbb{R}^2)}}{\| \nabla u\|_{L^2}}.
       \end{align*}
    \end{Lem}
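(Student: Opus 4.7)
The plan is Fourier analytic, following the classical strategy behind Br\'ezis--Gallouet--Wainger but tracking the constant carefully. By Fourier inversion and Plancherel, in the convention $\hat u(\xi)=\int u(x)e^{-ix\cdot\xi}dx$,
\[
|u(x)|\leq \frac{1}{(2\pi)^{2}}\|\hat u\|_{L^{1}(\mathbb{R}^{2})},\qquad \bigl\||\xi|^{k}\hat u\bigr\|_{L^{2}}^{2}=4\pi^{2}\|\nabla^{k}u\|_{L^{2}}^{2},\quad k=0,1,2.
\]
I would then apply Cauchy--Schwarz to $\|\hat u\|_{L^{1}}$ against a positive radial weight $w$:
\[
\|\hat u\|_{L^{1}}^{2}\;\leq\;\Bigl(\int_{\mathbb{R}^{2}}\frac{d\xi}{w(\xi)^{2}}\Bigr)\Bigl(\int_{\mathbb{R}^{2}}w(\xi)^{2}|\hat u(\xi)|^{2}d\xi\Bigr),
\]
and pick the weight $w^{2}(\xi)=A+B|\xi|^{2}+C|\xi|^{4}$ with positive constants $A,B,C$ at our disposal. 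The second factor then equals $4\pi^{2}(A\|u\|_{L^{2}}^{2}+B\|\nabla u\|_{L^{2}}^{2}+C\|\Delta u\|_{L^{2}}^{2})$, while the first factor evaluates in closed form after the substitution $s=|\xi|^{2}$.

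In the regime $B^{2}>4AC$, which is precisely the regime producing a logarithm, writing $B=2\sqrt{AC}\cosh\eta$ for $\eta>0$ yields $\int_{\mathbb{R}^{2}}w^{-2}d\xi = \pi\eta/(\sqrt{AC}\sinh\eta)$. Minimizing in the ratio $\sqrt{A/C}$ (setting it equal to $\|u\|_{L^{2}}/\|\Delta u\|_{L^{2}}$) balances the $\|u\|_{L^{2}}$ and $\|\Delta u\|_{L^{2}}$ contributions and reduces the bound to the one-parameter estimate
\[
|u(x)|^{2}\;\leq\;\frac{\eta}{2\pi\sinh\eta}\Bigl(\|u\|_{L^{2}}\|\Delta u\|_{L^{2}}+\cosh\eta\,\|\nabla u\|_{L^{2}}^{2}\Bigr).
\]
Using $\|u\|_{L^{2}}\|\Delta u\|_{L^{2}}\leq \tfrac12(\|u\|_{L^{2}}^{2}+\|\Delta u\|_{L^{2}}^{2})\leq \tfrac12(\|u\|_{H^{2}}^{2}-\|\nabla u\|_{L^{2}}^{2})$ and choosing $\eta$ so that $\sinh\eta$ absorbs the $\|u\|_{H^{2}}$ factor, the right-hand side takes the form $C\|\nabla u\|_{L^{2}}^{2}\ln(\|u\|_{H^{2}}/\|\nabla u\|_{L^{2}})$, with the precise constant obtained from the optimization in $\eta$.

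The main obstacle is to extract the sharp numerical constant $\frac{3\sqrt 3}{2\pi}$ rather than only a constant of the correct order. An asymptotic analysis as $\|u\|_{H^{2}}/\|\nabla u\|_{L^{2}}\to\infty$ gives a smaller constant, so to have the inequality hold uniformly for all admissible ratios one must carry the optimization to a transcendental relation in $\eta$ and bound the result in closed form. The value $\frac{3\sqrt 3}{2}=1/\max_{x\in(0,1)}x(1-x^{2})$ strongly suggests that the final step reduces to maximizing a cubic $x(1-x^{2})$ in a dimensionless parameter encoding the balance between the three pieces $A\|u\|_{L^{2}}^{2}$, $B\|\nabla u\|_{L^{2}}^{2}$, $C\|\Delta u\|_{L^{2}}^{2}$. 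The hypothesis $\|\nabla u\|_{L^{2}}>0$ is used only to guarantee that the logarithm is well defined and nonnegative, since $\|u\|_{H^{2}}\geq \|\nabla u\|_{L^{2}}$ automatically.
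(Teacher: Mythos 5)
Your setup is exactly the paper's: Fourier inversion, Cauchy--Schwarz against a quartic radial weight, and a closed-form evaluation of $\int w^{-2}d\xi$. Your intermediate inequality
$|u(x)|^2\leq \frac{\eta}{2\pi\sinh\eta}\bigl(\|u\|_{L^2}\|\Delta u\|_{L^2}+\cosh\eta\,\|\nabla u\|_{L^2}^2\bigr)$
is correct (modulo the harmless slip that the balancing choice is $\sqrt{A/C}=\|\Delta u\|_{L^2}/\|u\|_{L^2}$, not its reciprocal). But the proof is not finished: the final step, producing the stated constant $\frac{3\sqrt3}{2\pi}$ uniformly over all admissible ratios $X=\|u\|_{H^2}/\|\nabla u\|_{L^2}\geq\sqrt3$, is only asserted. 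As you yourself note, the natural choice of $\eta$ gives the asymptotic constant $2/\pi$ but fails to be uniform; for instance taking $\cosh\eta=\frac{X^2-1}{2}$ one gets the bound $\frac{2\eta\coth\eta}{\pi}\ln X$ with $\coth\eta$ large when $X$ is close to $\sqrt3$, so the claimed constant is not reached without a further case analysis. Your guess that $\frac{3\sqrt3}{2}$ arises from maximizing $x(1-x^2)$ is also not how the constant actually appears.

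The paper closes this gap without any transcendental optimization by a specific algebraic choice of the weight $|\xi|^4+2a|\xi|^2+b$ with
$a=1+\|\nabla^2u\|_{L^2}^2/\|\nabla u\|_{L^2}^2$ and $b=\|\nabla u\|_{L^2}^2/\|u\|_{L^2}^2$. Two things then happen simultaneously. First, the weighted quantity collapses exactly: $\|\nabla^2u\|_{L^2}^2+2a\|\nabla u\|_{L^2}^2+b\|u\|_{L^2}^2=(a-1+2a+1)\|\nabla u\|_{L^2}^2=3a\|\nabla u\|_{L^2}^2$. Second, Gagliardo--Nirenberg gives $b\leq a-1\leq a^2/4$, whence $\sqrt{a^2-b}\geq\frac{\sqrt3}{2}a$ and
$\ln\frac{a+\sqrt{a^2-b}}{a-\sqrt{a^2-b}}\leq\ln\frac{4a^2}{b}\leq 6\ln\frac{\|u\|_{H^2}}{\|\nabla u\|_{L^2}}$
(the last inequality being $4PQ^2\leq(P+Q)^3$ with $P=\|u\|_{L^2}^2$, $Q=\|\nabla u\|_{L^2}^2+\|\nabla^2u\|_{L^2}^2$). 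Combining, $(2\pi)^2\|u\|_{L^\infty}^2\leq\frac{\pi}{\sqrt3\,a}\cdot 3a\|\nabla u\|_{L^2}^2\cdot6\ln X$, which is the stated bound. If you want to complete your write-up, the shortest route is to abandon the two-stage balancing in $(A,C)$ and $\eta$ and instead plug in this explicit choice of $a$ and $b$; the constant then falls out by arithmetic rather than by optimization.
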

\begin{proof}
For completeness we give a proof for this lemma.  Let $\hat{u}$ be the Fourier transform
\[
\hat{u}(\xi)=\frac{1}{2\pi}\int_{\R^2} u(x)e^{-ix\cdot \xi }dx.
\]
  By Fourier inverse transform, we see that
\begin{align*}
(2\pi)^2\|u\|_{L^\infty}^2\leq \|\hat{u}\|_{L^1}^2 \leq \left(\int_{\R^2} (|\xi|^4+2a |\xi|^2+b)|\hat{u}|^2 d\xi \right)\left(\int_{\R^2}(|\xi|^4+2a |\xi|^2+b)^{-1}d\xi \right).
\end{align*}
By the assumption, it is obvious that $\|u\|_{L^2}>0$. Let
\[
a=1+\frac{\|\nabla^2 u\|_{L^2}^2}{\| \nabla u\|_{L^2}^2},\quad b=\frac{\|\nabla u\|_{L^2}^2}{\|u\|_{L^2}^2}.
\]
Gagliardo-Nirenberg's inequality implies that
\[
b\leq a-1\leq \frac{a^2}{4}.
\]
In particular we can compute that
\begin{align*}
\int_{\R^2}(|\xi|^4+2a |\xi|^2+b)^{-1}d\xi=\pi \int_0^\infty \frac{1}{r^2 +2a r+b}dr=\pi \frac{1}{2\sqrt{a^2-b}}\ln \frac{a+\sqrt{a^2-b}}{a-\sqrt{a^2-b}}.
\end{align*}
Therefore we have
\begin{align*}
(2\pi)^2\|u\|_{L^\infty}^2 &\leq   \frac{\pi }{\sqrt{3} a }  \int_{\R^2} |\nabla^2 u|^2 +2a |\nabla u|^2+b |u|^2 dx \cdot \ln \frac{4a^2}{b}  \\
&\leq  6\sqrt{3}\pi  \ln \frac{\|u \|_{H^2}}{\|\nabla u\|_{L^2}} \int_{\R^2}   |\nabla u|^2  dx.
\end{align*}
The Lemma holds.
\end{proof}

To apply the above logarithmic Sobolev inequality, we need higher order weighted energy bound for the solution, which heavily relies on the potential energy decay obtained in the previous section.
\begin{Prop}
  \label{prop:phi:L2:1}
 For the case when $1<p\leq 5$, the solution $\phi$ to \eqref{eq:CSH:p:2d} verifies the following weighted energy bound 
  \begin{align*}
 (1+t)^2\|\D  \phi\|_{L^2(\mathbb{R}^2)}^2+\|(1+|t-r|)\bar{D} \phi\|_{L^2(\mathbb{R}^2)}^2+\|\phi\|_{L^2(\mathbb{R}^2)}^2&\leq C_{p} \mathcal{E}_{0, 2}
 (1+t)^{\frac{5-p}{2}}
 \end{align*}
 for some constant $C_{p}$ depending on $p$. Here recall that $\bar{D}\phi=(D_1\phi, D_2\phi)$ and  $\D=r^{-1}D_{\Omega}$.
  \end{Prop}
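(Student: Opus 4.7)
The plan is to apply the conformal identity \eqref{eq:Eid:conf} derived in Section~2 using the conformal Killing multiplier $X=(t^2+r^2)\pa_t+2tr\pa_r$, $\chi=t$, and then extract the three weighted norms from the boundary density $Q[\phi]$ defined in \eqref{eq:def4Q}. In the subconformal range $1<p\le 5$ the bulk density $\frac{(p-5)t}{p+1}|\phi|^{p+1}$ is non-positive, so moving it to the right of \eqref{eq:Eid:conf} and invoking the potential-energy decay of Proposition~\ref{prop:td:2dCSH} yields
\[
\int_0^T\!\!\int_{\R^2}\tfrac{(5-p)t}{p+1}|\phi|^{p+1}\,d\vol\les \int_0^T t(1+t)^{-\frac{p-1}{2}}\mathcal{E}_{0,2}\,dt\les \mathcal{E}_{0,2}(1+T)^{\frac{5-p}{2}}.
\]
Combined with the initial bound $\int_{t=0}(\cdot)\,dx\les \mathcal{E}_{0,2}$, in which the $\|\phi_0\|_{L^2}^2$ contribution is absorbed by the Hardy-type inequality flagged after \eqref{eq:E1}, this gives the key a priori control $\int_{\R^2}Q[\phi](T,x)\,dx\les \mathcal{E}_{0,2}(1+T)^{\frac{5-p}{2}}$.

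Next I would exploit the sum-of-squares structure of $2Q[\phi]=|D_S\phi+\phi|^2+\sum_{j=1,2}|tD_j\phi+x_jD_t\phi|^2+|D_\Omega\phi|^2$. For the angular bound, the pointwise inequality $\sum_{j}|tD_j\phi+x_jD_t\phi|^2\ge t^2|\D\phi|^2$ (which follows from $2tr\,\text{Re}\l D_r\phi,D_t\phi\r\ge -t^2|D_r\phi|^2-r^2|D_t\phi|^2$ combined with $|\bar{D}\phi|^2=|D_r\phi|^2+|\D\phi|^2$) together with $\sum_j|tD_j\phi+x_jD_t\phi|^2\le 2Q[\phi]$ gives $\int t^2|\D\phi|^2\,dx\le 2\int Q\,dx$; adding the standard energy $\|\D\phi\|_{L^2}^2\le \mathcal{E}_{0,0}\les\mathcal{E}_{0,2}$ yields $(1+t)^2\|\D\phi\|_{L^2}^2\les\mathcal{E}_{0,2}(1+t)^{(5-p)/2}$. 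For the $(1+|t-r|)$-weighted radial bound I would split $(1+|t-r|)^2\le 2+2(t-r)^2$, write $D_r=\tfrac12(D_{\L}-D_{\Lu})$, and exploit the algebraic identity
\[
|D_S\phi+\phi|^2+\!\!\sum_{j=1,2}\!|tD_j\phi+x_jD_t\phi|^2=\tfrac{(t+r)^2}{2}|D_{\L}\phi|^2+\tfrac{(t-r)^2}{2}|D_{\Lu}\phi|^2+t^2|\D\phi|^2+S|\phi|^2+|\phi|^2
\]
which, after integration, controls both $\int(t+r)^2|D_{\L}\phi|^2$ and $\int(t-r)^2|D_{\Lu}\phi|^2$ by $\int Q\,dx$ plus (the forthcoming bound on) $\|\phi\|_{L^2}^2$; each piece of the weighted $\|\bar{D}\phi\|_{L^2}^2$ is then absorbed into either this bound or the standard energy.

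The main obstacle is the unweighted $\|\phi\|_{L^2}^2$, because $|\phi|^2$ is not itself among the squares making up $2Q[\phi]$. My strategy is to leverage the bound $\|D_S\phi+\phi\|_{L^2}^2\le 2\int Q\,dx\les \mathcal{E}_{0,2}(1+t)^{(5-p)/2}$: pair $D_S\phi+\phi$ with $\phi$, integrate over $\R^2$, and integrate by parts in $r$ using the two-dimensional identity $\int r\pa_r|\phi|^2\,dx=-2\|\phi\|_{L^2}^2$. Writing $f(t):=\|\phi(t)\|_{L^2}^2$, the pairing reduces to $tf'(t)$, and Cauchy--Schwarz yields
\[
t\,f'(t)=\int_{\R^2}2\,\text{Re}\l D_S\phi+\phi,\phi\r\,dx\le 2\|D_S\phi+\phi\|_{L^2}\sqrt{f(t)},
\]
so $(\sqrt{f}\,)'(t)\le C\sqrt{\mathcal{E}_{0,2}}(1+t)^{(5-p)/4}/t$. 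Integrating from $t=1$ (where $f(1)\les\mathcal{E}_{0,2}$ is obtained by a short Gr\"onwall on $[0,1]$ using the standard energy together with the Hardy-controlled $\|\phi_0\|_{L^2}^2$) and squaring delivers $\|\phi\|_{L^2}^2\les\mathcal{E}_{0,2}(1+t)^{(5-p)/2}$ for $p<5$. The endpoint $p=5$ requires only a minor refinement since the conformal bulk then vanishes and $\int Q\,dx$ is conserved, so all three weighted norms are in fact uniformly bounded.
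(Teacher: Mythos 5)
Your proposal follows essentially the same route as the paper: bound $\int_{\R^2} Q[\phi]\,dx$ by the conformal identity \eqref{eq:Eid:conf} together with the potential-energy decay of Proposition \ref{prop:td:2dCSH}, obtain $\|\phi\|_{L^2}$ from the identity $\int_{\R^2}2\,\Re\langle D_S\phi+\phi,\phi\rangle\,dx=t\,\tfrac{d}{dt}\|\phi\|_{L^2}^2$ followed by Cauchy--Schwarz and integration of the resulting ODE, and extract the weighted derivative norms from the sum-of-squares structure of $Q[\phi]$ via algebraic identities equivalent to the paper's decomposition of $tD_j\phi+x_jD_t\phi$ into radial and angular parts. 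The argument is correct; the only caveat is that your closing remark on the endpoint $p=5$ glosses over the same logarithmic borderline in $\int_1^t s^{-1}(1+s)^{(5-p)/4}\,ds$ that the paper's own computation does, so the constant there degenerates in exactly the same way.
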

\begin{proof}
The above weighted energy estimate relies on the conformal energy identity \eqref{eq:Eid:conf} which also holds for the subconformal case $1<p\leq 5$. In view of the potential energy decay in Proposition \ref{prop:td:2dCSH}, we derive that 
\begin{align}
\notag
&\int_{\mathbb{R}^2}  \frac{t^2+r^2}{p+1}|\phi(t, x)|^{p+1} +\f12 \Big( |D_S\phi +\phi|^2+|tD_1\phi +x_1 D_t\phi|^2  +|tD_2\phi +x_2 D_t\phi |^2+|D_{\Omega}\phi|^2  \Big) dx \\
\notag
&= \int_{\mathbb{R}^2}  \frac{t^2+r^2}{p+1}|\phi(0, x)|^{p+1} +Q[\phi](0, x) dx +\int_0^s\int_{\mathbb{R}^2} \frac{(5-p)s}{p+1}|\phi(s, x)|^{p+1} dxds\\
\label{eq:Q:bd}
&\les \mathcal{E}_{0, 2}+\mathcal{E}_{0, 2}\int_0^t s(1+s)^{-\frac{p-1}{2}}ds\\
\notag
&\les  \mathcal{E}_{0, 2} (1+s)^{\frac{5-p}{2}}.
\end{align}
Here  we still use the notation $A\les B$ to be short for $A\leq CB$ for some constant $C$ relying only on $p$.

First note that for small $t\leq 1$, we have  
\begin{align*}
  \|\phi(t)\|_{L^2} &\leq \|\phi(0)\|_{L^2}+\int_0^1\|D_t\phi(s)\|_{L^2}ds\\
  &\les \sqrt{\mathcal{E}_{0, 2} }+ \sqrt{\mathcal{E}_{0, 0} } \les \sqrt{\mathcal{E}_{0, 2} }
\end{align*}
by using the standard energy conservation. We remark here that the initial $L^2$ bound for the solution in terms of the conformal energy is a direct consequence of a type of Hardy's inequality.

Then for large $t\geq 1$, note that 
\begin{align*}
 &\int_{\mathbb{R}^2}(D_S\phi+\phi)\cdot \phi dx=\int_{\mathbb{R}^2}(tD_t\phi+x_1 D_1\phi +x_2 D_2\phi ) \cdot\phi dx\\
 &= \frac{1}{2}\int_{\mathbb{R}^2} t\pa_t|\phi|^2+x_1 \pa_1 |\phi|^2+x_2 \pa_2 |\phi|^2+2|\phi|^2dx\\
 &=\frac{t}{2}\pa_t\int_{\mathbb{R}^2}  |\phi|^2dx +\frac{1}{2}\int_{\mathbb{R}^2}  \pa_1 (x_1|\phi|^2)+ \pa_2 (x_2|\phi|^2) dx\\
 &= \frac{t}{2}\pa_t\int_{\mathbb{R}^2}  |\phi|^2dx.
\end{align*}
Combined with the previous conformal energy bound, we in particular obtain that 
 \begin{align*}
\frac{t}{2}\frac{d}{dt}\|\phi(t)\|_{L^2}^2 &\leq \|\phi(t)\|_{L^2}\|(D_S\phi+\phi)(t)\|_{L^2}\\
&\les \|\phi(t)\|_{L^2} \sqrt{\mathcal{E}_{0, 2}} (1+t)^{\frac{5-p}{4}},
\end{align*}
from which we have 
\begin{align*}
  \|\phi(t)\|_{L^2}&\les \|\phi(1)\|_{L^2}+\int_1^t \frac{d}{dt}\|\phi(s)\|_{L^2}ds\quad   \\
   & \les \sqrt{\mathcal{E}_{0, 2} }+\sqrt{\mathcal{E}_{0, 2} } \int_1^t s^{-1}(s+1)^{\frac{5-p}{4}}ds \\
   &\les \sqrt{\mathcal{E}_{0, 2} } (t+1)^{\frac{5-p}{4}},\quad \forall t\geq 1.
\end{align*}
Therefore the $L^2$ bound for the solution   holds. 

Next we use the bound for $\|\phi\|_{L^2}$ to control the first order weighted energy, which relies on the following observation
\begin{align*}
\omega_1(tD_1\phi+x_1 D_t\phi)+\omega_2 (tD_2\phi +x_2 D_t\phi)&=t  D_r\phi +rD_t\phi,\\
\omega_2 (tD_1\phi+x_1 D_t\phi)-\omega_1 (tD_2\phi +x_2 D_t\phi)&=-\frac{t}{r} D_{\Omega}\phi,\quad \omega_i=\frac{x_i}{r},\quad i=1, 2.
\end{align*}
Note that $$D_1=\omega_1 D_r-\omega_2 r^{-1}D_{\Omega},\quad D_2=\omega_2 D_r+\omega_1 r^{-1}D_{\Omega}.$$
We therefore can bound that  
\begin{align*}
& (1+\frac{1+t^2}{r^2})|D_{\Omega}\phi|^2+(1+|t-r|^2)(|D_1\phi|^2+|D_2\phi|^2)\\
& \les (1+\frac{1+t^2}{r^2})|D_{\Omega}\phi|^2+(1+|t-r|^2)(|D_r\phi|^2+|D_{r^{-1}\Omega}\phi|^2)\\
&\les |\bar{D}\phi|^2+(1+\frac{t^2}{r^2}) |D_{\Omega}\phi|^2+|\frac{t}{t+r}(tD_r\phi +rD_t\phi)-\frac{r}{t+r}(tD_t\phi+rD_r\phi)|^2\\
&\les |\bar{D}\phi|^2+|\phi|^2+|D_S\phi+\phi|^2+|tD_1\phi+x_1 D_t\phi|^2+|tD_2\phi +x_2 D_t\phi|^2+|D_{\Omega}\phi|^2.
\end{align*}
Now by using the bound \eqref{eq:Q:bd} and the standard energy conservation, we see that 
\begin{align*}
& \int_{\mathbb{R}^2}(1+t^2+r^2)|D_{r^{-1}\Omega}\phi|^2+(1+|t-r|^2) |\bar{D}\phi|^2dx\\
&\les \int_{\mathbb{R}^2} |\bar{D}\phi|^2+|\phi|^2+|D_S\phi+\phi|^2+|tD_1\phi+x_1 D_t\phi|^2+|tD_2\phi +x_2 D_t\phi|^2+|D_{\Omega}\phi|^2 dx\\
&\les \mathcal{E}_{0, 2}  (t+1)^{\frac{5-p}{2}}.
\end{align*}
We thus finished the proof for the proposition. 
\end{proof}
To apply the logarithmic Sobolev inequality, we also need to control the growth of the second order energy. It suffices to show that the second order energy grows at most polynomially in terms of time. Unlike the flat case in \cite{yang:NLW:2D}, commuting the equation with covariant derivative may introduce new nonlinear terms. In fact we can  compute that 
\begin{align*}
\Box_A D_\mu \phi =[\Box_A, D_\mu]\phi +D_\mu \Box_A\phi =2i F_{\nu \mu}D^\nu \phi+i \pa^{\nu}F_{\nu \mu} \phi +D_\mu(|\phi|^{p-1}\phi).
\end{align*}
Since $$F_{\nu\mu}=\varepsilon_{\nu\mu\ga}J^{\ga}[\phi]=\varepsilon_{\nu\mu\ga}\Im(\phi\cdot\overline{D^\ga\phi } ),$$
we then can show that 
\begin{align*}
 \pa^{\nu}F_{\nu\mu}  &=\varepsilon_{\nu\mu\ga} \Im(D^\nu\phi \cdot \overline{D^\ga\phi})+\varepsilon_{\nu\mu\ga}  \Im(\phi \cdot \overline{D^\nu D^\ga\phi})   \\
 &=\varepsilon_{\nu\mu\ga} \Im(D^\nu\phi \cdot \overline{D^\ga\phi})+\frac{1}{2}\varepsilon_{\nu\mu\ga}  \Im(\phi \cdot \overline{D^\nu D^\ga\phi}-\phi \cdot \overline{D^\ga D^\nu\phi})  \\
 &=\varepsilon_{\nu\mu\ga} \Im(D^\nu\phi \cdot \overline{D^\ga\phi})+\frac{1}{2}\varepsilon_{\nu\mu\ga}  \Im(\phi \cdot \overline{i F^{\nu\ga}\phi} ) \\
 &=\varepsilon_{\nu\mu\ga} \Im(D^\nu\phi \cdot \overline{D^\ga\phi})-\frac{1}{2}\varepsilon_{\nu\mu\ga}   |\phi|^2  F^{\nu\ga}.
\end{align*}
In general the first term does not vanish as both  $\varepsilon_{\nu\mu\ga}$ and $\Im(D^\nu\phi \cdot \overline{D^\ga\phi})$ are skew symmetric. It seems that this new type of nonlinearity arising from commutation could not be bounded by using Gagliardo-Nirenberg inequality. To control this term, we rely on a type of  Strichartz estimate.

\bigskip 

To establish the necessary Strichartz estimate used in this paper, let's review some standard definitions in harmonic analysis. For sufficiently smooth function $f$ defined in $\R^{n}$, let  $\widehat{f}=\mathcal{F}f$   the Fourier transform of $f$. For function $P$ defined on $\R$, the differential operator $P(|\nabla |)f$ is given through the relation 
 $$P(|\nabla |)f=\mathcal{F}^{-1}(P(|\xi|)\widehat{f}(\xi)).$$
Define the homogeneous weighted Sobolev norm 
\[
\|f\|_{\dot{H}_r^{s}}=\||\nabla|^s f\|_{L^r}.
\]
The standard Sobolev space $\dot{H}^s=\dot{H}_2^s$ is the special case when $r=2$. 
We also need to use  the homogeneous Besov space $\dot{B}_{p,q}^{s} $. Recall the  Littlewood–Paley projection
 $$ P_k f=\mathcal{F}^{-1}((\varphi (\xi/2^k)-\varphi (\xi/2^{k-1}))\widehat{f}(\xi))$$
for some fixed cut-off function $\varphi$, which equals to $1$ when $|x|\leq 1$ and vanishes when $|x|>2$.  Then the Besov norm is given by 

$$ \|f\|_{\dot{B}_{r,q}^{s}}= \left(\sum\limits_{k\in\mathbb{Z}}2^{kqs}\|P_k f\|_{ L_x^r}^q \right)^{\frac{1}{q}}. 
$$
Obviously the above definition shows that 
$$  \dot{B}_{2,2}^{\rho}=\dot{H}_{2}^{\rho}=\dot{H}^{\rho},\quad \dot{H}_r^{0}=L^r  $$
since the norms are equivalent. 

We recall the following embedding 
\begin{align}
\label{eq:B1large}
& \|f\|_{\dot{H}_{r}^{\rho}}\leq C \|f\|_{\dot{B}_{r,2}^{\rho}},\quad 2\leq r<\infty,\\
\label{eq:B1small}
&  \|f\|_{\dot{B}_{r,2}^{\rho}}  \leq C\|f\|_{\dot{H}_{r}^{\rho}},\quad  1< r\leq2,\\
\label{eq:B2}
&\|f\|_{\dot{B}_{r_1,s}^{\rho_1}}\leq C\|f\|_{\dot{B}_{r_2,s}^{\rho_2}},\quad 1\leq r_2\leq r_1\leq\infty,\ \frac{n}{r_2}-\frac{n}{r_1}=\rho_2-\rho_1,
\end{align}
which have been shown for example in \cite{velo95:Strichartz:NW}. In particular for the case when $1<r<2$ and $n=2$, we have the following trace inequality
\begin{align}
\label{eq:B3}
&\|f\|_{\dot{H}^{1-2/r}}\leq C\|f\|_{\dot{B}_{2,2}^{1-2/r}}\leq C\|f\|_{\dot{B}_{r,2}^{0}}\leq C\|f\|_{L^r}.
\end{align}
Moreover by using H\"older inequality, we also have  the following type of  Gagliardo-Nirenberg interpolation
\begin{align}
\label{eq:B4}
&\|f\|_{\dot{B}_{r,s}^{\rho}}\leq \|f\|_{\dot{B}_{r_1,s}^{\rho_1}}^{\theta}\|f\|_{\dot{B}_{r_2,s}^{\rho_2}}^{1-\theta},\quad  \frac{1}{r}=\frac{\theta}{r_1}+\frac{1-\theta}{r_2},\ \rho=\theta\rho_1+(1-\theta)\rho_2,\ \theta\in(0,1).
\end{align}
Now let's recall the classical Strichartz estimate in space dimension $2$ for the half wave equation. 
\begin{Lem}[Strichartz Estimate]
  \label{lem:Strichartz2}

  Assume that $2\leq q,\rho\leq \infty$ such that 
$ \frac{2}{q}+\frac{1}{\rho}\leq  \frac{1}{2}$. Let $b=1-\frac{2}{\rho} -\frac{1}{q}$ and $s\in\R$.  Then for function $f$ defined on $\R^2$ it holds that 
\begin{align*}
      &\|\exp(\pm it|\nabla |)f\|_{L_t^{q}\dot{B}_{\rho,2}^{-s}}\leq C\|f\|_{\dot{H}^{b-s}}.
    \end{align*}
\end{Lem}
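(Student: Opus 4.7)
The plan is to establish a frequency-localized version of the estimate first and then assemble the Besov norm via a Littlewood--Paley decomposition, exploiting the fact that $q\geq 2$.

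First I would perform a scaling reduction. Since $\exp(\pm it|\nabla|)$ is a Fourier multiplier and commutes with dilations up to a time-rescaling, if $f$ has frequency support in an annulus $\{|\xi|\sim 2^k\}$, then setting $\tilde f(x)=f(2^{-k}x)$ one computes
\begin{equation*}
\|\exp(\pm it|\nabla|)f\|_{L^q_tL^\rho_x}=2^{k(\frac{2}{2}-\frac{2}{\rho}-\frac{1}{q})}\|\exp(\pm is|\nabla|)\tilde f\|_{L^q_sL^\rho_x},\qquad \|f\|_{L^2}=2^{k}\|\tilde f\|_{L^2},
\end{equation*}
so it suffices to prove the unit-frequency estimate $\|\exp(\pm it|\nabla|)P_0 g\|_{L^q_tL^\rho_x}\leq C\|g\|_{L^2}$, which after scaling yields $\|\exp(\pm it|\nabla|)P_k f\|_{L^q_tL^\rho_x}\leq C2^{kb}\|P_k f\|_{L^2}$ with $b=1-\frac{2}{\rho}-\frac{1}{q}$.

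Second, I would prove the unit-frequency bound through the standard $TT^*$ (or Keel--Tao) recipe. The key input is the dispersive estimate
\begin{equation*}
\|\exp(\pm it|\nabla|)P_0 g\|_{L^\infty_x}\leq C|t|^{-\frac{1}{2}}\|g\|_{L^1_x},
\end{equation*}
which is obtained by stationary-phase analysis of $\int e^{i(x\cdot\xi\pm t|\xi|)}\chi(|\xi|)\hat g(\xi)\,d\xi$ on the unit annulus, where the phase $x\cdot\xi\pm t|\xi|$ has non-degenerate stationary points in one angular direction. Interpolating this with the trivial identity $\|\exp(\pm it|\nabla|)g\|_{L^2}=\|g\|_{L^2}$ and applying the Keel--Tao abstract Strichartz machinery yields the desired bound for every wave-admissible pair $(q,\rho)$ with $\frac{2}{q}+\frac{1}{\rho}\leq\frac{1}{2}$ and $q,\rho\geq 2$.

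Third, I would assemble the Besov estimate. Writing $f=\sum_k P_k f$, using Minkowski's inequality (which goes in the right direction because $q\geq 2$), one has
\begin{equation*}
\|\exp(\pm it|\nabla|)f\|_{L^q_t\dot B^{-s}_{\rho,2}}\leq \Bigl(\sum_{k\in\mathbb Z}2^{-2ks}\|\exp(\pm it|\nabla|)P_k f\|_{L^q_tL^\rho_x}^{2}\Bigr)^{1/2}.
\end{equation*}
Inserting the frequency-localized Strichartz bound and using Plancherel gives
\begin{equation*}
\|\exp(\pm it|\nabla|)f\|_{L^q_t\dot B^{-s}_{\rho,2}}\leq C\Bigl(\sum_k 2^{2k(b-s)}\|P_k f\|_{L^2}^{2}\Bigr)^{1/2}=C\|f\|_{\dot B^{b-s}_{2,2}}=C\|f\|_{\dot H^{b-s}},
\end{equation*}
as claimed.

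The main obstacle is the endpoint $\frac{2}{q}+\frac{1}{\rho}=\frac{1}{2}$, in particular the pair $(q,\rho)=(4,\infty)$, which is the Keel--Tao wave endpoint in $\R^{1+2}$. Working with the Besov target $\dot B^{-s}_{\infty,2}$ instead of $L^\infty$ is precisely what rescues us: after Littlewood--Paley localization each dyadic piece is handled at a non-endpoint pair, and the $\ell^2$ summation over $k$ reproduces the $\dot B^{-s}_{\infty,2}$ norm, bypassing the usual obstruction. Some care is also needed at $q=\infty$, where the Minkowski step becomes the trivial identity and one recovers the energy estimate $\|\exp(\pm it|\nabla|)f\|_{L^\infty_t\dot H^{b-s}}\leq C\|f\|_{\dot H^{b-s}}$ with $b=-\frac{2}{\rho}$, consistent with the Sobolev embedding $\dot H^{-2/\rho}\hookrightarrow \dot B^{0}_{\rho,2}$ for $2\leq\rho<\infty$.
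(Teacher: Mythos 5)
Your argument is correct and is essentially the standard proof of this lemma; the paper itself gives no proof, deferring to Ginibre--Velo and Keel--Tao, and what you wrote (dyadic localization, scaling to unit frequency, dispersive estimate plus $TT^*$, then Minkowski with $q\geq 2$ to assemble the $\dot B^{-s}_{\rho,2}$ norm) is exactly how those references proceed. Two arithmetic slips should be fixed, though neither affects the validity of the scheme: with $\tilde f(x)=f(2^{-k}x)$ one has $\|f\|_{L^2}=2^{-k}\|\tilde f\|_{L^2}$ and $\|\exp(\pm it|\nabla|)f\|_{L^q_tL^\rho_x}=2^{-k(\frac{1}{q}+\frac{2}{\rho})}\|\exp(\pm is|\nabla|)\tilde f\|_{L^q_sL^\rho_x}$, which together give the stated $2^{kb}$ localized bound (your two displayed identities, taken literally, would instead produce $2^{k(b-1)}$); and in the closing remark the value of $b$ at $q=\infty$ is $1-\frac{2}{\rho}$, not $-\frac{2}{\rho}$, matching the embedding $\dot H^{1-2/\rho-s}\hookrightarrow\dot B^{-s}_{\rho,2}$ in $\R^2$.
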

See proof for example in  \cite{velo95:Strichartz:NW} or \cite{Tao98:endStri}. Based on the classical Strichartz estimate, we first establish a type of Strichartz estimate  for linear  half wave equation in $\mathbb{R}^{1+2}$.

\begin{Lem}
  \label{lem:Strichartz1}
  Assume that $\frac{8}{5}<r<2$ and $\frac{2}{r}-\frac{1}{4}<s<1$. Let $u(t, x)$ be solution to the linear half wave equation
\[
(\partial_t\pm i|\nabla|)u=f
\]
   in $\mathbb{R}^{1+2}$.
Then there exists a constant $C$, independent of $f$ and $T$, such that 
\begin{align*}
      &\|u\|_{L^{4}([T,T+1];\dot{B}_{\infty,2}^{-s})}\leq C(\|u\|_{L^{\infty}([T,T+1];L^2)}+\|f\|_{L^{1}([T,T+1];L^r)}).
    \end{align*}
    Here the $L^r$ norm and $\dot{B}_{\infty,2}^{-s}$ is taken with respect to the variable $x\in \R^2$. 
\end{Lem}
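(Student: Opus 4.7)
The plan is to apply Duhamel's formula
\begin{equation*}
u(t) = e^{\mp i(t-T)|\nabla|} u(T) + \int_T^t e^{\mp i(t-\tau)|\nabla|} f(\tau)\,d\tau
\end{equation*}
and estimate the two contributions separately. The target space $L^4_t\dot B^{-s}_{\infty,2}$ sits at the endpoint of sharp admissibility in $\R^{1+2}$, while the data is only in $L^2$ and the forcing only in $L^r$ with $r<2$, so neither input can be fed directly into the Strichartz estimate of Lemma \ref{lem:Strichartz2}. The first move is therefore a Littlewood--Paley split at frequency scale one, writing every object as $P_{\leq 0}(\cdot)+P_{>0}(\cdot)$.

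For the homogeneous piece, I would treat the high-frequency part $P_{>0}u(T)$ by Lemma \ref{lem:Strichartz2} with $(q,\rho)=(4,\infty)$, whose admissibility constant is $b=3/4$, and observe that on $\{|\xi|>1\}$ the Fourier multiplier $|\xi|^{3/4-s}$ is bounded by $1$ since $s>2/r-\tfrac14>3/4$; this yields
\begin{equation*}
\|e^{\mp it|\nabla|}P_{>0}u(T)\|_{L^4_t\dot B^{-s}_{\infty,2}}\les \|P_{>0}u(T)\|_{\dot H^{3/4-s}}\les \|u(T)\|_{L^2}.
\end{equation*}
For the low-frequency part I would use Bernstein's inequality $\|P_k v\|_{L^\infty}\les 2^k\|P_k v\|_{L^2}$ in $\R^2$ together with the unitarity of $e^{\mp it|\nabla|}$ on $L^2$:
\begin{equation*}
\|P_{\leq 0}e^{\mp it|\nabla|}u(T)\|_{\dot B^{-s}_{\infty,2}}^2\les \sum_{k\leq 0} 2^{2k(1-s)}\|P_k u(T)\|_{L^2}^2\les \|u(T)\|_{L^2}^2,
\end{equation*}
whose convergence as $k\to-\infty$ requires $s<1$. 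Since both bounds are uniform in $t$, the $L^4_t$ norm on the interval $[T,T+1]$ of length one is dominated by the $L^\infty_t$ norm, giving a homogeneous contribution bounded by $\|u(T)\|_{L^2}\leq\|u\|_{L^\infty_{[T,T+1]}L^2}$.

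For the inhomogeneous piece I would first apply Minkowski's inequality to pull the $L^4_t\dot B^{-s}_{\infty,2}$ norm inside the $\tau$-integral, reducing matters to bounding $\|e^{\mp i(t-\tau)|\nabla|}f(\tau)\|_{L^4_t\dot B^{-s}_{\infty,2}}$ pointwise in $\tau$ by $\|f(\tau)\|_{L^r}$. Splitting $f(\tau)$ again into high and low frequencies, the high part is treated by Strichartz followed by Bernstein $\|P_k g\|_{L^2}\les 2^{k(2/r-1)}\|P_k g\|_{L^r}$:
\begin{equation*}
\|P_{>0}f(\tau)\|_{\dot H^{3/4-s}}^2\les \sum_{k>0}2^{-2k(s-2/r+1/4)}\|P_k f(\tau)\|_{L^r}^2\les \|f(\tau)\|_{L^r}^2,
\end{equation*}
where the summability over $k>0$ is precisely the role of the sharp lower bound $s>2/r-\tfrac14$. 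The low part admits a direct, time-independent Bernstein bound
\begin{equation*}
\|e^{\mp i(t-\tau)|\nabla|}P_{\leq 0}f(\tau)\|_{\dot B^{-s}_{\infty,2}}^2\les \sum_{k\leq 0}2^{2k(2/r-s)}\|P_k f(\tau)\|_{L^r}^2\les \|f(\tau)\|_{L^r}^2,
\end{equation*}
with convergence as $k\to-\infty$ following from $s<1<2/r$ (which uses $r<2$). Integrating over $\tau\in[T,T+1]$ finally yields the $\|f\|_{L^1_{[T,T+1]}L^r}$ term.

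The main obstacle is the loss of the classical $L^4_tL^\infty_x$ Strichartz estimate at the endpoint $(4,\infty)$ in $\R^{1+2}$ together with the fact that the forcing only lies in the subcritical Lebesgue class $L^r$ with $r<2$; both defects have to be absorbed by the negative Besov index $-s$. The Littlewood--Paley decomposition makes this bookkeeping transparent: $s>3/4$ (implied by $s>2/r-\tfrac14$) controls the high-frequency Strichartz piece for the data, $s<1$ controls the low-frequency piece for the data, and the sharp restriction $s>2/r-\tfrac14$ controls the high-frequency piece of the forcing. The hypothesis $r>\tfrac85$ is precisely what guarantees that the resulting window $(2/r-\tfrac14,\,1)$ for $s$ is nonempty.
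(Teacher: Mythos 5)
Your proof is correct, and it reaches the estimate by a genuinely different implementation of the same skeleton (Duhamel plus the Besov--Strichartz estimate of Lemma \ref{lem:Strichartz2} on a unit time interval). The paper never decomposes in frequency: for each of the two Duhamel pieces it chooses a non-endpoint exponent $q>4$ so that the required data regularity $b-s$ comes out to be exactly $0$ (homogeneous part) or $1-\tfrac{2}{r}$ (inhomogeneous part, then $\|f\|_{\dot{H}^{1-2/r}}\leq C\|f\|_{L^r}$ by \eqref{eq:B3}), and converts $L^q_t$ to $L^4_t$ by H\"older on $[T,T+1]$. You instead fix the endpoint pair $(q,\rho)=(4,\infty)$ with $b=\tfrac34$ and absorb the regularity mismatch by a Littlewood--Paley split at frequency one together with Bernstein, using the unit interval only through $L^4_t\leq L^\infty_t$ on the low frequencies. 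Both routes are valid within the hypotheses of Lemma \ref{lem:Strichartz2} (the pair $(4,\infty)$ satisfies $\tfrac{2}{q}+\tfrac{1}{\rho}\leq\tfrac12$ with equality, which the lemma permits in the Besov formulation); the paper's version is shorter, while yours isolates precisely which hypothesis controls which regime ($s>\tfrac34$ for high-frequency data, $s<1$ for low frequencies, $s>\tfrac{2}{r}-\tfrac14$ for the high-frequency forcing, and $r>\tfrac85$ for the window to be nonempty). Two small points you should make explicit: in the low-frequency forcing bound, Bernstein does not act directly on $\|P_kf(\tau)\|_{L^r}$ through the propagator; one should pass through $L^2$, i.e.\ $\|P_ke^{\mp i(t-\tau)|\nabla|}f(\tau)\|_{L^\infty}\leq C2^{k}\|P_kf(\tau)\|_{L^2}\leq C2^{2k/r}\|P_kf(\tau)\|_{L^r}$, using unitarity in the middle step --- this gives exactly the exponent you wrote. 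And in the two sums over $k>0$ you should either invoke the uniform bound $\|P_kf\|_{L^r}\leq C\|f\|_{L^r}$ against the geometric factors or the embedding $L^r\hookrightarrow\dot{B}^0_{r,2}$ of \eqref{eq:B1small}; either closes the argument.
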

\begin{proof}
By Duhamel's formula we have 
\begin{align*}
u(T+t)=\exp(\mp it|\nabla |)u(T)+\int_0^t\exp(\mp i(t-s)|\nabla |)f(T+s)ds=:u_1(T+t)+u_2(T+t).
\end{align*}
Apply Lemma \ref{lem:Strichartz2} with 
\[
\rho=\infty,\quad b=s,\quad  q=\frac{1}{1-s}.
\]
The assumption on $r$ and $s$ shows that $q>4$. Then by using 
 H\"older inequality, we deduce that  
\begin{align*}
      &\|u_1\|_{L^{4}([T,T+1];\dot{B}_{\infty,2}^{-s})}\leq\|u_1\|_{L_t^{q}\dot{B}_{\rho,2}^{-s}}=\|\exp(it|\nabla |)u(T)\|_{L_t^{q}\dot{B}_{\rho,2}^{-s}}\leq C\|u(T)\|_{L^2}.
\end{align*}
To bound $u_2$, we still rely on the  Lemma \ref{lem:Strichartz2}  with 
\[
\rho=\infty,\quad b=s+1-\frac{2}{r},\quad \frac{1}{q}=\frac{2}{r}-s. 
\]
Since $r<2$ and $s<1$, we have $0<\frac{2}{r}-s<\frac{1}{4}$, which indicates that $q>4$. 
Hence by using H\"older inequality and the embedding \eqref{eq:B3}, we can estimate that
\begin{align*}
      \|\exp(\mp i(t-s)|\nabla|)f(T+s)\|_{L^{4}([s,1];\dot{B}_{\infty,2}^{-s})} & \leq\|\exp(\mp i(t-s)|\nabla |)f(T+s)\|_{L_t^{q}\dot{B}_{\rho,2}^{-s}}\\ 
      &\leq  C\|f(T+s)\|_{\dot{H}^{b-s}}=C\|f(T+s)\|_{\dot{H}^{1-2/r}} \\ 
      &\leq C\|f(T+s)\|_{L^{r}},\quad 0<s<1.
    \end{align*} 
    Then   Minkowski inequality leads to 
    \begin{align*}
         \|u\|_{L^{4}([T,T+1];\dot{B}_{\infty,2}^{-s})} & \leq\|u_1\|_{L^{4}([T,T+1];\dot{B}_{\infty,2}^{-s})}+\|u_2\|_{L^{4}([T,T+1];\dot{B}_{\infty,2}^{-s})}\\
      & \leq C\|u(T)\|_{L^2}+ C\|f\|_{L^{1}([T,T+1];L^r)} \\ 
      &\leq C(\|u\|_{L^{\infty}([T,T+1];L^2)}+\|f\|_{L^{1}([T,T+1];L^r)}).
    \end{align*}
    This completes the proof for the Lemma.
\end{proof}

Next we adapt the above Strichartz estimate for the half wave equation to the linear transport equation for the scalar field under Coulomb gauge. 
\begin{Lem}
  \label{lem:Strichartz}
  Let $u=(u_0, u_1, u_2)$ be defined on $\R^{1+2}$. Denote 
  \[
f_0=\partial_tu_0-\partial_1u_1-\partial_2u_2,\quad f_{\mu \nu }=\partial_{\mu} u_{\nu}-\partial_{\nu} u_{\mu},\quad \mu, \nu=0, 1, 2.
  \]
 Assume that $\frac{8}{5}<r<2$ and $\frac{2}{r}-\frac{1}{4}<s<1$. Then it holds that 
\begin{align*}
      &\|u\|_{L^{4}([T,T+1];\dot{B}_{\infty,2}^{-s})}\leq C(\|u\|_{L^{\infty}([T,T+1];L^2)}+\|(f_0,f_{01},f_{02},f_{12})\|_{L^{1}([T,T+1];L^r)}).
    \end{align*}
    for some constant $C$ which is independent of $T$ and $u$. 
\end{Lem}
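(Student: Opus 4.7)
The plan is to reduce the vector first-order system to three scalar half-wave equations and apply Lemma \ref{lem:Strichartz1} component by component. I Hodge-decompose the spatial components via the Riesz transforms $R_j=\partial_j|\nabla|^{-1}$ by setting
\[
v := R_1 u_1 + R_2 u_2, \qquad w := R_1 u_2 - R_2 u_1.
\]
Since $R_1^2+R_2^2=-I$ as Fourier multipliers, this inverts to $u_1 = R_2 w - R_1 v$ and $u_2 = -R_1 w - R_2 v$. The $L^2$-boundedness of $R_j$ gives $\|v\|_{L^2}+\|w\|_{L^2}\lesssim\|u\|_{L^2}$, while the fact that $R_j$ is a bounded Fourier multiplier on each Littlewood--Paley block yields $\|u_1\|_{\dot B_{\infty,2}^{-s}}+\|u_2\|_{\dot B_{\infty,2}^{-s}}\lesssim \|v\|_{\dot B_{\infty,2}^{-s}}+\|w\|_{\dot B_{\infty,2}^{-s}}$.

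Next I derive the evolution equations. Using $\partial_1 u_1 + \partial_2 u_2 = |\nabla|v$, the defining relation for $f_0$ reads $\partial_t u_0 = |\nabla|v + f_0$. Differentiating $v$ in time and using $\partial_t u_j = \partial_j u_0 + f_{0j}$ together with the identity $R_1\partial_1+R_2\partial_2=-|\nabla|$ gives $\partial_t v = -|\nabla| u_0 + R_1 f_{01}+R_2 f_{02}$. Combining,
\[
(\partial_t\pm i|\nabla|)(u_0\pm iv) = f_0 \pm i(R_1 f_{01}+R_2 f_{02}),
\]
so $U_\pm:=u_0\pm iv$ each solve a scalar half-wave equation. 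For $w$, the cross-identity $R_1\partial_2 - R_2\partial_1\equiv 0$ (zero as a Fourier multiplier) forces the $u_0$-terms to cancel, leaving $\partial_t w = R_1 f_{02}-R_2 f_{01}$; and the curl constraint $|\nabla|w = \partial_1 u_2-\partial_2 u_1 = f_{12}$ converts $-i|\nabla|w$ into $-i f_{12}$ on the right-hand side, producing
\[
(\partial_t - i|\nabla|)w = (R_1 f_{02} - R_2 f_{01}) - i f_{12}.
\]

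Applying Lemma \ref{lem:Strichartz1} to each of the three scalar half-wave equations satisfied by $U_+$, $U_-$, and $w$ gives the desired $L^4([T,T+1];\dot B_{\infty,2}^{-s})$ bound in terms of the $L^\infty([T,T+1];L^2)$ norms of $U_\pm,w$ and the $L^1([T,T+1];L^r)$ norms of the corresponding source terms. Since Riesz transforms are bounded on $L^r$ for $1<r<\infty$, the sources are all dominated by $\|(f_0,f_{01},f_{02},f_{12})\|_{L^1 L^r}$; the $L^\infty L^2$ norms of $U_\pm,w$ are dominated by $\|u\|_{L^\infty L^2}$ by $L^2$-boundedness of $R_j$. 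This controls $u_0,v,w$ in $L^4\dot B_{\infty,2}^{-s}$, and the bounds for $u_1,u_2$ then follow from the Hodge-inverse formulas together with Riesz boundedness on $\dot B_{\infty,2}^{-s}$.

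The main subtlety I anticipate is the treatment of $w$. Algebraically $w=|\nabla|^{-1}f_{12}$, which suggests the naive estimate $\|w\|_{\dot B_{\infty,2}^{-s}}\lesssim\|f_{12}\|_{\dot B_{\infty,2}^{-s-1}}$; this cannot be controlled by $\|f_{12}\|_{L^1 L^r}$ for the $L^4$-in-time norm under the parameter range $s>2/r-1$, essentially because of a low-frequency obstruction to the embedding $L^r\hookrightarrow\dot B_{\infty,2}^{-s-1}$ and a mismatch between the $L^1_t$ source and the $L^4_t$ target norm. The crucial observation is that $w$ nevertheless solves the genuine half-wave equation displayed above, thanks to the hidden cancellation $R_1\partial_2-R_2\partial_1\equiv 0$, and one can therefore apply the dispersive Strichartz estimate to $w$ directly rather than relying on the algebraic identity.
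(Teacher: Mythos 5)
Your proposal is correct, and for the divergence part it coincides with the paper's argument: the same Riesz-transform Hodge decomposition (your $v$ is the paper's $h_1$ up to sign, your $w$ is its $h_2$), the same reduction of $(u_0,v)$ to the pair of scalar half-wave equations $(\partial_t\mp i|\nabla|)(u_0\pm iv)=f_0\pm i(R_1f_{01}+R_2f_{02})$, and the same invocation of Lemma \ref{lem:Strichartz1} plus $L^2$/$L^r$/Besov boundedness of the Riesz transforms. Where you genuinely diverge is the curl component $w$. You exploit the cancellation $R_1\partial_2-R_2\partial_1\equiv 0$ to get $\partial_t w=R_1f_{02}-R_2f_{01}$, combine it with the constraint $|\nabla|w=f_{12}$ into the inhomogeneous half-wave equation $(\partial_t-i|\nabla|)w=(R_1f_{02}-R_2f_{01})-if_{12}$, and apply Lemma \ref{lem:Strichartz1} a third time; the source is controlled in $L^1_tL^r$ by $\|(f_{01},f_{02},f_{12})\|_{L^1L^r}$, so this closes. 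The paper instead never writes an evolution equation for $h_2$: it uses only the elliptic identity $|\nabla|h_2=f_{12}$, the trace embedding $\|h_2\|_{\dot H^{2-2/r}}\lesssim\|f_{12}\|_{L^r}$, the Besov interpolation $\|h_2\|_{\dot B^{-s}_{\infty,2}}\lesssim\|h_2\|_{L^2}^{1-\theta}\|h_2\|_{\dot H^{2-2/r}}^{\theta}$ with $\theta=\frac{1-s}{2-2/r}<\frac14$, and H\"older in time (using $1/\theta>4$) to convert the $L^\infty_tL^2\times L^1_t L^r$ control into the $L^4_t\dot B^{-s}_{\infty,2}$ bound. So your worry that the ``algebraic'' route fails is slightly overstated --- the low-frequency and time-integrability obstructions you point out are exactly what the paper's interpolation against $\|h_2\|_{L^\infty_tL^2}$ repairs --- but your dispersive treatment of $w$ is a valid and arguably cleaner alternative: it uses one more application of the Strichartz lemma (and the extra sources $f_{01},f_{02}$, which are available anyway) in exchange for dispensing with the interpolation/H\"older step entirely.
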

 \begin{proof}
 To reduce this lemma to the previous lemma, we rely on the Riesz transform
  $$R_j f=\mathcal{F}^{-1}(-i\xi_j |\xi|^{-1}\widehat{f}(\xi) ), j=1,2.$$
  In particular by definition we have 
  $$|\nabla |R_j f=-\partial_j f, \quad R_1^2f+R_2^2f=-f.$$
  Moreover since the operators $R_1,R_2,|\nabla |,\partial_1,\partial_2$ are commutative, we deduce that 
  $$ R_1\partial_1f+R_2\partial_2f=|\nabla|f .$$
  Regarding the Riesz transform $R_j$, it  is bounded on $L^r$ for $1<r<\infty$ (see for example chapter one in \cite{stein:harmonic}). As the Besov space $\dot{B}_{p,q}^{s} $ is homogeneous, the Riesz transform is also bounded on it for $1\leq p\leq\infty$ (it suffices to consider the case when $\hat{f}$ is supported on $|\xi|\in [\frac{1}{2}, 2]$). 
 
Now let 
 $$h_1=R_1u_1+R_2u_2, \quad h_2=-R_1u_2+R_2u_1.$$
Then we get 
 \begin{align*} 
 u_1=-R_1h_1-R_2h_2, &\quad  u_2=-R_2h_1+R_1h_2, \\ 
  |\nabla|h_1=-\partial_1u_1-\partial_2u_2,&\quad 
|\nabla |h_2=\partial_1u_2-\partial_2u_1=f_{12}.
\end{align*}
In particular, we derive that 
$$ \partial_tu_0+|\nabla |h_1=f_0.$$
On the other hand, we compute that 
\begin{align*}
\partial_t h_1=R_1\partial_t u_1+R_2 \partial_t u_2= R_1(\partial_1u_0+f_{01})+R_2(\partial_2u_0+f_{02})=|\nabla |u_0+f_3,
\end{align*}
in which we denote $f_3=R_1 f_{01}+R_2 f_{02}.$ Therefore we conclude that 
$$(\partial_t \mp i|\nabla|)(u_0\pm i h_1)=f_0\pm if_3.  $$
Lemma \ref{lem:Strichartz1} then implies that 
 \begin{align*}
      &\|u_0\pm ih_1\|_{L^{4}([T,T+1];\dot{B}_{\infty,2}^{-s})}\leq C(\|u_0\pm ih_1\|_{L^{\infty}([T,T+1];L^2)}+\|f_0\pm if_3\|_{L^{1}([T,T+1];L^r)}).
    \end{align*}
Since the Riesz transforms  $R_1,R_2$ are bounded on $L^2$ and $L^r$, we therefore conclude that 
     \begin{align*}
      &\|(u_0,h_1)\|_{L^{4}([T,T+1];\dot{B}_{\infty,2}^{-s})} \\ 
      &\leq C(\|(u_0,h_1)\|_{L^{\infty}([T,T+1];L^2)}+\|(f_0,f_3)\|_{L^{1}([T,T+1];L^r)}) \\ 
      & \leq C(\|(u_0,R_1 u_1+R_2 u_2)\|_{L^{\infty}([T,T+1];L^2)}+\|(f_0, R_1 f_{01}+R_2 f_{02})\|_{L^{1}([T,T+1];L^r)}) \\ 
      &\leq C(\|(u_0,u_1,u_2)\|_{L^{\infty}([T,T+1];L^2)}+\|(f_0,f_{01},f_{02})\|_{L^{1}([T,T+1];L^r)}).
    \end{align*}
Now it  remains to estimate $h_2$. Recall that $ |\nabla |h_2=f_{12}$. By using the embedding   \eqref{eq:B3}, we obtain that 
\begin{align*}
      &\|h_2\|_{\dot{H}^{2-2/r}}= \||\nabla |h_2\|_{\dot{H}^{1-2/r}}= \|f_{12}\|_{\dot{H}^{1-2/r}}\leq C\|f_{12}\|_{L^{r}}.
    \end{align*}
    Since   the Riesz transforms  $R_1,R_2$ are bounded on $L^2$, we then conclude that 
    \begin{align*}
      &\|h_2\|_{L^{\infty}([T,T+1];L^2)}= \|-R_1u_2+R_2u_1 \|_{L^{\infty}([T,T+1];L^2)}\leq C\|(u_1,u_2)\|_{L^{\infty}([T,T+1];L^2)}.
    \end{align*}
The assumptions on $r$ and  $s$ show that 
$$0<1-s<1-(\frac{2}{r}-\frac{1}{4})=\frac{5}{4}-\frac{2}{r}<2-\frac{2}{r}.$$
Hence by using the embedding  \eqref{eq:B2}, \eqref{eq:B4} and \eqref{eq:B1small}, we obtain that 
\begin{align*}
      &\|h_2\|_{\dot{B}_{\infty,2}^{-s}}\leq C\|h_2\|_{\dot{B}_{2,2}^{1-s}}\leq C\|h_2\|_{\dot{B}_{2,2}^{0}}^{1-\theta}\|h_2\|_{\dot{B}_{2,2}^{2-2/r}}^{\theta}\leq C\|h_2\|_{L^2}^{1-\theta}\|h_2\|_{\dot{H}^{2-2/r}}^{\theta},
    \end{align*}
    in which 
    $$ 0<\theta=\frac{1-s}{2-\frac{2}{r}}<\frac{\frac{5}{4}-\frac{2}{r} }{2-\frac{2}{r}}\leq \frac{1}{4}. $$  
   Then by using H\"older inequality we can show that 
   \begin{align*}
      \|h_2\|_{L^{4}([T,T+1];\dot{B}_{\infty,2}^{-s})}&\leq\|h_2\|_{L^{1/\theta}([T,T+1];\dot{B}_{\infty,2}^{-s})} \\ 
      &\leq C\|h_2\|_{L^{\infty}([T,T+1];L^2)}^{1-\theta}\|h_2\|_{L^{1}([T,T+1];\dot{H}^{2-2/r})}^{\theta}\\
      &\leq  C\|(u_1,u_2)\|_{L^{\infty}([T,T+1];L^2)}^{1-\theta}\|f_{12}\|_{L^{1}([T,T+1];L^{r})}^{\theta} \\ 
      & \leq C(\|(u_1,u_2)\|_{L^{\infty}([T,T+1];L^2)}+\|f_{12}\|_{L^{1}([T,T+1];L^{r})}).
    \end{align*}
    Finally as $u_1=-R_1h_1-R_2h_2,$ $u_2=-R_2h_1+R_1h_2$ and $R_1,R_2$ are bounded on $\dot{B}_{\infty,2}^{-s}$, we conclude that 
    \begin{align*}
      \|(u_0,u_1,u_2)\|_{L^{4}([T,T+1];\dot{B}_{\infty,2}^{-s})} & \leq C\|(u_0,h_1,h_2)\|_{L^{4}([T,T+1];\dot{B}_{\infty,2}^{-s})}\\ 
      &\leq C(\|(u_0,u_1,u_2)\|_{L^{\infty}([T,T+1];L^2)}+\|(f_0,f_{01},f_{02},f_{12})\|_{L^{1}([T,T+1];L^r)}).
    \end{align*}
    This completes the proof.
     \end{proof}

With the above preparations, we are ready to show that the second order energy grows at most polynomially. For simplicity in the following, we allow the implicit constant in $B\les C$ also rely on the initial conformal energy $ \mathcal{E}_{0, 2}$ and the first order initial energy $\mathcal{E}_{1, 0}$.

  \begin{Prop}
  \label{prop:PD:H2}
The second order energy of the solution $\phi$ grows at most polynomially
  \begin{align*}
   \|D_j D_k\phi\|_{L^2(\mathbb{R}^2)}&\les  (1+t)^{5},\quad j, k=1, 2.
 \end{align*}
  \end{Prop}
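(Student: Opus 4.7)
My plan is to derive a differential inequality for $\|DD\phi\|_{L^2}^2$ by commuting the equation with a covariant derivative and performing the standard energy estimate, then close it using Lemma \ref{lem:Strichartz} together with the decay already established. Throughout, implicit constants may depend on $p$, $\mathcal{E}_{0,2}$ and $\mathcal{E}_{1,0}$.

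\textbf{Step 1: commuted equation.} Starting from the identity (already derived in the text preceding the proposition)
\begin{equation*}
\Box_A D_\mu\phi = 2iF_{\nu\mu}D^\nu\phi + i(\pa^\nu F_{\nu\mu})\phi + D_\mu(|\phi|^{p-1}\phi),
\end{equation*}
and using the formula for $\pa^\nu F_{\nu\mu}$ which has the schematic form $|D\phi|^2 + |\phi|^2 F \sim |D\phi|^2 + |\phi|^3|D\phi|$, we get pointwise
\begin{equation*}
|\Box_A D_\mu\phi| \les |\phi||D\phi|^2 + |\phi|^3|D\phi| + |\phi|^{p-1}|D\phi|.
\end{equation*}

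\textbf{Step 2: energy estimate for $D\phi$.} The standard $\pa_t$-energy identity for $\Box_A$ applied to $\psi_\mu = D_\mu\phi$ gives
\begin{equation*}
\frac{d}{dt}\sum_\mu \int_{\R^2} |D\psi_\mu|^2\,dx \les \|\Box_A D\phi\|_{L^2}\,\|DD\phi\|_{L^2}.
\end{equation*}
The ``easier'' contributions $|\phi|^3|D\phi|$ and $|\phi|^{p-1}|D\phi|$ are controlled in $L^2_x$ by H\"older together with the $L^\infty$ bound on $\phi$ obtained from the first order weighted energy of Proposition \ref{prop:phi:L2:1} and Lemma \ref{lem:log:Sob:w} (using a bootstrap assumption $\|DD\phi\|_{L^2}\les(1+t)^M$); both yield at most polynomial growth factors.

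\textbf{Step 3 (main step): the cubic term via Strichartz.} The truly dangerous nonlinearity is $F_{\nu\mu}D^\nu\phi \sim |\phi||D\phi|^2$, which is not amenable to a pure energy treatment. I fix the Coulomb gauge $\pa^k A_k = 0$, so that $A$ is determined elliptically from $\phi$ via the Chern--Simons constraint $F_{\mu\nu} = \varepsilon_{\mu\nu\ga}J[\phi]^\ga$. Viewing $u_\mu = D_\mu\phi$ as the unknown, the auxiliary quantities
\begin{equation*}
f_0 = \pa_t u_0 - \pa^k u_k, \qquad f_{\mu\nu} = \pa_\mu u_\nu - \pa_\nu u_\mu
\end{equation*}
required by Lemma \ref{lem:Strichartz} reduce, modulo harmless gauge corrections, to $|\phi|^{p-1}\phi$, $F\phi$ and $A\cdot D\phi$. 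The potential energy decay of Proposition \ref{prop:td:2dCSH}, the first order weighted energy of Proposition \ref{prop:phi:L2:1}, and standard elliptic bounds for $A$ then place $(f_0, f_{\mu\nu})$ in $L^1_{[T,T+1]}L^r_x$ with polynomial-in-$T$ norm for a suitable $r \in (8/5,2)$. Lemma \ref{lem:Strichartz} thus gives
\begin{equation*}
\|D\phi\|_{L^4([T,T+1];\dot{B}_{\infty,2}^{-s})} \les (1+T)^{\alpha_1}
\end{equation*}
for admissible $s \in (3/4,1)$ and some $\alpha_1 = \alpha_1(p)$. Combined with the Gagliardo--Nirenberg interpolation in Besov spaces sketched in the introduction,
\begin{equation*}
\||\phi||D\phi|^2\|_{L^2_x} \les \|D\phi\|_{\dot{B}_{\infty,2}^{-s}}^{(2+2\epsilon)/(2+\epsilon)}\,\|DD\phi\|_{L^2}^{2s(1+\epsilon)/(2+\epsilon)},
\end{equation*}
with $\epsilon > 0$ small and $s(1+\epsilon) < 1$, this yields the needed $L^1_t L^2_x$ control on the cubic term.

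\textbf{Step 4: closing the bootstrap.} Combining Steps 2 and 3 produces a differential inequality of the form
\begin{equation*}
\frac{d}{dt}\|DD\phi\|_{L^2}^2 \les (1+t)^{\alpha}\,\|DD\phi\|_{L^2}^{2(1-\gamma)}, \qquad \gamma = 1 - \frac{s(1+\epsilon)}{2+\epsilon} > 0.
\end{equation*}
A Gronwall-type integration then gives $\|DD\phi\|_{L^2} \les (1+t)^{C_0(p)}$, and choosing $s, \epsilon$ near the lower end of their admissible range one verifies $C_0(p) \le 5$ uniformly for $1 < p \le 5$, yielding the claim. The main obstacle is Step 3, where one must fix a gauge, invoke elliptic theory for $A$, and carefully match the unit-time-interval structure of Lemma \ref{lem:Strichartz} with the global-in-time decay already obtained.
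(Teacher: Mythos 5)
Your proposal follows essentially the same route as the paper's proof: Coulomb gauge, the first-order transport system for $D\phi$ fed into Lemma \ref{lem:Strichartz}, Besov-space Gagliardo--Nirenberg interpolation for the cubic term $|\phi||D\phi|^2$, and a sublinear Gronwall argument. One point must be tightened, however: the sources $(f_0,f_{\mu\nu})$ are in fact \emph{uniformly} bounded in $L^1([T,T+1];L^r)$ --- this follows from energy conservation, Gagliardo--Nirenberg and the elliptic estimates for $A$ alone, with no growth in $T$ --- so that $\alpha_1=0$ in your notation; this uniformity is precisely what makes the final exponent come out as $\frac{2+\epsilon}{2+\epsilon-2s(1+\epsilon)}\approx 4<5$ for $s$ slightly above $\frac34$. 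If the Strichartz norm were allowed to grow polynomially, as your Step 3 permits, the Gronwall step would produce a larger power and your assertion that $C_0(p)\le 5$ would not follow without an actual computation. A second, minor difference: the paper controls the terms $|D\phi||\phi|^{q}$ ($q=4$ or $p-1$) by interpolating $\|D\phi\|_{L^{2+\epsilon}}$ against $\|DD\phi\|_{L^2}^{\epsilon/(2+\epsilon)}$ rather than through an $L^\infty$ bootstrap via the logarithmic Sobolev lemma; your variant is workable but introduces an unnecessary circularity that the paper's treatment avoids, and the resulting small power of $\|DD\phi\|_{L^2}$ must still be tracked in the final integral inequality.
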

\begin{remark}
 We remark here that Huh in \cite{Huh07:CSH} obtained  control of $ H^2$ norm of the solution but the norm may grow exponentially. 
\end{remark}

\begin{proof}
To make use of the Strichartz estimate established above, we work under a particular gauge. Choose the Coulomb gauge condition 
 $$ \partial_1A_1+\partial_2A_2=0,$$
under which the connection field $A$ verifies the equation
\begin{align*}
&\Delta A_0=-\partial_1F_{01}-\partial_2F_{02},\quad \Delta A_1=-\partial_2F_{12},\quad \Delta A_2=\partial_1F_{12}.
\end{align*}
By the Gagliardo-Nirenberg inequality and in view of the energy conservation \eqref{eq:E3}, we first have the bound  
 \begin{align*}
&\|\phi\|_{L^{q_1}}^{q_1} \les \|\phi\|_{L^{p+1}}^{p+1}\|D\phi\|_{L^2}^{q_1-p-1}\les 1,\quad \forall p+1\leq q_1<+\infty.
\end{align*}
Then by using  the standard elliptic $L^p$ estimate (see for example \cite{elliptic}),
 we derive that 
  \begin{align*}
&\|A\|_{L^q}\les \|\nabla A\|_{L^r}\les \|F\|_{L^r}\les \|\phi\|_{L^q} \|D\phi\|_{L^2}\les 1,\quad \forall\ 1<r<2,\quad \forall p+1\leq q <\infty,\quad  \frac{1}{q}=\frac{1}{r}-\frac{1}{2}.
\end{align*}
This shows that  
\begin{align*}
 &\||A||D\phi|\|_{L^r}\leq \|A\|_{L^q}\|D\phi\|_{L^2}\les 1,\quad \forall p+1\leq q<+\infty,\quad  \frac{1}{q}=\frac{1}{r}-\frac{1}{2}, \\ 
 &\|\phi F\|_{L^r}\leq \| |D\phi| |\phi|^2\|_{L^r}\leq \|D\phi\|_{L^2}\|\phi\|_{L^{2q}}^2 \les 1,\\ 
 & \| |\phi|^{p-1}\phi\|_{L^r}=\|\phi\|_{L^{pr}}^p\les 1, \quad \frac{p+1}{p}<r<2. 
 \end{align*}
We now write the system \eqref{eq:CSH:p:2d} as transport equations for $D\phi $ 
\begin{align*}
&D_t^2\phi=D_1^2\phi+D_2^2\phi-|\phi|^{p-1}\phi,\ D_\mu D_\nu\phi=D_\nu D_\mu \phi+iF_{\mu \nu }\phi,\Leftrightarrow\\
&\partial_t D_t\phi-\partial_1D_1\phi-\partial_2D_2\phi=-iA_0D_t\phi+iA_1D_1\phi+iA_2D_2\phi-|\phi|^{p-1}\phi:=f_0,\\
&\partial_\mu D_\nu \phi-\partial_\nu D_\mu \phi=-iA_\mu D_\nu\phi+iA_\nu D_\mu \phi+iF_{\mu \nu }\phi:=f_{\mu \nu }.
\end{align*}
The above argument shows that 
\begin{align*}
\|(f_0, f_{01}, f_{02}, f_{12})\|_{L^r}\les 1, \quad \max\{\frac{p+1}{p}, \frac{2(p+1)}{p+3}\}<r<2. 
\end{align*}
Now let  $r$ be sufficiently close to $2$ and $s$  slightly larger than $\frac{3}{4}$. The Strichartz estimate of Lemma \ref{lem:Strichartz} then shows that 
 \begin{align*}
\|D\phi\|_{L^{4}([T,T+1];\dot{B}_{\infty,2}^{-s})}\les 1,\quad \forall T\geq 0. 
\end{align*}
In particular we conclude that 
\begin{align}
\label{eq:dphiint}
  \int_0^{T}\|D\phi\|_{\dot{B}_{\infty,2}^{-s}}^{q_2} dt\les T+1 ,\quad \forall  0<q_2<4,\quad  T\geq 0.
\end{align}
Let $\epsilon>0$ be sufficiently small such that $s(1+\epsilon)<1$. By using the embedding  \eqref{eq:B1small} and \eqref{eq:B4}, we can show that 
\begin{align*}
 \||D\phi|^2 |\phi |\|_{L^2} &\leq \|D\phi\|_{L^{ 4+2\epsilon}}^2\|\phi\|_{L^{\frac{2(2+\epsilon)}{\epsilon}}} \les   \|D\phi\|_{\dot{B}_{4+2\epsilon,2}^{0}}^2\\
 &\les  \|D\phi \|_{\dot{B}_{\infty,2}^{-s}}^{\frac{2+2\epsilon}{2+\epsilon}}\|D\phi\|_{\dot{B}_{2,2}^{s(1+\epsilon)}}^{\frac{2}{2+\epsilon} } \\ 
 & \les  \|D\phi \|_{\dot{B}_{\infty,2}^{-s}}^{\frac{2+2\epsilon}{2+\epsilon}} \|D\phi\|_{\dot{H}^{s( 1+\epsilon)}}^{ \frac{2}{2+\epsilon} }\\ 
 & \les \|D\phi\|_{\dot{B}_{\infty,2}^{-s}}^{ \frac{2+2\epsilon}{2+\epsilon} }\|D\phi\|_{L^2}^{\frac{2(1-s(1+\epsilon ))}{2+\epsilon}   }\|D\phi\|_{\dot{H}^{1}}^{ \frac{2 s(1+\epsilon )}{2+\epsilon}  } \\ 
 & \les \|D\phi\|_{\dot{B}_{\infty,2}^{-s}}^{ \frac{2+2\epsilon}{2+\epsilon} } \|D\phi\|_{\dot{H}^{1}}^{ \frac{2 s(1+\epsilon )}{2+\epsilon}  }.
\end{align*}
 We also have\begin{align*}
  &\|D\phi\|_{\dot{H}^{1}}=\|\nabla  D\phi\|_{L^2}\leq \|D^2\phi\|_{L^2}+\||A||D\phi|\|_{L^2}\leq \|D^2\phi\|_{L^2}+\|A\|_{L^{p+1}}\|D\phi\|_{L^{2(p+1)/({p-1})}}\\
  &\|D\phi\|_{L^{2(p+1)/({p-1})}}\lesssim\|\nabla  |D\phi|\|_{L^2}+\|D\phi\|_{L^2}\leq\|D^2\phi\|_{L^2}+\|D\phi\|_{L^2},\quad \|A\|_{L^{p+1}}\lesssim1.
\end{align*}Thus $\|D\phi\|_{\dot{H}^{1}}\lesssim\|D^2\phi\|_{L^2}+\|D\phi\|_{L^2} $ and\begin{align}\label{D1}
 \||D\phi|^2 |\phi |\|_{L^2}
 & \les \|D\phi\|_{\dot{B}_{\infty,2}^{-s}}^{ \frac{2+2\epsilon}{2+\epsilon} } \|D\phi\|_{\dot{H}^{1}}^{ \frac{2 s(1+\epsilon )}{2+\epsilon}  }\les \|D\phi\|_{\dot{B}_{\infty,2}^{-s}}^{ \frac{2+2\epsilon}{2+\epsilon} } (\|D^2\phi\|_{L^2}+\|D\phi\|_{L^2})^{ \frac{2 s(1+\epsilon )}{2+\epsilon}  }.
\end{align}
Note that 
\begin{align*}
|\Box_A D\phi | & \leq 2|F||D\phi|+(|D\phi|^2+|F||\phi|^2)|\phi|+p |D\phi||\phi |^{p-1} \\ 
&\leq 3 |D\phi|^2|\phi|+|D\phi|(|\phi|^4+p|\phi|^{p-1}).
\end{align*}
The energy identity \eqref{eq:energy:id} (without the nonlinearity $|\phi|^{p-1}\phi$) applied to $D\phi$ for the vector field $X=\pa_t$ and the function $\chi=0$  then gives
\begin{align*}
\int_{\mathbb{R}^2} |D D_\mu\phi|^2 dx = \int_{\mathbb{R}^2} |D D_\mu\phi(0, x)|^2 dx-2\int_0^t \int_{\mathbb{R}^2}  \l \Box_A D_\mu\phi,  D_t D_\mu \phi \r - F_{\ga 0} J^\ga[D_\mu\phi]  dxdt.
\end{align*}
By definition we can bound 
\begin{align*}
|F_{\ga 0} J^\ga[D_\mu\phi]|\leq |D\phi||\phi| |D\phi||DD_\mu \phi |.
\end{align*}
We then derive the energy estimate for $D\phi $ 
\begin{align*}
\frac{d}{dt}\|D D_\mu\phi\|_{L^2}^2&\leq 2\int_{\mathbb{R}^2} |DD_\mu\phi|(4 |D\phi|^2|\phi|+|D\phi|(|\phi |^4+p|\phi |^{p-1})) dx\\
&\les \| DD_\mu\phi \|_{L^2} \| |D\phi|^2 |\phi |+ |D\phi| (|\phi|^4+|\phi |^{p-1}) \|_{L^2},
\end{align*}
which in particular implies that 
\begin{align*}
 \|D D_\mu\phi\|_{L^2} \les 1+\int_0^t   \| |D\phi|^2 |\phi |+ |D\phi| (|\phi|^4+|\phi |^{p-1}) \|_{L^2} dt.
\end{align*}
Now by using 
 Gagliardo-Nirenberg inequality and for sufficiently small $\epsilon$ depending only on $p$, we can bound that (for $q=4$ or $p-1$)
\begin{align*}
\||D\phi|\cdot |\phi|^{q}\|_{L^2} &\leq \|D\phi\|_{L^{2+\ep}}\|\phi\|_{L^{\frac{2(2+\ep)q}{\ep}}}^{q}\\
&\les \|D\phi\|_{L^2}^{\frac{2}{2+\ep}}\|DD\phi\|_{L^2}^{\frac{\ep}{2+\ep}} \|\phi\|_{L^{p+1}}^{\frac{(p+1)\ep}{2(2+\ep)}}\|D\phi\|_{L^2}^{q-\frac{(p+1)\ep}{2(2+\ep)}}\\
&\les \|DD\phi\|_{L^2}^{\frac{\ep}{2+\ep}}.
\end{align*}
 Combining the previous bound  \eqref{D1}  for $|D\phi|^2 |\phi|$, we then obtain 
\begin{align*}
 \|DD\phi\|_{L^2}&\les 1+\int_0^t   (\| |D\phi|^2 |\phi |\|_{L^2}+ \||D\phi|\cdot |\phi|^{p-1}\|_{L^2}+\||D\phi|\cdot |\phi|^{4}\|_{L^2})dt\\
 &\les 1+\int_0^t   \|D\phi\|_{\dot{B}_{\infty,2}^{-s}}^{\frac{2+2\epsilon }{2+\epsilon }}(\|D^2\phi\|_{L^2}+\|D\phi\|_{L^{2}})^{\frac{2}{2+\epsilon}s(1+\epsilon)}+\|DD\phi\|_{L^2}^{\frac{\ep}{2+\ep}}dt.
\end{align*}
In view of the bound \eqref{eq:dphiint} with $q_2=\frac{2+2\epsilon}{2+\epsilon }$, we then conclude that 
\begin{align*}
\|DD\phi\|_{L^2}\les 1+ (1+t)^{\frac{2+\epsilon}{2+\epsilon-2s(1+\epsilon) }}.
\end{align*}
The proposition then follows by taking $s=\frac{3}{4}+\epsilon $  with  $\epsilon$  sufficiently small.

 \end{proof}

Consequently, in view of the logarithmic Sobolev inequality, we now show that the solution decays inverse polynomially in time for all $p>1$.
\begin{Prop}
\label{prop:ptdecay:allp}
For $1<p\leq 5$, the solution $\phi$ to \eqref{eq:CSH:p:2d} verifies the following decay estimate
\begin{align*}
|\phi|\leq C \sqrt{\ln(2+t)}
(1+t)^{-\frac{p-1}{8} }
\end{align*}
for some constant $C$ depending on $\mathcal{E}_{0, 2}$ and $\mathcal{E}_{1, 0}$. 
\end{Prop}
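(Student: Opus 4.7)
The plan is to apply the Br\'ezis--Gallouet--Wainger type logarithmic Sobolev embedding from Lemma \ref{lem:log:Sobolev} (or its refinement Lemma \ref{lem:log:Sob:w}) to the nonnegative scalar $|\phi|$. Working with $|\phi|$ is essential: the diamagnetic inequalities $\big|\nabla|\phi|\big|\leq|D\phi|$ and $\big|\nabb|\phi|\big|\leq|\D\phi|$ convert the covariant estimates from Propositions \ref{prop:td:2dCSH} and \ref{prop:phi:L2:1} into ordinary $H^1$-type control of $|\phi|$, so that the connection $A$ never appears on the right-hand side of the embedding.

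Three inputs feed the log Sobolev. First, H\"older applied to the weighted potential energy of Proposition \ref{prop:td:2dCSH} gives $\|\phi\|_{L^2(B_R(x_0))}^2\les R^{2(p-1)/(p+1)}(1+t+|x_0|)^{-(p-1)/(p+1)}$ on any ball. Second, Proposition \ref{prop:phi:L2:1} yields the interior bound $\|\bar D\phi\|_{L^2(B_R(x_0))}^2\les(1+t)^{-(p-1)/2}$ for $B_R(x_0)\subset\{|x|\leq t/2\}$ (after extracting the $(1+|t-r|)^2$ weight), together with the scale-invariant global bound $\|\nabb|\phi|\|_{L^2(\R^2)}^2\leq\|\D\phi\|_{L^2}^2\les(1+t)^{-(p-1)/2}$. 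Third, Proposition \ref{prop:PD:H2} supplies the polynomial growth $\|\phi\|_{H^2}\les(1+t)^5$; since $H^2$ enters the embedding only inside a logarithm, this growth contributes at most a $\sqrt{\ln(2+t)}$ factor.

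To bound $|\phi(t,x_0)|$, I would split into two regimes. In the outer region $|x_0|\geq t/4$, I rescale $\tilde\phi(y)=|\phi(Ry)|$ with $R=|x_0|$ and apply the third (annular) form of Lemma \ref{lem:log:Sobolev}; since $\nabb$ is scale invariant in dimension two while $L^2$ mass rescales by $R^{-2}$, the product structure produces
\[
|\phi(t,x_0)|^2\les\|\tilde\phi\|_{H^1(\R^2)}\,\|(\tilde\phi,\nabb\tilde\phi)\|_{L^2(\R^2)}\,\ln(2+t)\les(1+t)^{-(p-1)/4}\ln(2+t).
\]
In the interior region $|x_0|<t/4$, I instead rescale by $R\sim t$ and apply the first (ball) form of Lemma \ref{lem:log:Sobolev} on a unit ball centered at the origin, now using the local $\bar D\phi$ estimate in place of the angular one. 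A short calculation shows that the resulting interior decay rate $\min((p-1)/2,(p+3)/(p+1))$ beats $(p-1)/4$ throughout $1<p\leq 5$, so combining the two regimes gives the uniform bound $|\phi(t,x_0)|^2\les(1+t)^{-(p-1)/4}\ln(2+t)$ and taking square roots yields the proposition.

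The principal obstacle is the polynomial growth of $\|\phi\|_{H^2}$ from Proposition \ref{prop:PD:H2}: it enters the embedding only inside the logarithm and therefore cannot destroy the polynomial decay, but it cannot be eliminated by the present method and is precisely what produces the unavoidable $\sqrt{\ln(2+t)}$ factor in the conclusion. A secondary technical point is the degeneracy of the $(1+|t-r|)$-weighted $\bar D\phi$ bound near the light cone $|x_0|\sim t$; this is exactly the region where the scale-invariant angular estimate on $\D\phi$ takes over through the third form of Lemma \ref{lem:log:Sobolev}, so the interior and outer arguments fit together with no gap.
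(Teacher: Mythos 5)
Your overall strategy coincides with the paper's (logarithmic Sobolev embedding fed by the weighted energy bounds of Propositions \ref{prop:td:2dCSH} and \ref{prop:phi:L2:1}, with the polynomial $H^2$ growth of Proposition \ref{prop:PD:H2} absorbed inside the logarithm, and a decomposition into an interior, an exterior and a near-light-cone regime), and your exponent bookkeeping at the $H^1$ level is consistent with the stated rate. But there is a genuine gap in the key technical choice: you apply Lemma \ref{lem:log:Sobolev} to $u=|\phi|$. Every form of that lemma carries $\|u\|_{H^2}$ inside the logarithm, and for $u=|\phi|$ this quantity is not controlled by anything you have. The diamagnetic inequality $|\nabla|\phi||\leq|\bar D\phi|$ is a first-order statement only; at second order $|\phi|$ is merely Lipschitz across the zero set of $\phi$ (near a nondegenerate zero $\nabla^2|\phi|$ blows up like $|x-x_0|^{-1}$, which is not locally square integrable in $\R^2$), so $\||\phi|\|_{H^2}$ may be infinite and the embedding returns nothing. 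Proposition \ref{prop:PD:H2} bounds the covariant derivatives $\|D_jD_k\phi\|_{L^2}$ of the complex field, which does not translate into a bound on $\nabla^2|\phi|$; there is no second-order diamagnetic inequality.

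This is precisely why the paper works with the gauge-invariant quantity $|\phi|^2$ instead of $|\phi|$: one has $\nabla^2|\phi|^2=2\langle D_j\phi,D_k\phi\rangle+2\langle\phi,D_jD_k\phi\rangle$, so $\|\nabla^2|\phi|^2\|_{L^2}$ is controlled by $\|\bar D\phi\|_{L^4}^2+\|\phi\|_{L^\infty}\|\bar D\bar D\phi\|_{L^2}$ and hence by Propositions \ref{prop:phi:L2:1} and \ref{prop:PD:H2}. The price of squaring is an extra factor $\|\phi\|_{L^\infty}$ in $\|\nabla|\phi|^2\|_{L^2}$, which the paper handles by first proving the crude bound $\|\phi\|_{L^\infty}\les\sqrt{\ln(2+t)}$ (via Lemma \ref{lem:log:Sob:w} applied to $|\phi|^2$) and, in the annular region, by a short bootstrap in $\kappa=\|\phi\|_{L^\infty(\{t/2\leq|x|\leq 3t/2\})}$; this is also why the final exponent is $(p-1)/8$ rather than the $(p-1)/4$ your first-power computation would suggest in the interior. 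To repair your argument you would either have to switch to $|\phi|^2$ as the paper does, or justify a version of the embedding with a fractional norm $H^{1+s}$, $s<\tfrac12$, inside the logarithm (or a regularization $\sqrt{|\phi|^2+\epsilon}$ with $\epsilon$ polynomially small in $t$), none of which is addressed in the proposal.
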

\begin{proof}
First of all, by using Lemma \ref{lem:log:Sob:w} we derive a rough bound for $|\phi|$. 
By considering the gauge invariant norm $|\phi|^2$ and the energy conservation, we first show that 
\begin{align*}
\int_{\R^2} |\nabla |\phi|^2|^2 dx=4\int_{\R^2} |\l \phi, \bar{D}\phi \r |^2 dx\leq 4 \|\phi\|_{L^\infty}^2 \int_{\R^2}|\bar{D}\phi |^2 dx\les  \|\phi\|_{L^\infty}^2.
\end{align*}
For the second order norm, in view of Proposition \ref{prop:PD:H2}, Proposition \ref{prop:phi:L2:1} and the Gagliardo-Nirenberg inequality, we can bound that 
\begin{align*}
\int_{\R^2}|\nabla^2 |\phi|^2|^2+|\phi|^4 dx &\les \int_{\R^2} |\bar{D}\phi|^4+|\phi|^2 (|\bar{D}\bar{D}\phi|^2+|\bar{D}\phi |^2)dx\\
&\les  \int_{\R^2} |\bar{D}\bar{D}\phi|^2 dx \int_{\R^2} |\bar{D}\phi|^2 dx +\|\phi\|_{L^\infty}^2 \int_{\R^2} |\bar{D}\bar{D}\phi|^2+|\phi|^2dx \\
&\les (1+t)^{10} (1+\|\phi\|_{L^\infty}^2). 
\end{align*}
Lemma \ref{lem:log:Sob:w} then implies that 
\begin{align*}
\||\phi |^2\|_{L^\infty}^2=\|\phi  \|_{L^\infty}^4 & \les  \| \nabla |\phi|^2\|_{L^2}^2 \max(10\ln (1+t)+\ln(1+\|\phi\|_{L^\infty}^2)-\ln \| \nabla |\phi |^2\|_{L^2}^2,1 )\\
&\les \| \phi \|_{L^\infty }^2 (\ln (2+t)+\ln(1+1/\|\phi\|_{L^\infty}) ), 
\end{align*}
which leads to the rough bound 
\[
\|\phi\|_{L^\infty}\les \sqrt{\ln(2+t)}.
\]
Next we improve this bound to time decay. Since  the estimate of the Proposition holds trivially when $t \leq 8$, for fixed $t >8$ and for the case when $|x|\leq \frac{t}{2}$, let
\begin{align*} 
\tilde{\phi}(x)=|\phi(t, tx)|^2.
\end{align*}
In view of the weighted energy estimate of Proposition \ref{prop:phi:L2:1}, we conclude that
\begin{align*}
\int_{|x|\leq \frac{3t}{4}}|\bar{D}\phi|^2dx\leq 4^2t^{-2}\int_{|x|\leq t} (t-r)^2 |\bar{D}\phi|^2 dx \les 
(1+t)^{\frac{1-p}{2}}.
\end{align*}
Note that
\begin{align*}
\pa_{j}\tilde{\phi}(x)&=2t\l \phi(t, tx), 
 D_j\phi (t, tx) \r, \\ 
   \pa_j\pa_k \tilde{\phi}(x)&=2t^2 \l D_k\phi(t, tx), D_j\phi (t, tx) \r+2t^2\l \phi(t, tx), D_k D_j\phi (t, tx) \r
\end{align*}
We therefore conclude that 
\begin{align*}
\int_{|x|\leq \frac{3}{4}}|\tilde{\phi}|^2+|\nabla\tilde{\phi}|^2dx & \les \int_{|x|\leq  \frac{3t}{4} }( t^{-2}|\phi|^2 +|\bar{D}\phi|^2) |\phi|^2 dx \\
&\les
\|\phi \|_{L^\infty}^2 (1+t)^{\frac{1-p}{2}} \\
&\les (1+t)^{\frac{1-p}{2}}\ln(1+t).
\end{align*}
By using Gagliardo-Nirenberg inequality, for the second order derivative, in view of the first order energy bound of Proposition \ref{prop:PD:H2} we estimate that 
\begin{align*}
\int_{|x|\leq \frac{3}{4} }|\nabla^2 \tilde{\phi}|^2 dx & \les t^2\int_{|x|\leq \frac{3t}{4} } |\bar{D} \phi|^4+ |\phi|^2 |\bar{D}^2\phi |^2 dx \\ 
& \les   t^2 \int_{|x|\leq \frac{3t}{4} } |\bar{D}\bar{D}\phi|^2dx \int_{|x|\leq \frac{3t}{4} } |\bar{D}\phi |^2dx + (1+t)^{12} \|\phi\|_{L^\infty}^2\\ 
&\les  (1+t)^{12} \ln(2+t).
\end{align*}
Then the logarithmic Sobolev embedding of Lemma \ref{lem:log:Sobolev} implies that 
\begin{align*}
\|\tilde{\phi}\|_{L^\infty(\{|x|\leq \frac{1}{2}\})}^2 =\|\phi\|^{4}_{L^\infty(\{|x|\leq \frac{t}{2}\})} & \les  \|\tilde\phi\|_{H^1(\{|x|\leq \frac{3}{4}\})}^2 \max (13\ln(1+t)-\ln \|\tilde\phi\|_{H^1(\{|x|\leq \frac{3}{4}\})}^2,1 ) \\
&\les (1+t)^{\frac{1-p}{2}} \ln(2+t)(1+\ln(1+t)), 
\end{align*}
from which we can conclude that 
\[
\|\phi\|_{L^\infty(\{|x|\leq \frac{t}{2}\}) } \les (1+t)^{\frac{1-p}{8}} \sqrt{\ln (1+t)} . 
\]
Here by our convention the implicit constant   relies on  the initial energy $\mathcal{E}_{0, 2}$ and $\mathcal{E}_{1, 0}$.

\bigskip 

Decay estimate for the case when  $|x|\geq \frac{3}{2}t$ is similar. Let $r_0=\frac{3t}{2}$ and
\begin{align*}
\psi(x)=|\phi(t, r_0x)|^2,\quad |x|\geq 1.
\end{align*}
For the $H^1$ norm we can estimate that 
\begin{align*}
\int_{|x|\geq \frac{5}{6} } |\psi|^2+|\nabla\psi|^2 &\les   r_0^{-2} \int_{\mathbb{R}^2} (|\phi|^2 +|(r-t)\bar{D}\phi|^2 )|\phi|^2 dx\\
&\les  r_0^{-2} (1+t)^{\frac{5-p}{2}} \ln (1+t) \\ 
& \les    (1+r_0)^{\frac{1-p}{2}} \ln (1+r_0).
\end{align*}
Similarly for the second order derivative of $\psi$, we estimate that 
\begin{align*}
&\int_{|x|\geq \frac{5}{6} }|\nabla^2 \psi|^2 dx  \leq r_0^2 \int_{|x|\geq \frac{5r_0}{6}} |\bar{D} \phi|^4+|\phi|^2|\bar{D}\bar{D}\phi |^2 dx  \\
 & \les r_0^2 \int_{|x|\geq \frac{5r_0}{6}} |\bar{D} \phi|^2  dx \int_{|x|\geq \frac{5r_0}{6}} (|\bar{D}\bar{D} \phi|^2+ |\bar{D} \phi|^2) dx+r_0^2    \ln (1+t) \int_{|x|\geq \frac{5r_0}{6}} |\bar{D}\bar{D} \phi|^2  dx\\
 &\les (1+r_0)^{12} \ln (e+r_0).  
\end{align*}
Therefore in view of  the logarithmic Sobolev embedding of Lemma \ref{lem:log:Sobolev}, we conclude  that
\begin{align*}
 \|\psi\|_{L^\infty(\{|x|\geq 1 \})}^2=\|\phi\|_{L^\infty(\{|x|\geq  r_0 \})}^4 & \leq \|\psi\|_{H^1(\{|x|\geq \frac{5 }{6}\})}^2\max(13\ln(1+r_0)-\ln \|\psi\|_{H^1(\{|x|\geq \frac{5 }{6}\})}^2,1 )\\
 &\les  \ln(1+r_0) (1+r_0)^{\frac{1-p}{2}} \ln(e+r_0). 
\end{align*}
This implies that 
\begin{align*}
\|\phi\|_{L^\infty(\{|x|\geq \frac{3}{2}t \})}=\|\phi\|_{L^\infty(\{|x|\geq  r_0 \})} \les  (1+r_0)^{\frac{1-p}{8}} \sqrt{\ln(1+r_0)}\les (1+t)^{\frac{1-p}{8}} \sqrt{\ln(1+t)}.
\end{align*}
Finally on the region near the light cone $\{\frac{1}{2}t\leq |x| \leq \frac{3}{2}t\}$, we still consider  the function $\widetilde{\phi}=|\phi(t, tx)|^2 $ with fixed $t>8$. 
Let 
\[
\kappa =\|\phi\|_{L^\infty(\{\frac{1}{2}t\leq |x|\leq \frac{3}{2}t\})}.
\]
By using the weighted energy estimate of  Proposition \ref{prop:phi:L2:1} and the first order energy bound of Proposition \ref{prop:PD:H2} together with the above pointwise decay estimate for $\phi$ when $|r-t|\geq \frac{t}{2}$, we can show that
\begin{align*}
\|\widetilde{\phi}\|_{L^2(\R^2)}&  \les t^{-1}\|\phi \|_{L^2(\R^2)}  \|\phi \|_{L^\infty(\R^2)}\les  
(1+t)^{\frac{1-p}{4}}(\kappa +\sqrt{\ln(1+t)} (1+t)^{\frac{1-p}{8}}),\\
 \|\nabla\widetilde{\phi}\|_{L^2(\R^2)} &\leq 2\| |\bar{D}\phi ||\phi |\|_{L^2(\R^2)} \les \kappa +\sqrt{\ln(1+t)} (1+t)^{\frac{1-p}{8}}, \\ 
\|\nabb\widetilde{\phi}\|_{L^2(\R^2)} & \leq 2 \| |\phi| |D_{r^{-1}\Omega}\phi |\|_{L^2(\R^2)} \les
(1+t)^{\frac{1-p}{4}} (\kappa +\sqrt{\ln(1+t)} (1+t)^{\frac{1-p}{8}}),\\
 \|\pa_j\pa_k \tilde{\phi}\|_{L^2(\R^2)} & \les  t(\| \bar{D} \phi  \|_{L^4(\R^2)}^2+\||\phi| |\bar{D}\bar{D}\phi|\|_{L^2(\R^2)})\\
 &\les t(\| \bar{D} \phi  \|_{L^2(\R^2)}  \|\bar{D} \bar{D} \phi  \|_{L^2(\R^2)}  +\||\phi| |\bar{D}\bar{D}\phi|\|_{L^2(\R^2)})\\
 &\les   (1+t)^6(1+\kappa +\sqrt{\ln(1+t)} (1+t)^{\frac{1-p}{8}} )  
\end{align*}
Then  the logarithmic Sobolev embedding of Lemma \ref{lem:log:Sobolev} implies that 
\begin{align*}
 \kappa^4=\|\tilde{\phi} \|_{L^{\infty}(\{\frac{1}{2}\leq |x|\leq \frac{3}{2}\} )}^2
& \les \|\widetilde{\phi}\|_{H^1(\R^2)}\|(\widetilde{\phi},\nabb \widetilde{\phi})\|_{L^2(\R^2)}\left(1+\ln\frac{\|\widetilde{\phi}\|_{H^{2}(\R^2)}} {\|(\widetilde{\phi},\nabb \widetilde{\phi})\|_{L^2(\R^2)}}\right) \\
&\les (\kappa +\sqrt{\ln(1+t)} (1+t)^{\frac{1-p}{8}})^2 (1+t)^{\frac{1-p}{4}} (1+\ln(1+t)+\ln(1+\kappa )),
\end{align*}
from which we can demonstrate that 
\[
\kappa \les \sqrt{\ln(1+t)} (1+t)^{\frac{1-p}{8}}. 
\]
We thus finished the proof for the  Proposition.
\end{proof}
 \begin{remark}
Away from the light cone the solution decays faster
\begin{align*}
|\phi|\leq C 
(1+t)^{-\frac{p-1}{4}+\epsilon}, \quad |t-r|\geq \frac{t}{2}
\end{align*}
for any positive constant $\epsilon>0$ and some constant $C_{\epsilon}$ also depending on $\epsilon$. To see this, it suffices to repeat the above procedure based on the decay estimate of Proposition \ref{prop:ptdecay:allp}.  
\end{remark}
 
\section{Improved pointwise decay estimates for larger $p$}
In the previous section, we have obtained rough pointwise decay estimates for solution to the Chern-Simons-Higgs equation \eqref{eq:CSH:p:2d} for sufficiently localized initial data for all $p>1$. This section is devoted to improving the decay rate for larger power of $p>4$. In view of Proposition \ref{prop:td:2dCSH}, the potential energy decay rate  for the case when $p>5$ is the same with that for $p=5$. It suffices to restrict our attention to the range $4<p\leq 5$.  The idea in \cite{yang:NLW:2D} for the flat case with vanishing connection field $A$ relied on the fundamental solution to the linear wave equation and the nonlinearity can be controlled in terms of the potential energy decay. For the geometric nonlinear equation studied here, there are extra complicated nonlinear terms when applying the  Poisson paramatrix as we have already seen for the growth of higher order energy.

 Alternatively, we make use of the conformal invariance structure of the system \eqref{eq:CSH:p:2d} to obtain sharp time decay estimate for the solution. Inspired by the work on the long time dynamics for the Maxwell-Klein-Gordon equation in \cite{YangYu:MKG:smooth}, one can first study the solution outside a forward light cone $\{t\leq |x|+R\}$. Since the initial data are sufficiently localized, the initial data on $\{t=0, |x|\geq R\}$ can be sufficiently small by taking $R$ large enough. In particular, the problem in this region is reduced to small data global solution for semilinear wave equation with null condition in space dimension two which have been well studied, see for example \cite{Alinhac01:null:2D:2}, \cite{Alinhac01:null:2D:1}. For simplicity, we only study the solution in the interior region inside the forward light cone $\{t+R\geq |x|\}$ and without loss of generality under the assumption that the initial data are compactly supported in $\{|x|\leq 1\}$.

  In the following, we will also allow the implicit constant in $A\les B$ rely on the initial conformal energy $ \mathcal{E}_{0, 2}$, the first order initial energy $\mathcal{E}_{1, 0}$ for the Cauchy problem to the original system \eqref{eq:CSH:p:2d} and a small positive constant $0<\ep<100^{-1}(p-4)$ which may be different from place to place.

\subsection{Conformal transformation}
In view of finite speed of propagation, the solution is support in $\{t+2\geq |x|\}$. 
Define the hyperboloid
\begin{equation*}
\mathbb{H}:=\left\{(t, x)|(t^*)^2-|x|^2=  t^*\right\}, \quad t^*=t+2
\end{equation*}
and the map
\begin{align*}
\mathbf{\Phi}:(t, x)\longmapsto \left(-\frac{t^*}{(t^*)^2-|x|^2}+1,\quad \frac{x}{(t^*)^2-|x|^2}\right)
\end{align*}
from Minkowski space $\R^{1+2}$  to itself. We are in particular interested in the region inclosed by the hyperboloid $\mathbb{H}$ 
\begin{align*}
\mathbf{D}:=\left\{(t, x)|(t^*)^2-|x|^2\geq   t^*\right\}.
\end{align*}
The image of $\mathbf{D}$ under the above map $\mathbf{\Phi}$ is a backward light cone  
\begin{align*}
\mathbf{\Phi}(\mathbf{D})=\left\{(\tilde{t}, \tilde{x})|\quad \tilde{t}+|\tilde{x}|<1,\quad \tilde{t}\geq 0\right\}.
\end{align*}
For solution $(\phi, A)$ to the Chern-Simons-Higgs equation \eqref{eq:CSH:p:2d}, we want to find an equivalent field equation on $(\mathbf{\Phi}(\mathbf{D}), \tilde{g}=d(\mathbf{\Phi}^{-1})^* m)$. 
Let 
\begin{equation*}
\Lambda(t, x)=(t^*)^2-|x|^2, \quad \tilde{\phi}(\tilde{t}, \tilde{x})=\Lambda^{\frac{1}{2}}\phi(\mathbf{\Phi}^{-1}(\tilde{t}, \tilde{x})),\quad \tilde{A}(\tilde{t}, \tilde{x})=A(\mathbf{\Phi}^{-1}(\tilde{t}, \tilde{x})).
\end{equation*}
Here we emphasize that as a 1-form $A$ is invariant under the change of coordinates.  
 We need to find the equations for $\tilde{\phi}$, $\tilde{A}$ on $\mathbf{\Phi}(\mathbf{D})$. In $(t, x)$ coordinates, note that 
\[
\tilde{g}=\Lambda^{-2}m.
\]
Hence the Christoffel symbols for the metric $\tilde{g}$ under the coordinates system $(t, x)$ are
\begin{align*}
  \tilde{\Gamma}_{\mu\nu}^l 
  &=-\Lambda^{-1}(\delta_\mu^{\ l}\pa_\nu \Lambda+\delta_\nu^{\ l}\pa_\mu \Lambda-m_{\mu\nu}\pa^l \Lambda).
\end{align*}
For the covariant derivative, still under the coordinates $(t, x)$ on $\mathbf{\Phi}(\mathbf{D})$, we compute that 
\begin{align*}
\tilde{D}_{\mu}\tilde{\phi}=\pa_{\mu}(\Lambda^{\frac{1}{2}}\phi)+i A_{\mu} \Lambda^{\frac{1}{2}}\phi= \Lambda^{\frac{1}{2}} D_{\mu}\phi+ \pa_\mu \Lambda^{\frac{1}{2}} \phi.
\end{align*} 
The covariant wave operator verifies  
\begin{align*}
  \Box_{\tilde{A}}\tilde{\phi} & =  \Lambda^2 m^{\mu\nu} \tilde{D}_\mu \tilde{D}_{\nu}\tilde{\phi} \\
&=\Lambda^2 m^{\mu\nu}  \Lambda^{\frac{1}{2}} D_\mu (\Lambda^{-\frac{1}{2}}\tilde{D}_{\mu}\tilde{\phi} )+\Lambda^2 m^{\mu\nu} \pa_\mu \Lambda^{\frac{1}{2}}  \Lambda^{-\frac{1}{2}}\tilde{D}_{\mu}\tilde{\phi}-\Lambda^2 m^{\mu\nu}\tilde{\Gamma}_{\mu\nu}^l \tilde{D}_{l}\tilde{\phi} \\
&=\Lambda^2 m^{\mu\nu}  \Lambda^{\frac{1}{2}} D_\nu ( D_{\mu}\phi+ \Lambda^{-\frac{1}{2}}\pa_\mu \Lambda^{\frac{1}{2}} \phi)-\frac{1}{2}\Lambda  \pa^\mu \Lambda ( \Lambda^{\frac{1}{2}} D_{\mu}\phi+ \pa_\mu \Lambda^{\frac{1}{2}} \phi) \\
  &= \Lambda^{\frac{5}{2}}\Box_A\phi+(\Lambda^2\pa^\mu \Lambda^{\frac{1}{2}}-\frac{1}{2}\Lambda  \pa^\mu \Lambda \Lambda^{\frac{1}{2}} ) D_\mu\phi +(\Lambda^{\frac{5}{2}}\pa^\mu (\Lambda^{-\frac{1}{2}}\pa_\mu \Lambda^{\frac{1}{2}} ) - \frac{1}{2}\Lambda  \pa^\mu \Lambda  \pa_\mu \Lambda^{\frac{1}{2}})\phi \\ 
  &= \Lambda^{\frac{5}{2}}\Box_A\phi=\Lambda^{\frac{5}{2}} |\phi|^{p-1}\phi =\Lambda^{\frac{5-p}{2}}|\tilde{\phi}|^{p-1}\tilde{\phi }.
\end{align*}
Here lowering or raising the indices is with respect to the flat Minkowski metric $m_{\mu\nu}$.
Next we want to find the equation for $\tilde{F}=d\tilde{A}$. Since as a $1$-form $\tilde{A}=A$, in particular we have  
\begin{align*}
\tilde{F}_{\mu\nu}d\tilde{x}^{\mu}\wedge d\tilde{x}^{\nu}=F_{\mu\nu }dx^\mu \wedge dx^\nu.
\end{align*}
As a $1$-form, we can show that  
\begin{align*}
 \tilde{J}_\mu d\tilde{x}^\mu=\Im(\tilde{\phi}\cdot \overline{\tilde{D}_{\mu}\tilde{\phi}})d\tilde{x}^{\mu}=\Lambda \Im (\phi\cdot \overline{D_{\mu}\phi})dx^\mu=\Lambda J_\mu dx^\mu.
\end{align*}
On $\mathbf{D}$, the Maxwell field $F$ verifies the equation $F_{\mu\nu}= \epsilon_{\mu\nu\ga} J^{\gamma}$ with $\epsilon_{\mu\nu\ga}$ the totally skew symmetric tensor such that $\epsilon_{012}=1$.  
For fixed $\mu \neq \nu$ and under coordinates $(t, x)$, we then can show that 
\begin{align*}
\tilde{F}_{\mu\nu} \epsilon_{\mu\nu\ga }d\tilde{x}^\mu \wedge d\tilde{x}^\nu \wedge d\tilde{x}^{\ga} &=\tilde{F}_{\mu\nu} d\tilde{t}\wedge d\tilde{x}^1\wedge d\tilde{x}^2=\tilde{F}_{\mu\nu}\Lambda^{-3}dt\wedge dx^1\wedge dx^2\\
&=F_{\alpha \beta} dx^\alpha \wedge dx^\beta \wedge d\tilde{x}^\gamma \epsilon_{\mu\nu\ga }\\
&=\epsilon_{\alpha \beta l} J^l dx^\alpha \wedge dx^\beta \wedge dx^l \frac{\pa \tilde{x}^\gamma}{\pa x^l}\epsilon_{\mu\nu\ga }\\
& = m^{ll} \Lambda^{-1} \tilde{J}_{k}\frac{\pa \tilde{x}^k}{\pa x^l}  \frac{\pa \tilde{x}^\gamma}{\pa x^l}\epsilon_{\mu\nu\ga } dt \wedge dx^1 \wedge dx^2,
\end{align*}
from which we conclude that 
\begin{align*}
 \tilde{F}_{\mu\nu} 
 =\epsilon_{\mu\nu\ga }  m^{ll} \Lambda^{2} \tilde{J}_{k}\frac{\pa \tilde{x}^k}{\pa x^l}  \frac{\pa \tilde{x}^\gamma}{\pa x^l}= \epsilon_{\mu\nu\ga } \tilde{J}_{k} \tilde{g}(d\tilde{x}^k, d\tilde{x}^\ga)  =\epsilon_{\mu\nu\ga}\tilde{J}^{\gamma}.
\end{align*}
These computations imply that the Chern-Simons equation \eqref{eq:CSH:p:2d} for $(\phi, A)$ on $\mathbf{D}\subset\R^{1+2}$ is  
equivalent to 
\begin{equation}
\label{eq:CS:cf}
 \Box_{\tilde{A}}\tilde{\phi} =\Lambda^{\frac{5-p}{2}} |\tilde\phi|^{p-1}\tilde{\phi },\quad \tilde{F}_{\mu\nu}=\epsilon_{\mu\nu\ga} J[\tilde{\phi}]^{\ga}.
 \end{equation} 
on $\mathbf{\Phi}(\mathbf{D})$ under the coordinates $(\tilde{t}, \tilde{x})$ with 
\[
\Lambda=(t^*)^2-|x|^2=(\tilde{t}^2-|\tilde{x}|^2)^{-1}. 
\]
Moreover under the coordinates $(\tilde{t}, \tilde{x})$ on $\mathbf{\Phi}(\mathbf{D})$, note that 
\begin{align*}
2+t+r=(1-\tilde{t}-\tilde{r})^{-1},\quad 2+t-r=(1-\tilde{t}+\tilde{r})^{-1},\quad dtdx=\Lambda^{3}d\tilde{t}d\tilde{x}. 
\end{align*}
Thus in terms of $\tilde{\phi}$ on $\mathbf{\Phi}(\mathbf{D})$, we have the bound 
\begin{equation}
\label{eq:bd4:po}
\iint_{\tilde{t}+|\tilde{x}|\leq 1} (1-\tilde{t}-|\tilde{x}|)^{-\gamma} \Lambda^{\frac{5-p}{2}}|\tilde{\phi}|^{p+1}d\tilde{t}d\tilde{x}  
= \iint_{\mathbf{D}}   (2+t+r)^{\gamma }  |\phi|^{p+1}dtdx  \les 1
\end{equation}
 for all $\gamma <\frac{p-3}{2}$ in view of the potential energy decay estimate of Proposition \ref{prop:td:2dCSH}. 
With this spacetime bound, we establish a type of energy estimate for solution to \eqref{eq:CS:cf}. 
We start with the energy identity
\begin{align*}
\tilde{\pa}^\mu ( T[\tilde{\phi}]_{\mu\nu}  X^\nu )
&=T[\tilde{\phi}]^{\mu\nu} \pi_{\mu\nu}^X-\frac{1}{p+1}X(\Lambda^{\frac{5-p}{2}})|\tilde{\phi}|^{p+1}
\end{align*}
for solution $(\tilde{A},\tilde{\phi})$ to \eqref{eq:CS:cf} with 
\[
T[\tilde{\phi}]_{\mu\nu}=\l  \tilde{D}_\mu \tilde{\phi}, \tilde{D}_\nu \tilde{\phi}\r  -\frac{1}{2}\tilde{m}_{\mu \nu} (\l \tilde{D}^{\ga}\tilde{\phi}, \tilde{D}_\ga\tilde{\phi} \r+\frac{2}{p+1}\Lambda^{\frac{5-p}{2}}|\tilde{\phi }|^{p+1})
\]
and $\tilde{m}$ the Minkowski metric on $\R^{1+2}$. For $\gamma < \frac{p-3}{2}\leq 1$, let 
\[
X=(1-\tilde{t}-\tilde{r})^{1-\gamma}\pa_{\tilde{t}}=(1-\tilde{t}-\tilde{r})^{1-\gamma }\tilde{\pa}_{0}.
\]
We compute that 
\begin{align*}
&T[\tilde{\phi}]_{0\nu}  X^\nu=T[\tilde{\phi}]_{00} (1-\tilde{t}-\tilde{r})^{1-\gamma}=\frac{1}{2}(1-\tilde{t}-\tilde{r})^{1-\gamma} (|\tilde{D}\tilde{\phi}|^2+\frac{2\Lambda^{\frac{5-p}{2}}}{p+1}|\tilde{\phi }|^{p+1}),\\
&T[\tilde{\phi}]^{\mu\nu} \pi_{\mu\nu}^X=(1-\gamma)(1-\tilde{t}-\tilde{r})^{-\gamma} (T[\tilde{\phi}]_{00}-T[\tilde{\phi}]_{0\tilde{r}})\geq 0, \\ 
& X(\Lambda^{\frac{5-p}{2}}) = (p-5)(1-\tilde{t}-\tilde{r})^{-\gamma }\Lambda^{\frac{5-p}{2}} (1-\tilde{t})(1-\tilde{t}+\tilde{r})^{-1}.
\end{align*}
Applying Stokes formula to the region $\{(s, \tilde{x})|s+|\tilde{x}|\leq 1, 0\leq s\leq \tilde{t}\}$  leads to the energy estimate 
\begin{equation}
\label{eq:WE:2dcf}
\begin{split}
&\int_{|\tilde{x}|\leq 1-\tilde{t}} (1-\tilde{t}-\tilde{r})^{1-\gamma} (|\tilde{D}\tilde{\phi}|^2+\frac{2}{p+1}\Lambda^{\frac{5-p}{2}}|\tilde{\phi}|^{p+1})  d\tilde{x} \\
 &\leq \int_{\tilde{t}=0, |\tilde{x}|\leq 1 } (1 -\tilde{r})^{1-\gamma} (|\tilde{D}\tilde{\phi}|^2+\frac{2}{p+1}\Lambda^{\frac{5-p}{2}}|\tilde{\phi}|^{p+1} ) d\tilde{x}\\ 
&\quad +2|p-5|\int_{0}^{\tilde{t}}\int_{|\tilde{x}|\leq 1-s}  (1-\tilde{t}-\tilde{r})^{-\gamma}\Lambda^{\frac{5-p}{2}}   |\tilde{\phi}|^{p+1}d\tilde{x}ds\\
&\les 1
\end{split}
\end{equation}
in view of the bound \eqref{eq:bd4:po}. Here the first term on the right hand side is the weighted initial energy on $\{(0, \tilde{x})||\tilde{x}|\leq 1\}$.

Finally the rough pointwise decay estimate of the main theorem \ref{thm:main:pd:gen} indicates that 
\begin{equation}
\label{eq:pt:phi:cf:2d}
\begin{split}
|\tilde{\phi}(\tilde{t}, \tilde{x})|&=|\Lambda^{\frac{1}{2}}\phi(t, x)|\les (2+t+r)^{-\frac{p-1 }{8}} (2+t+r)^{\frac{1}{2}}(2+t-r)^{\frac{1}{2}}\sqrt{\ln(2+t+r)} \\ 
&\les (1-\tilde{t}-\tilde{r})^{\frac{p-5-\ep_1}{8} }(1-\tilde{t})^{-\frac{1}{2}}
\end{split}
\end{equation}
for all $\ep_1>0$. 

 \subsection{Singular Chern-Simons-Higgs equation on compact region}
 The previous discussion indicates that the long time dynamic behaviors for solution to the Chern-Simons-Higgs equation \eqref{eq:CSH:p:2d} is then reduced to study the singular equation \eqref{eq:CS:cf} on the compact region $\mathbf{\Phi}(\mathbf{D})$. To make use of the geometric paramatrix, instead of investigating the system \eqref{eq:CS:cf} in space dimension two, we lift it to space dimension three. 

 Let $B$ be the ball with radius $1$ in $\mathbb{R}^3$, that is, 
\[
B=\{|x|=|(x_1, x_2, x_3)|\leq 1\}.
\]
 The maximal Cauchy development is the backward cone  $\mathcal{J}^+(B)$
\begin{align*}
\mathcal{J}^+(B)=\{t+|x|\leq 1\}.
\end{align*}
Let $r=|x|$ and define the null coordinates 
\[
u=\frac{t-r}{2},\quad v=\frac{t+r}{2},\quad u_*=1-t+r,\quad v_*=1-t-r.
\]
At any fixed point, we can choose a null frame $\{L=\pa_t+\pa_r, \Lb=\pa_t-\pa_r, e_1, e_2\}$ such that 
\begin{equation*}
 \begin{split}
&\nabla_{e_j} L=r^{-1}e_j, \quad \nabla_{e_j}\Lb=-r^{-1}e_j, \quad \nabla_{e_1} e_2=\nabla_{e_2}e_1=0, \quad \nabla_{e_j}e_j=-r^{-1}\pa_r.
 \end{split}
\end{equation*}
Here $\nabla$ is the covariant derivative in $\R^3$. 

\def\J{\mathcal{J}^{-}}
\def\N{\mathcal{N}^{-}}
For any point $q=(t_0, x_0)\in \mathcal{J}^{+}(B)$, let $\N(q)$ be the past null cone with vertex $q$, that is, 
\begin{align*}
   \mathcal{N}^{-}(q):=\big\{(t, x)\big| t_0-t=|x-x_0|,\ t\geq 0\big\}
 \end{align*}
 and $\mathcal{J}^{-}(q)$ to be the past of the point $q$, i.e., the region bounded by $\mathcal{N}^{-}(q)$ and the initial hypersurface $B$. For $r>0$ and $q=(t_0, x_0)\in\mathbb{R}^{1+3}$, let  $B_q(r)$ and $S_q(r)$ be the  spatial ball and sphere at time $t_0$ with radius $r$ centered at the point $q$, that is,
\begin{align*}
  B_q( r)=\big\{(t,x)\big| t=t_0, |x-x_0|\leq r\big\},\quad S_q(r)=\big\{(t,x)\big| t=t_0, |x-x_0|= r\big\}.
\end{align*}
Unless it is specified, we will always  fix a point $q=(t_0, x_0)\in \mathcal{J}^{+}(B)$ during the argument and we will also use the translated coordinate system $(\widetilde{t}, \widetilde{x})$ so that it is  centered at the point $q$. More specifically,
\[
\widetilde{t}=t-t_0,\quad \widetilde{x}=x-x_0,\quad \widetilde{r}=|\widetilde{x}|,\quad \widetilde{\om}=\frac{\widetilde{x}}{|\widetilde{x}|},\quad \widetilde{u}=\f12 (\widetilde{t}-\widetilde{r}),\quad \widetilde{v}=\f12(\widetilde{t}+\widetilde{r}).
\]
In particular, we have the associated null frame $\{\widetilde{L}, \widetilde{\Lb}, \widetilde{e}_1, \widetilde{e}_2\}$.

Now we lift the equation \eqref{eq:CS:cf} on $\R^{1+2}$ to higher dimensional space $\R^{1+3}$ by simply extending the $1$-form $\tilde{A}$ and scalar field $\tilde{\phi}$ to $(A, \phi)$ defined on  $\R^{1+3}$ such that 
\[
A(t, x)=\tilde{A}(t, x_1, x_2),\quad A_3=0,\quad \phi(t, x)=\tilde{\phi}(t, x_1, x_2).
\] 
Here we emphasize that this $(A, \phi)$ is different from that as solution to the original system \eqref{eq:CSH:p:2d}. We abuse the notation for avoiding too many notations. 
Then the covariant derivative $D_\mu$ can also be extended to $\R^{1+3}$ associated to the connection field $A$. The commutator gives the Maxwell field $F$
\[
F=dA,\quad i F_{\mu\nu}=[D_\mu, D_\nu],\quad \mu, \nu=0, 1, 2, 3. 
\]
We still use $m=diag\{-1, 1, 1, 1\}$ to denote the Minkowski metric on $\R^{1+3}$. 
Since $A_3=0$ and $A$ is independent of the variable $x_3$, it is obvious that 
\[
F_{3\mu}=0,\quad \mu=0, 1, 2, 3.  
\]
Therefore equation \eqref{eq:CS:cf} is equivalent to 
\begin{equation}
\label{eq:CSH:3d}
\begin{cases}
 \Box_{A}\phi =\Lambda^{\frac{5-p}{2}} |\phi|^{p-1} \phi, \quad \Lambda=((1-t)^2-x_1^2- x_2^2 )^{-1} ,\\ 
 F_{3\ga}=0, \quad J[\phi]_3=0,\quad \ga=0, 1, 2, \\ 
 F_{\mu\nu}=\epsilon_{\mu\nu\ga} J[\phi]^{\ga},\quad J[\phi]_{\ga}=\Im(\phi\cdot\overline{D_{\ga}\phi}),\quad  \mu,\nu=0, 1, 2. 
 \end{cases}
 \end{equation} 
 Here $\epsilon_{\mu\nu\ga}$ is the skew symmetric tensor such that $\epsilon_{012}=1$. By our assumption that the initial data for the original system \eqref{eq:CSH:p:2d} in $\R^{1+2}$ are compactly supported, the initial data for \eqref{eq:CSH:3d} are also compactly supported in variable $x_1$ and $x_2$ and is independent of the third variable $x_3$. 
Moreover the solution $\tilde{\phi}$ in the previous section verifies the a priori bounds \eqref{eq:bd4:po}, \eqref{eq:WE:2dcf},\eqref{eq:pt:phi:cf:2d} in space dimension two.
As solution in space dimension three here, these estimates imply the weighted potential energy bound
\begin{equation} 
\label{eq:bd4:po:3}
\begin{split}
  &\iint_{\mathcal{J}^+(B)} (1-t)^{-\frac{1}{2}}(1-t-r_1)^{-\gamma-\frac{1}{2}}\Lambda^{\frac{5-p}{2}} |\phi|^{p+1}dtdx_1dx_2 dx_3 \\ 
&=\int_{t+r_1\leq 1} \Lambda^{\frac{5-p}{2}} (1-t)^{-\frac{1}{2}}(1-t-r_1)^{-\gamma-\frac{1}{2}} |\tilde{\phi}|^{p+1} dt dx_1 dx_2 \int_{|x_3|^2\leq (1-t)^2-r_1^2}   dx_3 \\ 
& \leq 2\sqrt{2}  \int_{t+r_1\leq 1} (1-t-r_1)^{-\gamma}\Lambda^{\frac{5-p}{2}} |\tilde\phi|^{p+1} dt dx_1 dx_2 \\ 
&\les 1
\end{split}
\end{equation}
and the energy decay estimate 
\begin{equation}
\label{eq:WEE:3d}
\begin{split}
&\int_{|x|\leq 1- t} (1-t-r_1)^{\frac{1}{2}-\gamma}(|D \phi |^2 +\frac{2}{p+1}\Lambda^{\frac{5-p}{2}}|\phi|^{p+1}) dx_1 dx_2 dx_3 \\ 
&=
\int_{|(x_1, x_2)|\leq 1- t} (1-t+r_1)^{\frac{1}{2}}(1-t-r_1)^{1-\gamma }|D\tilde{\phi}|^2 dx_1 dx_2\les \sqrt{1-t}
\end{split}
\end{equation}
as well as the pointwise bound 
 \begin{equation}
\label{eq:pt:phi:cf:3d}
\begin{split}
|\phi( t, x_1, x_2, x_3)|=|\tilde{\phi}(t, x_1, x_2)| \les (1- t- r_1)^{\frac{p-5-\ep_1}{8}}(1- t)^{-\frac{1}{2}}.
\end{split}
\end{equation}
Here $r_1=\sqrt{x_1^2+x_2^2}$ and note that $\Lambda$ and $\phi$ is independent of the variable $x_3$.

Our key estimate is a uniform weighted energy flux bound through the backward light cone $\mathcal{N}^{-}(q)$. 
\begin{Prop}
\label{prop:EF:cone:gamma}
Let $\gamma\in [0, \frac{1}{2})$. For any point $q=(t_0, x_0)\in \mathcal{J}^{+}(B)$,  
we have the   uniform energy flux bound
\begin{align*}
  \int_{\mathcal{N}^{-}(q)} (1-t)^{-\gamma}\Big( |\widetilde{\D} \phi|^2  + |D_{\widetilde{\Lb}} \phi|^2 + \Lambda^{\frac{5-p}{2}}    |\phi |^{p+1}\Big)  \widetilde{r}^2d\widetilde{u}d\widetilde{\om}   \les 1 
\end{align*} 
and the local decay of the energy flux 
\begin{align*}
  \int_{ \mathcal{N}^{-}(q) \cap\{  t_0-\tilde{r}\leq t\leq t_0\} }  \Big( |\widetilde{\D} \phi|^2  + |D_{\widetilde{\Lb}} \phi|^2 + \Lambda^{\frac{5-p}{2}}    |\phi |^{p+1} \Big)  s^2ds d\widetilde{\om}   \les \sqrt{\tilde{r}} ,\quad 0\leq \tilde{r}\leq t_0.
\end{align*} 
Here recall that the null frame $\{\tilde{L}, \tilde{e_1}, \tilde{e}_2\}$ is with respect to the coordinate  $(\widetilde{t}, \widetilde{x})$ centered at the point $q=(t_0, x_0)$.
\end{Prop}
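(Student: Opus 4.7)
The plan is to apply the vector field method to the system \eqref{eq:CSH:3d} on $\Omega = \mathcal{J}^-(q)\cap\{t\geq 0\}$, exploiting two structural features: the Chern-Simons relation $F_{\mu\nu}=\epsilon_{\mu\nu\gamma}J^\gamma$ together with the antisymmetry of $\epsilon$ ensures $X^\nu F_{\gamma\nu}J^\gamma[\phi]=0$ for any $X$ proportional to $\partial_t$, so the nonlinear flux from the connection vanishes identically; and the lifted field $\phi$ is independent of $x_3$, so that Fubini in $x_3$ over a 3-ball converts the pointwise blowup of the 2D energy density near $r_1 = 1$ into an effective $\sqrt{1-r_1}$ weight, precisely the power controlled by \eqref{eq:WE:2dcf} when $p > 4$.

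For the uniform weighted flux bound I would take the multiplier $X=(1-t)^{-\gamma}\partial_t$ and work from the divergence identity
\[
\partial^\mu(T[\phi]_{\mu\nu}X^\nu)=T^{\mu\nu}\pi^X_{\mu\nu}-\frac{1}{p+1}X(\Lambda^{(5-p)/2})|\phi|^{p+1},
\]
with $T[\phi]$ the symmetric energy-momentum tensor including the potential $V=\frac{2}{p+1}\Lambda^{(5-p)/2}|\phi|^{p+1}$. A direct computation gives $T^{\mu\nu}\pi^X_{\mu\nu}=-\gamma(1-t)^{-\gamma-1}T_{00}\leq 0$, and from $\partial_t\Lambda^{(5-p)/2}=(5-p)(1-t)\Lambda^{(7-p)/2}\geq 0$ for $p\leq 5$ the potential bulk is also nonpositive. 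Both bulks having the favorable sign, Stokes' theorem combined with the null-frame decomposition $T(X,\widetilde{\Lb})=\frac{(1-t)^{-\gamma}}{2}\bigl(|\widetilde{\D}\phi|^2+|D_{\widetilde{\Lb}}\phi|^2+V\bigr)$ on the past cone yields the weighted flux bounded above by the unweighted initial energy $\int_{t=0,\,|x-x_0|\leq t_0}T_{00}(0,x)\,dx$.

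The hard part is bounding this initial integral uniformly in $q$, since the 2D energy density is genuinely singular as $r_1\to 1$. Taking $x_{03}=0$ by $x_3$-translation invariance of the system, Fubini in $x_3$ gives
\[
\int_{t=0,\,|x-x_0|\leq t_0}T_{00}(0,x)\,dx=\int_D T_{00}^{(2D)}(0,x_1,x_2)\cdot 2\sqrt{t_0^2-\rho^2}\,dx_1 dx_2,
\]
with $D$ the horizontal 2-disc of radius $t_0$ around $(x_{01},x_{02})$ and $\rho=\sqrt{(x_1-x_{01})^2+(x_2-x_{02})^2}$. The geometric constraint $|x_0|+t_0\leq 1$ combined with $\rho\geq r_1-|x_0|$ produces the key pointwise estimate $\sqrt{t_0^2-\rho^2}\leq\sqrt{2(1-r_1)}$, so the integral is dominated by $\int(1-r_1)^{1/2}T_{00}^{(2D)}(0,\cdot)\,dx_1 dx_2\lesssim 1$ via \eqref{eq:WE:2dcf} with $\gamma=1/2$, which is admissible since $p>4$ forces $(p-3)/2>1/2$.

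For the local decay estimate the same identity is applied with the unweighted multiplier $X=\partial_t$ on the sub-region $\Omega'=\mathcal{J}^-(q)\cap\{t_0-\widetilde{r}\leq t\leq t_0\}$, whose spacelike floor is the 3-ball $\{|x-x_0|\leq\widetilde{r}\}$ at time $t_0-\widetilde{r}$. Repeating the Fubini-in-$x_3$ argument with the refined estimate $\sqrt{\widetilde{r}^2-\rho'^2}\leq\sqrt{2\widetilde{r}\bigl(1-(t_0-\widetilde{r})-r_1\bigr)}$ extracts the desired $\sqrt{\widetilde{r}}$ prefactor and reduces the floor integral to $\int\bigl(1-(t_0-\widetilde{r})-r_1\bigr)^{1/2}T_{00}^{(2D)}(t_0-\widetilde{r},\cdot)\,dx_1 dx_2\lesssim 1$ via \eqref{eq:WE:2dcf} applied at time $t_0-\widetilde{r}$. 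The main bookkeeping subtlety throughout will be the sign convention on the past null cone in Stokes' theorem, which I expect to handle as in the standard Minkowski computation for past cones of future vertices.
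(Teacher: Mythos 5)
Your setup is the same as the paper's: the multiplier $X=(1-t)^{-\gamma}\pa_t$, the divergence identity with the vanishing Chern--Simons flux, and the Fubini-in-$x_3$ device converting the $x_3$-thickness $\sqrt{t_0^2-\rho^2}\les\sqrt{1-r_1}$ of the slab into the weight controlled by \eqref{eq:WE:2dcf}. The identity, the computation $T^{\mu\nu}\pi^X_{\mu\nu}=-\gamma(1-t)^{-1-\gamma}T_{00}$, and the sign of $\pa_t\Lambda^{\frac{5-p}{2}}$ are all correct.

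The gap is the sign you assign to the bulk. With the convention $\pa^\mu(T_{\mu\nu}X^\nu)=T^{\mu\nu}\pi^X_{\mu\nu}-\frac{1}{p+1}X(\Lambda^{\frac{5-p}{2}})|\phi|^{p+1}$, Stokes' theorem on $\mathcal{J}^{-}(q)$ reads
\begin{equation*}
\text{(flux through }\mathcal{N}^{-}(q))=\text{(initial energy on the base)}-\iint_{\mathcal{J}^{-}(q)}\pa^\mu(T_{\mu\nu}X^\nu)\,d\vol .
\end{equation*}
Both of your bulk contributions are $\leq 0$, so the spacetime integral enters the right-hand side with a \emph{plus} sign: it is the \emph{unfavorable} sign, and you cannot discard it. (A quick consistency check: for a free wave the identity gives $\int_{t=T}(1-T)^{-\gamma}T_{00}=\int_{t=0}T_{00}+\gamma\iint(1-t)^{-1-\gamma}T_{00}$, and the left side genuinely blows up as $T\to1$ unless the unweighted energy decays; also, were your argument valid it would prove the proposition for every $\gamma\geq0$, whereas the restriction $\gamma<\frac12$ must enter somewhere.) The paper's proof instead bounds $\iint|\pa^\mu(T_{\mu\nu}X^\nu)|$: the deformation term is controlled by $\int_0^1(1-t)^{-1-\gamma}\big(\int_{|x|\leq1-t}T_{00}\,dx\big)dt\les\int_0^1(1-t)^{-\frac12-\gamma}dt$, using the quantitative decay $\int_{|x|\leq1-t}T_{00}\,dx\les\sqrt{1-t}$ from \eqref{eq:WEE:3d} (itself obtained by your Fubini-in-$x_3$ observation), and the $X(\Lambda^{\frac{5-p}{2}})$ term is controlled by the weighted potential energy bound \eqref{eq:bd4:po:3}. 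This is where $\gamma<\frac12$ is used and is the real content of the first estimate. The same omission affects your local decay estimate: even with $X=\pa_t$ the $\Lambda$-bulk survives over $\mathcal{J}^{-}(q)\cap\{t_0-\tilde r\leq t\leq t_0\}$ and must itself be shown to be $\les\sqrt{\tilde r}$ (again by Fubini in $x_3$ together with \eqref{eq:bd4:po}); your proposal only treats the spacelike floor. The floor and initial-data estimates you give are correct and match the paper.
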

\begin{proof}
The energy momentum tensor for the scalar field $\phi$ defined on $\R^{1+3}$ as solution to \eqref{eq:CSH:3d} is of the same form
\begin{align*}
T[\phi]_{\mu\nu}=\l  D_\mu \phi, D_\nu\phi\r  -\frac{1}{2}m_{\mu\nu} (\l D^{\ga}\phi, D_\ga\phi \r+\frac{2}{p+1}\Lambda^{\frac{5-p}{2}}|\phi|^{p+1}),
\end{align*}
in which $m_{\mu\nu}$ is the flat metric on $\R^{1+3}$. We also have the energy identity for solution $(A, \phi)$ to \eqref{eq:CSH:3d}
\begin{align*}
\pa^\mu ( T[ \phi]_{\mu\nu}  X^\nu )
&=T[\phi]^{\mu\nu} \pi_{\mu\nu}^X-\frac{1}{p+1}X(\Lambda^{\frac{5-p}{2}})| \phi|^{p+1}
\end{align*}
for any vector field $X$ defined on $\mathcal{J}^{+}(B)$. 
Now take the vector field 
\[
X=(1-t)^{-\gamma}\pa_t. 
\]
We compute that 
\begin{align*}
&T[\phi]^{\mu\nu} \pi_{\mu\nu}^X=-\frac{1}{2}\gamma (1- t)^{-1-\gamma} (|D \phi|^2+\frac{2\Lambda^{\frac{5-p}{2}}}{p+1}| \phi |^{p+1}), \\ 
& X(\Lambda^{\frac{5-p}{2}}) = (5-p)(1-t)^{-\gamma}\Lambda^{\frac{5-p}{2}} (1-t-r_1)^{-1} (1- t)(1- t+ r_1)^{-1}.
\end{align*} 
Here $\pi^X_{\mu\nu}=\frac{1}{2}(\pa_\mu X_\nu+\pa_\nu X_\mu)$ is the deformation tensor for the metric $m$ along the vector field $X$ in $\R^{1+3}$. In view of the estimate \eqref{eq:bd4:po:3} for the weighted potential energy and the weighted energy estimate \eqref{eq:WEE:3d}, we can bound that 
\begin{align*}
 &\iint_{\mathcal{J}^+(B)}|T[\phi]^{\mu\nu} \pi_{\mu\nu}^X-\frac{1}{p+1}X(\Lambda^{\frac{5-p}{2}})| \phi|^{p+1}|dxdt \\
&\les \int_0^1 (1- t)^{-1-\gamma} \int_{|x|\leq 1-t}   |D \phi|^2+\frac{2\Lambda^{\frac{5-p}{2}}}{p+1}| \phi |^{p+1} dxdt \\ 
&\quad  + \iint_{\mathcal{J}^+(B)}  (1-t)^{-\gamma}\Lambda^{\frac{5-p}{2}} (1-t-r_1)^{-1} (1- t)(1- t+ r_1)^{-1} |\phi |^{p+1} dxdt \\ 
&\les \int_0^1 (1-t)^{-1-\gamma}\sqrt{1-t} dt+   \iint_{\mathcal{J}^+(B)}  (1-t)^{-\frac{1}{2}}\Lambda^{\frac{5-p}{2}} (1-t-r_1)^{-1}  |\phi |^{p+1} dxdt \\ 
&\les 1
\end{align*}
for all $\gamma <\frac{1}{2}$. Then for the region $\mathcal{J}^{-}(q)$, Stokes formula leads to  the energy estimate 
\begin{align*}
&\int_{\mathcal{N}^{-}(q)} (1-t)^{-\gamma}(|D_{\tilde{\Lb}}\phi|^2+|\tilde{\D}\phi|^2+\frac{2}{p+1}\Lambda^{\frac{5-p}{2}}|\phi|^{p+1}) \widetilde{r}^2d\widetilde{u}d\widetilde{\om} \\ 
&\leq  |\iint_{\mathcal{J}^{-}(q)} \pa^\mu ( T[ \phi]_{\mu\nu}  X^\nu ) d\vol| +\int_{\mathcal{J}^{-}(q)\cap B} |D\phi(0, x)|^2+\frac{2}{p+1}\Lambda^{\frac{5-p}{2}}|\phi |^{p+1} dx \les 1.
\end{align*}  
This gives the uniform weighted energy flux bound through the backward light cone $\mathcal{N}^{-}(q)$. In particular for the case when $\gamma=0$, standard energy estimate applied to the region $\mathcal{J}^{-}(q)\cap \{t_0-\tilde{r}\leq t\leq t_0\}$ shows that 
\begin{align*}
&\int_{ \mathcal{N}^{-}(q) \cap\{  t_0-\tilde{r}\leq t\leq t_0\} }  \Big( |\widetilde{\D} \phi|^2  + |D_{\widetilde{\Lb}} \phi|^2 + \Lambda^{\frac{5-p}{2}}    |\phi |^{p+1} \Big)  s^2ds d\widetilde{\om} \\ 
&\les \int_{ B(t_0-\tilde{r}, x_0)(\tilde{r})}   |D \phi|^2  + \Lambda^{\frac{5-p}{2}}    |\phi |^{p+1} dx +\iint_{ \mathcal{J}^{-}(q)\cap \{t_0-\tilde{r}\leq t\leq t_0\}}  \Lambda^{\frac{5-p}{2}} (1-t-r_1)^{-1} |\phi |^{p+1} dxdt. 
\end{align*}
Since $\phi$ is independent of the third variable $x_3$, in view of the energy estimate \eqref{eq:WE:2dcf} for $\tilde{\phi}$ in space dimension two with $\gamma =\frac{1}{2}<\frac{p-3}{2}$, we can estimate that 
 \begin{align*}
 \int_{ B(t_0-\tilde{r}, x_0)(\tilde{r})}   |D \phi|^2  + \Lambda^{\frac{5-p}{2}}    |\phi |^{p+1} dx & = \int_{\tilde{r}_1 \leq \tilde{r}}  (|D \tilde{\phi} |^2  + \Lambda^{\frac{5-p}{2}}    |\tilde{\phi}  |^{p+1} ) \sqrt{\tilde{r}^2 -\tilde{r}_1^2}dx_1 dx_2 \\ 
 &\les \sqrt{\tilde{r}}\int_{\tilde{r}_1 \leq \tilde{r}}  (|D \tilde{\phi} |^2  + \Lambda^{\frac{5-p}{2}}    |\tilde{\phi}  |^{p+1} ) \sqrt{ 1-(t_0-\tilde{r})-r_1 }dx_1 dx_2\\
 &\les \sqrt{\tilde{r}}.
\end{align*}
Here $\tilde{r}_1=\sqrt{\tilde{x}_1^2+\tilde{x}_2^2}$ and we have used the inequality 
\[
\tilde{r}-\tilde{r}_1\leq \tilde{r}-(r_{01}+r_1)\leq 1-t_0+\tilde{r}-r_1,
\]
in which $r_{01}=\sqrt{x_{01}^2+x_{02}^2}$, $x_0=(x_{01}, x_{02}, x_{03})$. 

Similarly by using the weighted potential energy bound \eqref{eq:bd4:po} for $\tilde{\phi}$ in space dimension two with $\gamma =\frac{1}{2}$, we also have 
 \begin{align*}
 & \iint_{ \mathcal{J}^{-}(q)\cap \{t_0-\tilde{r}\leq t\leq t_0\}}  \Lambda^{\frac{5-p}{2}} (1-t-r_1)^{-1} |\phi |^{p+1} dxdt \\ 
 &=\int_0^{\tilde{r}}\int_{ \tilde{r}_1\leq s} \sqrt{s^2-\tilde{r}_1}\Lambda^{\frac{5-p}{2}} (1-t-r_1)^{-1} |\tilde{\phi} |^{p+1} dx_1 dx_2 ds \\ 
 &\les  \sqrt{\tilde{r}}\int_0^{\tilde{r}}\int_{ \tilde{r}_1\leq s}  \Lambda^{\frac{5-p}{2}} (1-t-r_1)^{-\frac{1}{2}} |\tilde{\phi} |^{p+1} dx_1 dx_2 ds\\
 &\les \sqrt{\tilde{r}}.
\end{align*}
These two estimates are sufficient to conclude the Proposition. 
\end{proof}

\subsection{Geometric Kirchoff-Sobolev paramatrix and sharp decay estimates}

Let's recall the Kirchoff-Sobolev paramatrix introduced by Klainerman and Rodnianski \cite{Kl:paramatrix} in $\mathbb{R}^{1+3}$.   Under the coordinates $(\tilde{t}, \tilde{x})$ together with the null frame $\{\tilde{L}, \tilde{\Lb}, \tilde{e}_1, \tilde{e}_2\}$ centered at the point $q$,  let $h$ be  the function on the backward cone $\mathcal{N}^{-}(q)=\{(\tilde{t}, \tilde{x})| \tilde{v}=0\}$ such that 
\begin{align*}
D_{\tilde{\Lb}} h=0, \quad h(q)= |\phi(q)|^{-1}\phi(q).
\end{align*}
Here without loss of generality assume that $\phi(q)=\phi(t_0, x_0)\neq 0$. 
The paramatrix in Theorem 4.1 in \cite{Kl:paramatrix} gives the  representation formula for $\phi$ 
\begin{equation}
\label{eq:rep4phi}
\begin{split}
4\pi|\phi(q) | &=\int_{B} \langle \widetilde{r}^{-1}h\delta(\widetilde{v}), D_0 \phi\rangle-\langle D_0(\widetilde{r}^{-1}h\delta(\widetilde{v})), \phi \rangle dx +\iint_{\mathcal{N}^{-}(q)}\langle\widetilde{\lap}_A h-i\widetilde{\rho}  h, \phi\rangle\widetilde{r} d\widetilde{r}d\widetilde{\om} \\ 
&\qquad \qquad -\iint_{\mathcal{N}^{-}(q)}\langle  h, \Box_A\phi\rangle\widetilde{r} d\widetilde{r}d\widetilde{\om},
\end{split}
\end{equation}
in which $\delta(\widetilde{v})$ is the Dirac distribution and  $\widetilde{\rho}=\f12 F(\widetilde{\Lb}, \widetilde{L})$. 
Note that
\[
|h|\leq 1.
\]
The first term on the right hand side can be bounded by the initial data
\begin{align*}
&\left|\int_{B}\langle\widetilde{r}^{-1}h\delta(\widetilde{v}), D_0 \phi\rangle-\langle D_0(\widetilde{r}^{-1}h\delta(\widetilde{v})), \phi \rangle dx\right| \\ 
&= \left|\int_{B } \langle h\delta(\widetilde{v}), \widetilde{r} D_0 \phi \rangle -\langle D_0(h)\delta(\widetilde{v})+h \delta(\widetilde{v})', \widetilde{r}\phi\rangle d\widetilde{r}d\widetilde{\om}\right|\\
&=\left|\int_{B } \langle h\delta(\widetilde{v}), \widetilde{r} D_0 \phi\rangle - \langle D_0(h)\delta(\widetilde{v}), \widetilde{r}\phi\rangle+\delta(\widetilde{v})(\langle D_{\widetilde{r}}(h), \widetilde{r}\phi\rangle +\langle h, D_{\widetilde{r}}(\widetilde{r}\phi)\rangle) d\widetilde{r}d\widetilde{\om}\right|\\
&\leq \int_{\widetilde{\om}}t_0|D\phi(0, x_0+t_0\widetilde{\om})|+|\phi|(0, x_0+t_0\widetilde{\om}) d\widetilde{\om}\les 1.
\end{align*}
Here by the assumption that the initial data are compactly supported. See precise dependence on the initial weighted energy in \cite{YangYu:MKGrough}.

Before we proceed to estimate the nonlinear terms, we recall the following Lemma.
\begin{Lem}
\label{lem:bd:vga}
Fix a point $(t_0, x_0)\in \mathcal{J}^+(B)$. For all $0\leq \widetilde{r}\leq t_0$, $t=t_0-\widetilde{r}$, $r=|x_0+\widetilde{r}\widetilde{\om}|$ and $0<\ga <1$,
it holds that 
 \begin{equation*}
\int_{S_{(t, x_0)}(\widetilde{r})}(1-t-r)^{-\ga}d\widetilde{\om}\les (1-t)^{\ga}(1-t_0+r_0)^{-\ga} \widetilde{r}^{-\ga}.
\end{equation*}
\end{Lem}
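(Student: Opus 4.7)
The plan is to reduce the spherical integral to a one-dimensional one via two successive changes of variables, then bound the one-dimensional integral by elementary estimates. The case $r_0=0$ is immediate since $r\equiv\widetilde{r}$ on the sphere and the stated bound reduces to $((1-t_0+\widetilde{r})/\widetilde{r})^\gamma\geq 1$, so assume $r_0>0$ below.

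First I would parameterize $S_{(t,x_0)}(\widetilde{r})$ by the cosine $\mu$ of the angle between $\widetilde{\omega}$ and $x_0/r_0$. The relation $r^2=r_0^2+2r_0\widetilde{r}\mu+\widetilde{r}^2$ yields $r\,dr=r_0\widetilde{r}\,d\mu$, whence
\[
\int_{S_{(t,x_0)}(\widetilde{r})}(1-t-r)^{-\gamma}\,d\widetilde{\omega}=\frac{2\pi}{r_0\widetilde{r}}\int_{|r_0-\widetilde{r}|}^{r_0+\widetilde{r}}(1-t-r)^{-\gamma}\,r\,dr.
\]
Next substitute $s=1-t-r$. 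Using $t=t_0-\widetilde{r}$ and setting $a:=1-t_0-r_0\geq 0$ (from $(t_0,x_0)\in\mathcal{J}^+(B)$) and $m:=\min(r_0,\widetilde{r})$, the range becomes $s\in[a,a+2m]$. Since $1-t-s=r\leq r_0+\widetilde{r}$, pulling this factor out leaves
\[
\int_{S_{(t,x_0)}(\widetilde{r})}(1-t-r)^{-\gamma}\,d\widetilde{\omega}\,\les\,\frac{r_0+\widetilde{r}}{r_0\widetilde{r}}\int_a^{a+2m}s^{-\gamma}\,ds.
\]

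For the one-dimensional integral, I would split according to whether $a\geq m$ or $a<m$. In the first regime $s^{-\gamma}\leq a^{-\gamma}$ throughout gives $\int\leq 2m\,a^{-\gamma}$; in the second regime $\int_a^{a+2m}s^{-\gamma}ds\leq\int_0^{3m}s^{-\gamma}ds\les m^{1-\gamma}$. Both are captured by $\int_a^{a+2m}s^{-\gamma}ds\les m\cdot\max(a,m)^{-\gamma}$. Combined with $(r_0+\widetilde{r})m/(r_0\widetilde{r})=(r_0+\widetilde{r})/\max(r_0,\widetilde{r})\leq 2$, this produces the intermediate bound
\[
\int_{S_{(t,x_0)}(\widetilde{r})}(1-t-r)^{-\gamma}\,d\widetilde{\omega}\,\les\,\max(1-t_0-r_0,\,\min(r_0,\widetilde{r}))^{-\gamma}.
\]

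It then remains to establish
\[
(1-t_0+r_0)\,\widetilde{r}\,\les\,(1-t)\,\max\bigl(1-t_0-r_0,\,\min(r_0,\widetilde{r})\bigr),
\]
whose $\gamma$-th power converts the intermediate estimate to the claim. This final inequality is the main subtlety, and I would handle it by a short case analysis on whether $r_0\lessgtr\widetilde{r}$ and whether $a\lessgtr m$. The crucial input is $1-t_0\geq r_0$ from $(t_0,x_0)\in\mathcal{J}^+(B)$, which gives $1-t_0+r_0\leq 2(1-t_0)$; in each of the four regimes one then gets a uniform constant by elementary comparisons (for instance when $r_0\leq\widetilde{r}$ and $a\geq r_0$, the condition $a\geq r_0$ forces $1-t_0\leq 2a$, while $1-t\geq\widetilde{r}$ always, so the two sides differ by at most a factor of $4$).
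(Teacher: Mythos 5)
Your argument is correct, and it is worth noting that the paper itself gives no proof here: it simply cites Lemma 5.2 of \cite{YangYu:MKGrough}. Your proposal therefore supplies a self-contained, elementary verification of what the paper outsources. The reduction $d\widetilde{\omega}\mapsto \frac{2\pi}{r_0\widetilde{r}}\,r\,dr$ via $r^2=r_0^2+2r_0\widetilde{r}\mu+\widetilde{r}^2$, the substitution $s=1-t-r$ with range $[a,a+2m]$ where $a=1-t_0-r_0$ and $m=\min(r_0,\widetilde{r})$, and the bound $\int_a^{a+2m}s^{-\gamma}ds\les m\max(a,m)^{-\gamma}$ are all accurate (with the implicit constant depending on $\gamma$ through $(1-\gamma)^{-1}$, which is harmless since $\gamma$ is fixed in the applications). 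The only place where you leave detail to the reader is the final comparison $(1-t_0+r_0)\widetilde{r}\les(1-t)\max(a,m)$; I checked that the four regimes all close with a uniform constant at most $4$ — e.g.\ when $r_0\leq\widetilde{r}$ one has $\max(a,r_0)\geq\frac{1}{2}(1-t_0)$ regardless of which term dominates, and when $r_0>\widetilde{r}$ one uses either $\max\geq\widetilde{r}$ directly or $\widetilde{r}<a$ together with $1-t_0+r_0\leq 2(1-t_0)\leq 2(1-t)$. So the proof is complete in substance; if you write it up, spell out those four cases rather than only the one you exhibit.
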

\begin{proof}
See for example Lemma 5.2 in    \cite{YangYu:MKGrough}.
\end{proof}

Now for the pure power nonlinearity, 
denote 
\[
\mathcal{M}(t)=\sup\limits_{|x|\leq 1-t} |\phi(t, x) (1-t+r)^{\frac{5-p+2\ep }{2p_1'}}|^{p_1'}
\]
with $p_1'$ verifying the relation 
\begin{equation}
\label{eq:def4p1}
\frac{p-5-\ep_1}{8} (p+1-2p_1')=-\ep.
\end{equation}
Let $p_1=\frac{p_1'}{p_1'-1}$ be the dual exponent for $p_1'$. In view of the weighted energy flux bound of Proposition \ref{prop:EF:cone:gamma}, we estimate that 
\begin{align*}
  &|\int_{\mathcal{N}^{-}(q)}\langle  h, \Box_A\phi\rangle\widetilde{r}   d\tilde{r}d\tilde{\om}| \leq \int_{\mathcal{N}^{-}(q)}\Lambda^{\frac{5-p}{2}}|\phi|^{p}\ \tilde{r} d\tilde{r}d\tilde{\om}\\
  &\leq \left(\int_{\mathcal{N}^{-}(q)} (1-t)^{-\gamma}\Lambda^{\frac{5-p}{2}}|\phi|^{p+1}\ \tilde{r}^{2} d\tilde{r}d\tilde{\om}\right)^{\frac{1}{p_1}}  \cdot \left(\int_{\mathcal{N}^{-}(q)}(1-t)^{\frac{p_1'}{p_1 }\gamma}\Lambda^{\frac{5-p}{2}}|\phi|^{ p+1-p_1'}\ \tilde{r}^{2-p_1'} d\tilde{r}d\tilde{\om}\right)^{\frac{1}{p_1'}}\\ 
  &\les  \left(\int_{0}^{t_0} \mathcal{M}(t) \int_{S_{(t_0-\tilde{r}, x_0)(\tilde{r})}} (1-t)^{\frac{p_1'}{p_1 }\gamma-\frac{5-p+2\ep }{2}}\Lambda^{\frac{5-p}{2}}|\phi|^{ p+1-2p_1'}\ \tilde{r}^{2-p_1'}   d\tilde{\om} d\tilde{r} \right)^{ \frac{1}{p_1'}}.
\end{align*}
Now we need to estimate the integral on the sphere $S_{(t_0-\tilde{r}, x_0)(\tilde{r})}$ for fixed $\tilde{r}$. Since  $2p_1'<p+1$, by using the rough pointwise decay estimate \eqref{eq:pt:phi:cf:3d}, we have 
\begin{align*}
\Lambda^{\frac{5-p}{2}}|\phi|^{ p+1-2p_1'} & \les (1-t)^{\frac{p-5}{2}}(1-t-r_1)^{\frac{p-5}{2}} (1- t- r_1)^{\frac{p-5-\ep_1}{8} (p+1-2p_1')}(1- t)^{-\frac{p+1-2p_1'}{2}} \\ 
&\les (1-t)^{ p_1' -3} (1-t-r_1)^{\frac{p-5}{2} -\ep}. 
\end{align*}
For sufficiently small $\ep$, it holds that
\[
0<\frac{5-p}{2}+\ep<1.
\]
Then Lemma \ref{lem:bd:vga} implies that 
\begin{align*}
\int_{S_{(t_0-\tilde{r}, x_0)(\tilde{r})}}  (1-t-r_1)^{\frac{p-5}{2}-\ep } d\tilde{\om }\les (1-t)^{\frac{5-p}{2}+\ep } \tilde{r}^{\frac{p-5}{2}- \ep} (1-t_0+r_0)^{\frac{p-5}{2}- \ep}.
\end{align*}
Here note that $r_1=\sqrt{x_1^2+x_2^2}\leq r$. Taking $\gamma =\frac{1}{2}-\ep $, we then have the bound 
\begin{align*}
&\int_{S_{(t_0-\tilde{r}, x_0)(\tilde{r})}} (1-t)^{\frac{p_1'}{p_1 }\gamma-\frac{5-p+2\ep }{2}}\Lambda^{\frac{5-p}{2}}|\phi|^{ p+1-2p_1'}\ \tilde{r}^{2-p_1'}   d\tilde{\om} \\ 
&\les (1-t)^{\frac{5-p}{2}+\ep } \tilde{r}^{\frac{p-5}{2}- \ep} (1-t_0+r_0)^{\frac{p-5}{2}- \ep} (1-t)^{ p_1' -3} (1-t)^{\frac{p_1'}{p_1 }\gamma-\frac{5-p+2\ep }{2}} \ \tilde{r}^{2-p_1'}  \\
&\les  (1-t_0+r_0)^{\frac{p-5}{2}- \ep} \tilde{r}^{-1+\frac{(p-1-\ep_1)\ep}{5+\ep_1-p} }(1-t)^{ \frac{3p-11}{4}-\frac{p-1}{2}\ep -\frac{6\ep}{5+\ep_1-p} } \\
&\les (1-t_0+r_0)^{\frac{p-5}{2}- \ep} \tilde{r}^{-1+\ep } 
\end{align*}
for sufficiently small $\ep>0$ and $\ep_1>0$. Here recall that we assume $4<p\leq 5$. We remark here that for the flat case with vanishing connection field $A=0$ in $\R^{1+2}$, sharp time decay for solution to the original equation \eqref{eq:CSH:p:2d} was shown for $p>\frac{11}{3}$. This is also suggested by the method here. However the proof needs to be modified and will be more involved when $p\leq 4$. We are not pursuing this in this work.

To summarize, the above computations show that 
\begin{align}
\label{eq:spt:pterm}
  (1-t_0+r_0)^{\frac{5+2\ep -p}{2}}\left|\int_{\mathcal{N}^{-}(q)}\langle  h, \Box_A\phi\rangle\widetilde{r}   d\tilde{r}d\tilde{\om}\right|^{p_1'} \les \int_0^{t_0}\mathcal{M}(t_0-\tilde{r}) \tilde{r}^{-1+\ep }d\tilde{r}.
\end{align}
Here by our notation  $t=t_0-\tilde{r}$.

\bigskip 

Next we control the second integral in \eqref{eq:rep4phi} arising from the connection field $A$.  Since the representation formula \eqref{eq:rep4phi} relies   on the angular derivative of $h$  on the cone $\mathcal{N}^{-}(q)$, we commute the transport equation for $h$ with covariant angular derivative. Under the null frame $\{\tilde{L}, \tilde{\Lb}, \tilde{e}_1, \tilde{e}_2\}$ and by the definition of $h$, we have 
\[
\tilde{\Lb}\langle h, h \rangle =2\langle D_{\tilde{\Lb}}h, h\rangle =0.
\]
Since $\langle h(q), h(q)\rangle =1$,  it holds that on the cone $\mathcal{N}^{-}(q)$
\[
\langle h, h \rangle =|h|^2=1.
\]
Taking angular derivative, we have 
\[
\tilde{r}\tilde{e}_j \langle h, h \rangle =2\langle D_{\tilde{r}\tilde{e}_j} h, h \rangle =0. 
\]
To understand $D_{\tilde{r}\tilde{e}_j} h$, note that $[\tilde{\Lb}, \tilde{r}\tilde{e}_j]=0$. Commuting with the transport equation for $h$, we derive
\begin{align*}
D_{\tilde{\Lb}}(D_{\tilde{r}\tilde{e}_j} h)=[D_{\tilde{\Lb}}, D_{\tilde{r}\tilde{e}_j}]h=i F(\tilde{\Lb}, \tilde{r}\tilde{e}_j)h. 
\end{align*}
In particular we have  
\[
\tilde{\Lb}\langle D_{\tilde{r}\tilde{e}_j} h, ih\rangle =  \langle i F(\tilde{\Lb}, \tilde{r}\tilde{e}_j)h , ih\rangle=F(\tilde{\Lb}, \tilde{r}\tilde{e}_j),
\]
from which we derive that 
\begin{align*}
\langle D_{\tilde{r}\tilde{e}_j} h, ih\rangle =-\int_0^{\tilde{r}} F(\tilde{\Lb},  \tilde{e}_j)s ds.
\end{align*}
We then show that 
\begin{align*}
\tilde{\Lb} |D_{\tilde{r}\tilde{e}_j} h|^2= 2 \langle D_{\tilde{\Lb}}D_{\tilde{r}\tilde{e}_j} h, D_{\tilde{r}\tilde{e}_j} h\rangle & = 2 \langle i F(\tilde{\Lb}, \tilde{r}\tilde{e}_j)h , D_{\tilde{r}\tilde{e}_j} h\rangle \\ 
&=-2 F(\tilde{\Lb}, \tilde{r}\tilde{e}_j) \int_0^{\tilde{r}} F(\tilde{\Lb},  \tilde{e}_j) sds\\
&=\tilde{\Lb} \big( \int_0^{\tilde{r}} F(\tilde{\Lb},  \tilde{e}_j) sds\big)^2,
\end{align*} 
which leads to 
\begin{align*}
 |D_{\tilde{r}\tilde{e}_1} h|^2+ |D_{\tilde{r}\tilde{e}_2} h|^2=  \sum\limits_{j,k} \big( \int_0^{\tilde{r}} F(\tilde{\Lb},  \tilde{\Omega}_{jk}) ds\big)^2.
\end{align*} 
Here $\tilde{\Omega}_{jk}$ are the angular momentum vector field 
$$\tilde{\Omega}_{jk}=\tilde{x}_j\pa_{\tilde{x}_k}-\tilde{x}_k\pa_{\tilde{x}_j}=\tilde{x}_j\pa_{k}-\tilde{x}_k\pa_{_j}.$$ 
To compute $\tilde{r}^2\lap_A h $, commute with the angular derivative again. We derive that 
\begin{align*}
D_{\tilde{\Lb}}(\tilde{r}^2\lap_A h)&=D_{\tilde{\Lb}}(D^{\tilde{r}\tilde{e}_j}D_{\tilde{r}\tilde{e}_j} h)=[D_{\tilde{\Lb}}, D^{\tilde{r}\tilde{e}_j}]D_{\tilde{r}\tilde{e}_j}h+D^{\tilde{r}\tilde{e}_j}(iF(\tilde{\Lb}, \tilde{r}\tilde{e}_j)h)\\
&=2iF(\tilde{\Lb}, \tilde{r}\tilde{e}_j)D_{\tilde{r}\tilde{e}_j}h+ih \tilde{r}\tilde{e}_j(F(\tilde{\Lb}, \tilde{r}\tilde{e}_j))\\
&=2i F(\tilde{\Lb}, \tilde{r}\tilde{e}_j)D_{\tilde{r}\tilde{e}_j}h+ih (D_{\tilde{\Lb}}(\tilde{r}^2\tilde{\rho})+\tilde{r}^2\pa^\mu F(\tilde{\Lb}, \pa_{\mu})).
\end{align*}
In particular we obtain the transport equation 
\begin{align*}
D_{\tilde{\Lb}}(\tilde{r}^2(\lap_A h-i\tilde{\rho} h)) 
&=2i F(\tilde{\Lb}, \tilde{r}\tilde{e}_j)D_{\tilde{r}\tilde{e}_j}h+ih \tilde{r}^2 \pa^\mu F(\tilde{\Lb}, \pa_{\mu}).
\end{align*}
Therefore by using the definition $D_{\tilde{\Lb}} h=0$, we have 
\begin{align*}
\tilde{\Lb}  \langle \tilde{r}^2(\lap_A h-i\tilde{\rho} h), i h \rangle & =  \langle D_{\tilde{\Lb}}(\tilde{r}^2(\lap_A h-i\tilde{\rho} h) ), i h \rangle \\ 
& =  \langle  2i F(\tilde{\Lb}, \tilde{r}\tilde{e}_j)D_{\tilde{r}\tilde{e}_j}h+ih \tilde{r}^2 \pa^\mu F(\tilde{\Lb}, \pa_{\mu}), i h \rangle \\ 
&=    \tilde{r}^2 \pa^\mu F(\tilde{\Lb}, \pa_{\mu}).
\end{align*}
This implies that 
\[
\langle \tilde{r}^2(\lap_A h-i\tilde{\rho} h), i h \rangle=-\int_0^{\tilde{r}}  \tilde{s}^2 \pa^\mu F(\tilde{\Lb}, \pa_{\mu}) ds.
\]
 On the other hand recall that $\langle D_{\tilde{r}\tilde{e}_j} h, h\rangle =0$, it holds that  
\begin{align*}
 \langle \tilde{r}^2(\lap_A h-i\tilde{\rho} h), h \rangle = \langle \tilde{r}^2 \lap_A h , h \rangle
 =\tilde{r}\tilde{e}_j\langle D_{\tilde{r}\tilde{e}_j} h, h\rangle -|D_{\tilde{r}\tilde{e}_j} h|^2= -|D_{\tilde{r}\tilde{e}_j} h|^2.
\end{align*}
Therefore together with the previous identity, we  conclude that 
\begin{align}
\notag 
|\tilde{r}^2(\lap_A h-i\tilde{\rho} h)| & \leq |\int_0^{\tilde{r}}  \tilde{s}^2 \pa^\mu F(\tilde{\Lb}, \pa_{\mu}) ds|+ |D_{\tilde{r}\tilde{e}_1} h|^2+|D_{\tilde{r}\tilde{e}_2} h|^2 \\
\label{eq:lapA:0} 
&\leq |\int_0^{\tilde{r}}  \tilde{s}^2 \pa^\mu F(\tilde{\Lb}, \pa_{\mu}) ds|+ \sum\limits_{j,k} \big( \int_0^{\tilde{r}} F(\tilde{\Lb},  \tilde{\Omega}_{jk}) ds\big)^2.
\end{align}
Basically we need to control the $L^1$ norm of $\tilde{r}^2(\lap_A h-i\tilde{\rho} h)$ on the sphere $S_{(t_0-\tilde{r}, x_0)(\tilde{r})}$. The first term on the right hand side of the above inequality is comparatively easier. We first need to compute the divergence of the Maxwell field $F$. By the definition of $A$ and $F$ (which are independent of $x_3$ and $A_3=0$) and noting that
\begin{align*}
\pa_{\tilde{t}}=\pa_t,\quad \pa_{\tilde{x}_j}=\pa_{x_j}=\pa_j,
\end{align*}
 we have 
\begin{align*}
&\pa^\mu F(\tilde{\Lb}, \pa_\mu)\\
&=-\pa_t F(\pa_t-\tilde{\omega}_j \pa_j, \pa_t)+\pa_1 F(\pa_t-\tilde{\omega}_j \pa_j, \pa_1)+\pa_2 F(\pa_t-\tilde{\omega}_j\pa_j, \pa_2)\\
&= \pa_t (\tilde{\omega}_1 F_{10}+\tilde{\omega}_2 F_{20})+\pa_1(F_{01}-\tilde{\omega}_2 F_{21})+\pa_2 (F_{02}-\tilde{\omega}_1 F_{12})\\
&=(\pa_1-\tilde{\omega}_1 \pa_t)\Im(\phi\cdot \overline{D_2\phi})+(\tilde{\omega}_2\pa_1-\tilde{\omega}_1\pa_2) \Im(\phi\cdot \overline{D^0\phi})+(\tilde{\omega}_2\pa_t-\pa_2) \Im(\phi\cdot \overline{D_1\phi})\\
&=2\Im(D_1\phi\cdot \overline{D_2\phi})+\Im(\phi \cdot\overline{[D_1, D_2]\phi})+2\tilde{\omega}_1 \Im(D_2\phi\cdot \overline{D_0\phi})+\tilde{\omega}_1 \Im(\phi \cdot\overline{[D_2, D_0]\phi}) \\
&\quad +2\tilde{\omega}_2 \Im(D_0\phi\cdot \overline{D_1\phi})+\tilde{\omega}_2 \Im(\phi \cdot\overline{[D_0, D_1]\phi})\\
&= \Im(D_1\phi\cdot \overline{D_2\phi}-D_2\phi\cdot \overline{D_1\phi})-(F_{12}+\tilde{\omega}_1 F_{20}+\tilde{\omega}_2 F_{01})|\phi|^2   +2 \Im(D_0\phi\cdot \overline{ (\tilde{\omega}_2 D_1\phi-\tilde{\omega}_1 D_2\phi ) }) \\
&=2\Im(\tilde{\D}_1\phi\cdot \overline{\tilde{\D}_2\phi})- \Im(\phi\cdot \overline{( \tilde{\omega}_1 D_1\phi +\tilde{\omega}_2 D_2\phi -D_0\phi  )})  |\phi|^2    \\
&\quad +2\Im(D_{\tilde{r}}\phi \cdot \overline{(\tilde{\omega}_1\tilde{\D}_2\phi -\tilde{\omega}_2 \tilde{\D}_1\phi  ) })+2 \Im(D_0\phi\cdot \overline{(\tilde{\omega}_2 D_1\phi-\tilde{\omega}_1 D_2\phi )}).
\end{align*}
Here $\tilde{\D}_j=D_{\tilde{x}_j}-\tilde{\omega}_j D_{\tilde{r}}$. Since $\phi$ is independent of $x_3$ and $\pa_{\tilde{x}_\mu}=\pa_{x_\mu }$, we have 
\begin{align*}
&\tilde{\omega}_1 D_1\phi +\tilde{\omega}_2 D_2\phi -D_0\phi=\tilde{\omega}_j \tilde{D}_j\phi  -\tilde{D}_0\phi=-D_{\tilde{\Lb}}\phi,\\
& \tilde{\omega}_1\tilde{\D}_2\phi -\tilde{\omega}_2 \tilde{\D}_1\phi = \tilde{\omega}_1\tilde{D}_2\phi -\tilde{\omega}_2 \tilde{D}_1\phi =-(\tilde{\omega}_2 D_1\phi-\tilde{\omega}_1 D_2\phi )).
\end{align*}
Therefore we can write that 
\begin{align*}
\Im(D_{\tilde{r}}\phi \cdot \overline{(\tilde{\omega}_1\tilde{\D}_2\phi -\tilde{\omega}_2 \tilde{\D}_1\phi  ) })+ \Im(D_0\phi\cdot \overline{(\tilde{\omega}_2 D_1\phi-\tilde{\omega}_1 D_2\phi )})= -\Im(D_{\tilde{\Lb}}\phi \cdot \overline{(\tilde{\omega}_1\tilde{\D}_2\phi -\tilde{\omega}_2 \tilde{\D}_1\phi  ) }).
\end{align*}
We thus can bound that 
\begin{align*}
|\pa^\mu F(\tilde{\Lb}, \pa_\mu )|\les  |\phi|^6+ |D_{\tilde{\Lb}}\phi|^2+|\tilde{\D}\phi|^2.
\end{align*}
Using Sobolev embedding on the surface $\mathcal{N}^{-}(q)$ together with the local energy decay of Proposition \ref{prop:EF:cone:gamma}, we can estimate that 
\begin{align}
\notag
\int_{0}^{\tilde{r}}\int_{\tilde{\omega}} s^2|\pa^\mu F(\tilde{\Lb}, \pa_\mu)|ds d\tilde{\omega} &\les \int_{0} ^{\tilde{r}}\int_{\tilde{\omega}} (|\phi|^6+ |D_{\tilde{\Lb}}\phi|^2+|\tilde{\D}\phi|^2)s^2 ds d\tilde{\omega}\\
\label{eq:lapA:dF}
&\les   \int_{ \mathcal{N}^{-}(q)\cap\{t_0-\tilde{r}\leq t\leq t_0\}} ( |D_{\tilde{\Lb}}\phi|^2+|\tilde{\D}\phi|^2+ \Lambda^{\frac{5-p}{2}}    |\phi |^{p+1})s^2 ds d\tilde{\omega} \\
\notag
&\les \sqrt{\tilde{r}}. 
\end{align}
Next for the second term on the right hand side of \eqref{eq:lapA:0}, we  compute that 
\begin{align*}
F(\tilde{\Lb}, \tilde{\Omega}_{12})&=F(\pa_t-\tilde{\omega}_j \pa_j, \tilde{x}_1\pa_2-\tilde{x}_2\pa_1)=\tilde{x}_1 F_{02}-\tilde{x}_2 F_{01}-\tilde{\omega}_j\tilde{x}_1F_{j2}+\tilde{\omega}_j\tilde{x}_2F_{j1}\\
&=\tilde{r}(\tilde{\omega}_1^2+\tilde{\omega}_2^2)\Im(\phi\cdot \overline{D_{\tilde{t}}\phi})-\tilde{x}_1\Im(\phi\cdot \overline{\tilde{\omega}_1 D_{\tilde{r}}\phi +\tilde{\D}_1\phi  })-\tilde{x}_2\Im(\phi\cdot \overline{\tilde{\omega}_2 D_{\tilde{r}}\phi +\tilde{\D}_2\phi  }) \\ 
&= \tilde{r}(\tilde{\omega}_1^2+\tilde{\omega}_2^2)\Im(\phi\cdot \overline{D_{\tilde{\Lb}}\phi})-\tilde{x}_1\Im(\phi\cdot \overline{ \tilde{\D}_1\phi  })-\tilde{x}_2\Im(\phi\cdot \overline{ \tilde{\D}_2\phi  }),\\
&=\tilde{r} \Im(\phi\cdot \overline{D_{\tilde{\Lb}}\phi})-\tilde{r}\tilde{\omega}_3 (\Im(\phi\cdot \overline{ \tilde{\D}_3\phi  }) +\tilde{\omega}_3 \Im(\phi\cdot \overline{D_{\tilde{\Lb}}\phi})),\\
F(\tilde{\Lb}, \tilde{\Omega}_{k3})&=F(\pa_t-\tilde{\omega}_j \pa_j, \tilde{x}_k\pa_3-\tilde{x}_3\pa_k)= -\tilde{x}_3 F_{0k} +\tilde{\omega}_j\tilde{x}_3F_{jk}\\
&= \tilde{x}_3 \varepsilon_{0jk}(\Im(\phi\cdot \overline{D_{j}\phi })-\tilde{\omega}_j\Im(\phi\cdot \overline{D_{\tilde{t}}\phi}))\\ 
&= \tilde{r} \tilde{\omega}_3\varepsilon_{0jk} (  \Im(\phi\cdot \overline{ \tilde{\D}_j \phi  }) -\tilde{\omega}_j\Im(\phi\cdot \overline{D_{\tilde{\Lb}}\phi})   ).
\end{align*}
In particular we can bound that 
\begin{align}
\label{eq:lapA:Fr}
|F(\tilde{\Lb}, \tilde{\Omega}_{12})- \tilde{r} \Im(\phi\cdot \overline{D_{\tilde{\Lb}}\phi})|+|F(\tilde{\Lb}, \tilde{\Omega}_{k3})| \les |\tilde{\omega}_3|\tilde{r}(|D_{\tilde{\Lb}}\phi|+|\tilde{\D}\phi|)|\phi|. 
\end{align}
The idea now is that we rely on the rough pointwise decay estimate \eqref{eq:pt:phi:cf:3d} and control the tangential derivative by using the energy flux of Proposition \ref{prop:EF:cone:gamma} for the solution $\phi$. The crucial observation now is that $\tilde{\omega}_3$ provides the necessary smallness which cancels the possible blow up of $\phi$ on the boundary. In fact under spherical coordinates, we can write that 
\begin{align*}
x_1=x_{01}+s\sin\theta_1 \cos\theta_2,\quad x_2=x_{02}+s\sin\theta_1 \sin\theta_2,\quad x_3=x_{03}+s\cos\theta_1. 
\end{align*}
In view of the rough pointwise decay estimate \eqref{eq:pt:phi:cf:3d} for $\phi$, for $\gamma=\frac{1}{2}-\frac{\ep_1}{4}$, we bound that 
\begin{align*}
&\int_0^{\tilde{r}} \tilde{\omega}_3^2 (1-t )^{\gamma } |\phi(t_0-s, x)|^2 ds \\ 
 &\les  \cos^2\theta_1 \int_0^{\tilde{r}}    (1-t-r_1)^{\frac{p-5-\ep_1}{4}} (1-t)^{-1+\gamma } ds \\
& \les  \cos^2\theta_1 \tilde{r}^{-1+\gamma}\int_0^{\tilde{r}}    (s -s|\sin\theta_1|)^{\frac{p-5-\ep_1}{4}}  ds\\ 
&\les \tilde{r}^{-1+\gamma+\frac{p-1-\ep_1}{4}}\\ 
&\les \tilde{r}^{ \frac{p-3-2\ep_1}{4}}.
\end{align*} 
Here by the definition, $t=t_0-s$, $r_1=\sqrt{x_1^2+x_2^2}$, $r_{01}=\sqrt{x_{01}^2+x_{02}^2}$. 
Therefore by using the weighted energy flux bound of Proposition \ref{prop:EF:cone:gamma} we can show that 
\begin{align}
\notag
&\int_{S_{(t_0-\tilde{r}, x_0)(\tilde{r})}} \Big(  \int_0^{\tilde{r}} |\tilde{\omega}_3|s(|D_{\tilde{\Lb}}\phi|+|\tilde{\D}\phi|)|\phi| ds \Big)^2 d\tilde{\omega}\\
\notag
 & \leq \int_{\tilde{\omega}}    \int_0^{\tilde{r}} s^2(|D_{\tilde{\Lb}}\phi|^2+|\tilde{\D}\phi|^2) (1-t)^{-\gamma }ds    \cdot \int_0^{\tilde{r}}\tilde{\omega}_3^2 (1-t )^{\gamma } |\phi(t, x)|^2 ds   d\tilde{\omega} \\ 
 \label{eq:lap:dl}
& \les \tilde{r}^{ \frac{p-3-2\ep_1}{4}} \int_{ \mathcal{N}^{-}(q)} (|D_{\tilde{\Lb}}\phi|^2+|\tilde{\D}\phi|^2) (1-t)^{-\gamma } s^2 ds d\tilde{\omega} \\ 
\notag
&\les \tilde{r}^{ \frac{p-3-2\ep_1}{4}}.
\end{align}
Finally in view of \eqref{eq:lapA:Fr}, it remains to control
\[
\int_{ S_{(t_0-\tilde{r}, x_0)(\tilde{r})} } \left(\int_0^{\tilde{r}}s \Im(\phi\cdot \overline{D_{\tilde{\Lb}}\phi })ds\right)^2 d\tilde{\omega}.
\]
The problem for this term is that there is no smallness we can use. However, the special structure allows us to apply the sharp geometric trace theorem developed by Klainerman-Rodnianski in \cite{Klainerman06:trace}.
\begin{Thm}[Geometric bilinear trace theorem]
\label{them:Gbilinear}
Let $\mathcal{H}$ be the conic cone $\{(t, x)| t+|x|=1, 0\leq t\}$ and $A$ a 1-form in $\R^{1+3}$. Then there exists a constant $C$ such that 
\begin{align*}
\int_{|\omega|=1} \Big(\int_0^1 (f\cdot D_{\Lb} g)(s, s\omega) ds\Big)^2  d\omega \leq C  \mathcal{N}_1(f) \cdot \mathcal{N}_1(g)
\end{align*}
for any sufficiently  smooth function $f, g$ on $\R^{1+3}$. 
Here the energy $\mathcal{N}_1(f)$ is defined for any $0\leq \tau\leq 1$ 
\[
\mathcal{N}_{\tau}(f)= \int_0^{\tau}  \int_{|\omega|=1} \tau(|D_{\Lb} f|^2 +|\D f|^2) +\tau^{-1}|f|^2 ds d\omega 
\]
under the null frame $\{L=\pa_t+\pa_r,\quad \Lb=\pa_t-\pa_r, e_1, e_2\}$ and $D$ is the covariant derivative associated to the connection field $A$. 
\end{Thm}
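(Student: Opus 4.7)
The plan is to follow the strategy of Klainerman--Rodnianski \cite{Klainerman06:trace}. As a first reduction, I would eliminate the gauge potential $A$ along null generators of $\mathcal{H}$. Since $\Lb$ is tangent to the generators, introduce a parallel transport $U(s, \omega)$ along each generator with $D_{\Lb} U = 0$ and $U(0, \omega) = \mathrm{Id}$; then $|U| = 1$ and $D_{\Lb}(U \tilde{g}) = U \pa_s \tilde{g}$ if we set $\tilde{g} = U^{-1} g$, and similarly $\tilde{f} = U^{-1} f$. Because the right--hand side of the estimate depends only on the gauge--invariant quantities $|f|$, $|g|$, $|\D f|$, $|\D g|$, $|D_{\Lb} f|$, $|D_{\Lb} g|$, this reduction is lossless modulo harmless tangential curvature commutator terms in $\D$, which can be absorbed into the angular derivatives.

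Next, perform a dyadic decomposition of the radial variable, writing $[0,1] = \bigcup_{k\geq 0} I_k$ with $I_k = [2^{-k-1}, 2^{-k}]$, and rescale each $I_k$ to the fixed annulus $[1/2, 1]$ by $s \mapsto 2^k s$. The weight structure of $\mathcal{N}_\tau$ is designed so that $\mathcal{N}_{2^{-k}}$ restricted to $I_k$ becomes a scale--invariant quantity on the rescaled piece. It then suffices to prove the model bilinear inequality
\[
\int_{|\omega|=1}\Big(\int_{1/2}^{1} \tilde{f}\cdot \pa_s \tilde{g}\, ds\Big)^2 d\omega \les \mathcal{M}(\tilde f)\cdot \mathcal{M}(\tilde g),
\]
with $\mathcal{M}(h) = \int_{1/2}^{1}\int_{|\omega|=1}(|\pa_s h|^2 + |\D h|^2 + |h|^2)\, ds\, d\omega$, and then reassemble the contributions from different $k$ using near--orthogonality of the dyadic shells on both sides of the bound.

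To establish the model inequality, I would expand $\tilde{f}$ and $\tilde{g}$ via angular Littlewood--Paley projectors $P^{\mathrm{ang}}_j$ on $S^2$ and analyze the bilinear form frequency by frequency. High--high interactions ($j \approx j'$) are handled by the orthogonality of distinct angular frequencies combined with Cauchy--Schwarz in $s$, placing one factor in $L^2_s L^2_\omega$ and the other in $L^\infty_\omega L^2_s$; for low--high interactions one appeals to the sphere Bernstein inequality $\|P^{\mathrm{ang}}_j h\|_{L^\infty(S^2)} \les 2^j\|P^{\mathrm{ang}}_j h\|_{L^2(S^2)}$ together with the trace inequality relating $\|h(s,\cdot)\|_{L^2(S^2)}$ to $\mathcal{M}(h)^{1/2}$. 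The geometric gains in $|j-j'|$ are what allow summability over angular frequencies.

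The main obstacle, and the reason the estimate is genuinely delicate, is sharpness. A naive approach that applies Cauchy--Schwarz in $s$ and then a Sobolev embedding on $S^2$ loses a logarithm when summed over the dyadic shells $I_k$, and any use of an intermediate $L^\infty$ bound breaks the scaling. It is precisely the bilinear, frequency--localized structure on the sphere, combined with the fact that $\Lb$ is tangential to the null hypersurface (so that the quantity $\int f \cdot D_{\Lb} g \, ds$ is a genuine one--dimensional trace along generators), which yields the clean product bound $\mathcal{N}_1(f)\mathcal{N}_1(g)$ on the right--hand side. This is the content of the Klainerman--Rodnianski geometric trace theorem.
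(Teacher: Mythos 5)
First, a point of comparison: the paper does not prove this statement at all. It is quoted verbatim from Klainerman--Rodnianski and the ``proof'' in the paper is the single line ``See Theorem 4.1 in \cite{Klainerman06:trace}'', together with the remark that the original result is stronger and covariant. So your attempt is not being measured against an argument in the paper but against the cited work, whose proof rests on a covariant, heat-flow-based geometric Littlewood--Paley theory on the sections of the cone. Your sketch is in the right circle of ideas (bilinear angular frequency analysis, Bernstein on the sphere, exploiting that $\Lb$ is tangent to the generators), but two of your reductions contain genuine gaps.

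The first gap is the gauge-elimination step. Parallel transport along the generators does set the $\Lb$-component of the connection to zero, and $|f|$, $|g|$, $|D_{\Lb}f|$, $|\D f|$ are indeed gauge invariant; but what you need for the subsequent \emph{flat} angular Littlewood--Paley analysis is control of the free angular derivative $|\nabla\mkern-11mu/\, \tilde g|$, which differs from the covariant $|\D g|$ by $|A_{e}||g|$ in the transported gauge, where $A_{e}$ contains $\int_0^{s}F(\Lb,e_A)\,ds'$. This integrated curvature flux is exactly the quantity that is \emph{not} controlled by $\mathcal{N}_1$ (and, in the Chern--Simons application of this very paper, is the term the authors single out as having no energy estimate). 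So the reduction is not ``lossless modulo harmless commutators''; one must run the Littlewood--Paley theory covariantly, as Klainerman--Rodnianski do. The second gap is the dyadic decomposition in $s$ with reassembly. The left-hand side is $\int_{S^2}\bigl(\sum_k\int_{I_k}f\cdot D_{\Lb}g\,ds\bigr)^2d\omega$; there is no orthogonality between the shell contributions as functions of $\omega$, so the best available reassembly is Minkowski's inequality, giving $\sum_k\mathcal{M}_k(f)^{1/2}\mathcal{M}_k(g)^{1/2}$ with $\mathcal{M}_k$ the rescaled annulus norm. But the rescaled zeroth-order term on shell $k$ is $2^{k}\int_{I_k}|f|^2\,ds$, and $\sum_k 2^{k}\int_{I_k}|f|^2\,ds\sim\int_0^1 s^{-1}|f|^2\,ds$, which diverges logarithmically at the vertex unless $f$ vanishes there (take $f\equiv 1$: $\mathcal{N}_1(f)$ is finite while $\sum_k\mathcal{M}_k(f)=\infty$). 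This is precisely the logarithmic loss you correctly identify as the enemy, and your scheme reintroduces it at the reassembly stage. Repairing either gap essentially forces you back to the global-in-$s$, covariant argument of \cite{Klainerman06:trace}, which is why the paper simply cites that result.
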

See Theorem 4.1 in   \cite{Klainerman06:trace}. We emphasize here that the original version of Theorem 4.1 in  \cite{Klainerman06:trace} is much stronger and applies to the more general setting of Lorentzian manifolds. The connection field $A$ plays the geometric  role of a Lorentzian metric.

To apply the above theorem, consider the cone $\mathcal{N}^{-}(q)\cap \{t_0-\tilde{r}\leq t\leq t_0\}$. Scaling implies that 
 \begin{align*}
\int_{|\omega|=1} \Big(\int_0^{\tilde{r}} (f\cdot D_{\Lb} g)(s, s\omega) ds\Big)^2  d\omega \leq C  \mathcal{N}_{\tilde{r}}(f) \cdot \mathcal{N}_{\tilde{r}}(g).
\end{align*}
Note that 
\[
s^2 \Im(\phi\cdot \overline{D_{\tilde{\Lb}}\phi})=\Im( (s\phi) \cdot  \overline{D_{\tilde{\Lb}}(s\phi)}).
\]
In view of Proposition \ref{prop:EF:cone:gamma}, we can bound that 
\begin{align*}
\mathcal{N}_{\tilde{r}}(s\phi)&=\int_0^{\tilde{r}}\int_{|\tilde{\omega}|=1} \tilde{r}(|D_{\tilde{\Lb}}(s\phi)|^2+|\tilde{\D}(s\phi)|^2 )+\tilde{r}^{-1} |s\phi|^2 dsd\tilde{\omega}\\ 
&=\tilde{r}\int_0^{\tilde{r}}\int_{|\tilde{\omega}|=1} (|D_{\tilde{\Lb}}\phi|^2+|\tilde{\D} \phi|^2)s^2 dsd\tilde{\omega}+ \tilde{r}^{-1}\int_0^{\tilde{r}}\int_{|\tilde{\omega}|=1} |\phi|^2 s^2 dsd\tilde{\omega}+\tilde{r}^2\int_{|\tilde{\omega}|=1}  |\phi|^2 d\tilde{\omega}|_{s=\tilde{r}}\\ 
&\leq \tilde{r}\int_0^{\tilde{r}}\int_{|\tilde{\omega}|=1} (|D_{\tilde{\Lb}}\phi|^2+|\tilde{\D} \phi|^2+|\phi|^{p+1})s^2 dsd\tilde{\omega}+ \tilde{r} \int_0^{\tilde{r}}\int_{|\tilde{\omega}|=1} |D_{\tilde{\Lb}}\phi|^2 s^2 dsd\tilde{\omega}\\ 
&\les \tilde{r}\sqrt{\tilde{r}}.
\end{align*}
The second last step follows from a type of Hardy's inequality by controlling $\phi$ with its derivative $|D_{\tilde{\Lb}}\phi|$ and the value on the sphere $s=\tilde{r}$. 

Now denote 
\[
G(\tilde{r}, \tilde{\omega})=\int_0^{\tilde{r}} \Im((s\phi)\cdot \overline{D_{\tilde{\Lb}}(s\phi)}) ds. 
\]
In particular by using the rough bound \eqref{eq:pt:phi:cf:3d}, we have 
\begin{align*}
\lim\limits_{\tilde{r}\rightarrow 0} \tilde{r}^{-1}G(\tilde{r}, \tilde{\omega})=0.
\end{align*}
Then the above argument implies that 
\[
\|G(\tilde{r}, \tilde{\omega})\|_{L^2_{\tilde{\omega}}}\les \tilde{r}^{\frac{3}{2}}. 
\]
Taking the derivative in in terms of $\tilde{r}$, we derive that 
\[
s^{-1}\pa_{s} G(s, \tilde{\omega})=s^{-1} \Im((s\phi)\cdot \overline{D_{\tilde{\Lb}}(s\phi)})=s \Im(\phi\cdot \overline{D_{\tilde{\Lb}}\phi}).
\]
Integrate in terms of $s$ from $0$ to $\tilde{r}$. We can estimate that 
\begin{align*}
\int_0^{\tilde{r}}s \Im(\phi\cdot \overline{D_{\tilde{\Lb}}\phi}) ds= \int_0^{\tilde{r}}  s^{-1}\pa_{s} G(s, \tilde{\omega}) ds = \tilde{r}^{-1}G(\tilde{r}, \tilde{\omega})+\int_0^{\tilde{r}} G (s, \tilde{\omega}) s^{-2}ds.
\end{align*}
We therefore can show that 
\begin{align*}
  \| \int_0^{\tilde{r}}s \Im(\phi\cdot \overline{D_{\tilde{\Lb}}\phi}) ds \|_{L^2_{\tilde{\omega}}} &\leq  \| \tilde{r}^{-1}G(\tilde{r}, \tilde{\omega}) \|_{L^2_{\tilde{\omega}}}+\|\int_0^{\tilde{r}} s^{-2}   G(s, \tilde{\omega}) ds \|_{L^2_{\tilde{\omega}}}\\ 
  &\les \tilde{r}^{-1}\tilde{r}^{\frac{3}{2}}  +\int_0^{\tilde{r}} s^{-2}  \| G(s, \tilde{\omega}) \|_{L^2_{\tilde{\omega}}} ds \\ 
  &\les  \tilde{r}^{\frac{1}{2}}  +\int_0^{\tilde{r}} s^{-2}  s^{\frac{3}{2}} ds \\
  &\les \sqrt{\tilde{r}}.
\end{align*}
Combining the estimates \eqref{eq:lapA:0}, \eqref{eq:lapA:dF}, \eqref{eq:lapA:Fr} and \eqref{eq:lap:dl}, we have shown that 
\begin{align*}
\int_{ S_{(t_0-\tilde{r}, x_0)(\tilde{r})} }|\tilde{r}^2(\lap_A h-i\tilde{\rho} h)| d\tilde{\omega}  \les \sqrt{\tilde{r}}+ \tilde{r}^{ \frac{p-3-2\ep_1}{4}}+\tilde{r}\les \tilde{r}^{\frac{1}{4}}.
\end{align*}
Here recall that $p>4$ and $p-3-2\ep_1>1$. 

Note that on the cone $\mathcal{N}^{-}(q)$ it holds that 
\begin{align*}
1-t_0+|x_0| \leq  1-(t_0-s)+|x_0+s\tilde{\omega}|=1-t+r. 
\end{align*}
 From the representation formula \eqref{eq:rep4phi} and the bound \eqref{eq:spt:pterm}, we then obtain that 
\begin{equation*}
\begin{split}
&(1-t_0+|x_0|)^{\frac{5+2\ep -p}{2}}|\phi(t_0, x_0) |^{p_1'} \\ 
 &\les 1 + \Big|\int_{0}^{t_0}\int_{|\tilde{\omega}|=1}|\widetilde{\lap}_A h-i\widetilde{\rho}| \mathcal{M}(t_0-\tilde{r})^{\frac{1}{p_1'}}   \widetilde{r} d\widetilde{r}d\widetilde{\om} \Big|^{p_1'} +\int_0^{t_0}\mathcal{M}(t_0-\tilde{r}) \tilde{r}^{-1+\ep }d\tilde{r}\\ 
 &\les 1 + \Big|\int_{0}^{t_0}  \mathcal{M}(t_0-\tilde{r})^{\frac{1}{p_1'}}   \widetilde{r}^{-\frac{3}{4}} d\widetilde{r} \Big|^{p_1'} +\int_0^{t_0}\mathcal{M}(t_0-\tilde{r}) \tilde{r}^{-1+\ep }d\tilde{r}\\ 
 &\les 1 +  \int_0^{t_0}\mathcal{M}(t_0-\tilde{r}) \tilde{r}^{-1+\ep }d\tilde{r}.
\end{split}
\end{equation*}
For fixed $t_0$, take supreme for $|x_0|\leq 1-t_0$. By the definition of $\mathcal{M}(t)$, we then obtain that 
\[
\mathcal{M}(t_0)\les 1+\int_0^{t_0}\mathcal{M}(t_0-\tilde{r}) \tilde{r}^{-1+\ep }d\tilde{r},
\]
 which implies the uniform bound 
 \[
\mathcal{M}(t_0)\les 1,\quad \forall t_0\leq 1
 \]
 by using the following variant of 
 Gronwall's inequality.
\begin{Lem}
Let $f$ be a positive continuous function on $[0, 1]$. If there exist positive constants $A$, $B$ and $\gamma\in(0, 1)$ so that for all $t_0\in [0, 1]$ it holds that 
\begin{align*}
f(t_0)\leq A+B\int_0^{t_0} s^{-\gamma} f(t_0-s)ds.
\end{align*}
Then, there exists a constant $C$ depending only on $B$ and  $\gamma$ so that
\begin{align*}
f(t)\leq C A,\quad \forall 0\leq t\leq 1. 
\end{align*}
\end{Lem}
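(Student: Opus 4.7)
The hypothesis is a singular Volterra integral inequality of convolution type, with kernel $s^{-\gamma}$ that is integrable on $[0,1]$ since $\gamma\in(0,1)$. The plan is to iterate the inequality enough times that the composed kernel becomes bounded, at which point the classical Gronwall lemma applies. Throughout, we use that $f\in C([0,1])$ is bounded, say by $M<\infty$, which ensures the iterated integrals below are finite.

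Rewrite the assumption as $f\le A + T[f]$, where
\[
T[g](t)=B\int_0^t (t-u)^{-\gamma}g(u)\,du.
\]
Iterating gives $f\le A\sum_{k=0}^{n-1} T^k[\mathbf{1}] + T^n[f]$ for every $n\ge 1$. By Fubini and the substitution $w=u+\tau(v-u)$,
\[
\int_u^{v}(v-w)^{-\gamma}(w-u)^{-\gamma}dw = \frac{\Gamma(1-\gamma)^2}{\Gamma(2-2\gamma)}\,(v-u)^{1-2\gamma},
\]
so that inductively the convolution kernel of $T^n$ equals $[B\Gamma(1-\gamma)]^n (t-u)^{n(1-\gamma)-1}/\Gamma(n(1-\gamma))$; in particular
\[
T^k[\mathbf{1}](t)=\frac{[B\Gamma(1-\gamma)]^k\, t^{k(1-\gamma)}}{\Gamma(k(1-\gamma)+1)},\qquad T^n[f](t)\le\frac{M\,[B\Gamma(1-\gamma)]^n\, t^{n(1-\gamma)}}{\Gamma(n(1-\gamma)+1)}.
\]

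Now fix $n_0=\lceil 1/(1-\gamma)\rceil$, so that $n_0(1-\gamma)\ge 1$; then the kernel of $T^{n_0}$ is uniformly bounded on $[0,1]\times[0,1]$ by a constant depending only on $B$ and $\gamma$, and the polynomial $A\sum_{k=0}^{n_0-1}T^k[\mathbf{1}](t_0)$ is bounded on $[0,1]$ by $C_1 A$. The iterated inequality therefore reduces to
\[
f(t_0)\le C_1 A + C_2\int_0^{t_0} f(u)\,du,\qquad t_0\in[0,1],
\]
with $C_1,C_2$ depending only on $B$ and $\gamma$. Classical Gronwall then yields $f(t_0)\le C_1 A\, e^{C_2 t_0}\le CA$ on $[0,1]$. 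The only bookkeeping step is the iterated-kernel computation above, which is routine; there is no substantial obstacle, since the singular-to-regular passage after $n_0$ iterations is forced by $\gamma<1$.
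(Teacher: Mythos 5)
Your proof is correct. Note that the paper does not prove this lemma at all: it simply cites Lemma 2.2 of \cite{YangYu:MKGrough}, so there is no in-paper argument to compare against. What you give is the standard proof of the weakly singular Gronwall inequality: iterate the Volterra operator, compute the composed kernels via the Beta-function identity to get $[B\Gamma(1-\gamma)]^n(t-u)^{n(1-\gamma)-1}/\Gamma(n(1-\gamma))$, stop once $n(1-\gamma)\ge 1$ so the kernel is bounded, and finish with classical Gronwall; all steps (positivity of the operator to justify iterating the inequality, boundedness of $f$ to keep the remainder finite, the inductive kernel computation) check out, and the resulting constant depends only on $B$ and $\gamma$ as required.
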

See Lemma 2.2 in \cite{YangYu:MKGrough}.

\subsection{Proof for the main theorem \ref{thm:main:pd:imp}}    
 The previous section shows that the solution $\phi$ to the system \eqref{eq:CSH:3d} verifies the bound 
 \begin{align*}
 |\phi(t, x)|\les (1-t+r)^{ -\frac{5+2\ep -p}{2p_1'}} \mathcal{M}(t)^{\frac{1}{p_1'}}\les (1-t)^{- \frac{5+2\ep -p}{2p_1'}}\les (1-t)^{-\frac{5-p+\ep_2}{p+1}}
 \end{align*}
 for some small positive constant $\ep_2$ (depending on $p$, $\ep$ and $\ep_1$).  
By the definition of $\phi(t, x)$ and $A$ in $\R^{1+3}$, the solution $\tilde{\phi}(\tilde{ t}, \tilde{x}_1, \tilde{x}_2)$ to the equation \eqref{eq:CS:cf} also verifies the bound 
 \begin{align*}
 |\tilde{\phi}(\tilde{t}, \tilde{x}_1, \tilde{x}_2)|=|\phi|  \les (1-\tilde{t})^{-\frac{5-p+\ep_2}{p+1}}. 
 \end{align*}
 Finally in view of the conformal transformation, the original solution $\phi(t, x_1, x_2)$ (by abusing the notation, the original solution $\phi$ is different to $\phi(t, x_1, x_2, x_3)$ defined in $\R^{1+3}$) to \eqref{eq:CSH:p:2d} in $\R^{1+2}$ then satisfies the pointwise bound
\begin{align*}
|\phi(t, x_1, x_2)| & \les \La^{-\frac{1}{2}} |\tilde{\phi}(\tilde{t}, \tilde{x}_1, \tilde{x}_2)|\les (2+t+r_1)^{-\frac{1}{2}}(2+t-r_1)^{-\frac{1}{2}} (2+t-r_1)^{\frac{5-p+\ep_2}{p+1}} \\ 
&\les (2+t+r_1)^{-\frac{1}{2}} (2+|t-r_1|)^{ -\frac{3p-9-2\ep_2}{2(p+1)}}.
\end{align*} 
Here $r_1=\sqrt{x_1^2+x_2^2}$ and the solution $\phi$ vanishes out side the cone $t\leq r_1-1$ since the initial data are compactly supported. We thus finished the proof for Theorem \ref{thm:main:pd:imp}.


\begin{thebibliography}{10}

\bibitem{Alinhac01:null:2D:1}
S.~Alinhac.
\newblock The null condition for quasilinear wave equations in two space
  dimensions {I}.
\newblock {\em Invent. Math.}, 145(3):597--618, 2001.

\bibitem{Alinhac01:null:2D:2}
S.~Alinhac.
\newblock The null condition for quasilinear wave equations in two space
  dimensions. {II}.
\newblock {\em Amer. J. Math.}, 123(6):1071--1101, 2001.

\bibitem{Brezis80:LSobolev}
H.~Br\'{e}zis and T.~Gallouet.
\newblock Nonlinear {S}chr\"{o}dinger evolution equations.
\newblock {\em Nonlinear Anal.}, 4(4):677--681, 1980.

\bibitem{Brezis80:LSobolev:g}
H.~Br\'{e}zis and S.~Wainger.
\newblock A note on limiting cases of {S}obolev embeddings and convolution
  inequalities.
\newblock {\em Comm. Partial Differential Equations}, 5(7):773--789, 1980.

\bibitem{Chae02:CSH:2d}
D.~Chae and K.~Choe.
\newblock Global existence in the {C}auchy problem of the relativistic
  {C}hern-{S}imons-{H}iggs theory.
\newblock {\em Nonlinearity}, 15(3):747--758, 2002.

\bibitem{Oh17:CSH:2d:small}
M.~Chae and S.~Oh.
\newblock Small data global existence and decay for relativistic
  {C}hern-{S}imons equations.
\newblock {\em Ann. Henri Poincar\'{e}}, 18(6):2123--2198, 2017.

\bibitem{Cho21:CSH:low}
Y.~Cho and S.~Hong.
\newblock Low regularity solutions to the non-abelian {C}hern-{S}imons-{H}iggs
  system in the {L}orenz gauge.
\newblock {\em NoDEA Nonlinear Differential Equations Appl.}, 28(6):Paper No.
  70, 25, 2021.

\bibitem{newapp}
M.~Dafermos and I.~Rodnianski.
\newblock A new physical-space approach to decay for the wave equation with
  applications to black hole spacetimes.
\newblock In {\em X{VI}th {I}nternational {C}ongress on {M}athematical
  {P}hysics}, pages 421--432. World Sci. Publ., Hackensack, NJ, 2010.

\bibitem{Delort04:NLW:2D}
J.~Delort, D.~Fang, and R.~Xue.
\newblock Global existence of small solutions for quadratic quasilinear
  {K}lein-{G}ordon systems in two space dimensions.
\newblock {\em J. Funct. Anal.}, 211(2):288--323, 2004.

\bibitem{Dong23:2d:WKG}
S.~Dong, Y.~Ma, and X.~Yuan.
\newblock Asymptotic behavior of 2{D} wave-{K}lein-{G}ordon coupled system
  under null condition.
\newblock {\em Bull. Sci. Math.}, 187:Paper No. 103313, 43, 2023.

\bibitem{Dunne99:CSH}
G.~Dunne.
\newblock Aspects of {C}hern-{S}imons theory.
\newblock In {\em Aspects topologiques de la physique en basse
  dimension/{T}opological aspects of low dimensional systems ({L}es {H}ouches,
  1998)}, pages 177--263. EDP Sci., Les Ulis, 1999.

\bibitem{yang:NLW:2D}
D.Wei and S.~Yang.
\newblock On the global behaviors for defocusing semilinear wave equations in
  {$\Bbb{R}^{1+2}$}.
\newblock {\em Anal. PDE}, 15(1):245--272, 2022.

\bibitem{elliptic}
D.~Gilbarg and N.Trudinger.
\newblock {\em Elliptic Partial Differential Equations of Second Order}.
\newblock Springer-Verlag, Berlin, reprint of the 1998 edition edition, 2001.

\bibitem{velo85:global:sol:NLW}
J.~Ginibre and G.~Velo.
\newblock The global {C}auchy problem for the nonlinear {K}lein-{G}ordon
  equation.
\newblock {\em Math. Z.}, 189(4):487--505, 1985.

\bibitem{Velo87:decay:NLW}
J.~Ginibre and G.~Velo.
\newblock Conformal invariance and time decay for nonlinear wave equations.
  {I}, {II}.
\newblock {\em Ann. Inst. H. Poincar\'{e} Phys. Th\'{e}or.}, 47(3):221--261,
  263--276, 1987.

\bibitem{velo95:Strichartz:NW}
J.~Ginibre and G.~Velo.
\newblock Generalized {S}trichartz inequalities for the wave equation.
\newblock {\em J. Funct. Anal.}, 133(1):50--68, 1995.

\bibitem{Girvin05:QHE}
S.~Girvin.
\newblock Introduction to the fractional quantum {H}all effect.
\newblock In {\em The quantum {H}all effect}, volume~45 of {\em Prog. Math.
  Phys.}, pages 133--162. Birkh\"{a}user, Basel, 2005.

\bibitem{Pecher82:NLW:2d}
R.~Glassey and H.~Pecher.
\newblock Time decay for nonlinear wave equations in two space dimensions.
\newblock {\em Manuscripta Math.}, 38(3):387--400, 1982.

\bibitem{Hong90:CSH}
J.~Hong, Y.~Kim, and P.~Pac.
\newblock Multivortex solutions of the abelian {C}hern-{S}imons-{H}iggs theory.
\newblock {\em Phys. Rev. Lett.}, 64(19):2230--2233, 1990.

\bibitem{Huh05:CSH}
H.~Huh.
\newblock Low regularity solutions of the {C}hern-{S}imons-{H}iggs equations.
\newblock {\em Nonlinearity}, 18(6):2581--2589, 2005.

\bibitem{Huh07:CSH}
H.~Huh.
\newblock Local and global solutions of the {C}hern-{S}imons-{H}iggs system.
\newblock {\em J. Funct. Anal.}, 242(2):526--549, 2007.

\bibitem{Huh11:CSH:energy}
H.~Huh.
\newblock Towards the {C}hern-{S}imons-{H}iggs equation with finite energy.
\newblock {\em Discrete Contin. Dyn. Syst.}, 30(4):1145--1159, 2011.

\bibitem{Huh14:CSH:1d}
H.~Huh.
\newblock Global energy solutions of {C}hern-{S}imons-{H}iggs equations in one
  space dimension.
\newblock {\em J. Math. Anal. Appl.}, 420(1):781--791, 2014.

\bibitem{Huh17:CS:conformal}
H.~Huh.
\newblock Conformal identity of the {C}hern-{S}imons gauged wave equations and
  its applications.
\newblock {\em J. Hyperbolic Differ. Equ.}, 14(2):341--347, 2017.

\bibitem{Huh20:MKG:1d}
H.~Huh.
\newblock {$L^\infty$} bounds for {M}axwell-gauged equations in
  {$\Bbb{R}^{1+1}$} and their applications.
\newblock {\em J. Math. Anal. Appl.}, 488(2):124064, 10, 2020.

\bibitem{Oh16:CSH:low}
H.~Huh and S.~Oh.
\newblock Low regularity solutions to the {C}hern-{S}imons-{D}irac and the
  {C}hern-{S}imons-{H}iggs equations in the {L}orenz gauge.
\newblock {\em Comm. Partial Differential Equations}, 41(3):375--397, 2016.

\bibitem{Huh21:CSH:Longtime}
H.~Huh and J.~Yim.
\newblock {$L^\infty$} bounds for {C}hern-{S}imons gauged equations in
  {$\Bbb{R}^{1+1}$} and their applications.
\newblock {\em J. Math. Phys.}, 62(5):Paper No. 051502, 9, 2021.

\bibitem{Ionescu20:EKG}
A.~Ionescu and B.~Pausader.
\newblock Global regularity of solutions of the {E}instein-{K}lein-{G}ordon
  system: a review.
\newblock {\em Quart. Appl. Math.}, 78(2):277--303, 2020.

\bibitem{Jackiw90:CSH}
R.~Jackiw and E.~Weinberg.
\newblock Self-dual {C}hern-{S}imons vortices.
\newblock {\em Phys. Rev. Lett.}, 64(19):2234--2237, 1990.

\bibitem{Taubes80:Vort}
A.~Jaffe and C.~Taubes.
\newblock {\em Vortices and monopoles}, volume~2 of {\em Progress in Physics}.
\newblock Birkh\"{a}user, Boston, MA, 1980.
\newblock Structure of static gauge theories.

\bibitem{Tao98:endStri}
M.~Keel and T.~Tao.
\newblock Endpoint {S}trichartz estimates.
\newblock {\em Amer. J. Math.}, 120(5):955--980, 1998.

\bibitem{klinvar}
S.~Klainerman.
\newblock Uniform decay estimates and the lorentz invariance of the classical
  wave equation.
\newblock {\em Comm. Pure Appl. Math.}, 38(3):321--332, 1985.

\bibitem{Klainerman06:trace}
S.~Klainerman and I.~Rodnianski.
\newblock Sharp trace theorems for null hypersurfaces on {E}instein metrics
  with finite curvature flux.
\newblock {\em Geom. Funct. Anal.}, 16(1):164--229, 2006.

\bibitem{Kl:paramatrix}
S.~Klainerman and I.~Rodnianski.
\newblock A {K}irchoff-{S}obolev parametrix for the wave equation and
  applications.
\newblock {\em J. Hyperbolic Differ. Equ.}, 4(3):401--433, 2007.

\bibitem{Yang:mMKG}
S.~Klainerman, Q.~Wang, and S.~Yang.
\newblock Global solution for massive {M}axwell-{K}lein-{G}ordon equations.
\newblock {\em Comm. Pure Appl. Math.}, 73(1):63--109, 2020.

\bibitem{tao12:1d:NLW}
H.~Lindblad and T.~Tao.
\newblock Asymptotic decay for a one-dimensional nonlinear wave equation.
\newblock {\em Anal. PDE}, 5(2):411--422, 2012.

\bibitem{Yang:NLW:1d}
G.~Luli, S.~Yang, and P.~Yu.
\newblock On one-dimension semi-linear wave equations with null conditions.
\newblock {\em Adv. Math.}, 329:174--188, 2018.

\bibitem{Taran99:CSH}
M.~Nolasco and G.~Tarantello.
\newblock Double vortex condensates in the {C}hern-{S}imons-{H}iggs theory.
\newblock {\em Calc. Var. Partial Differential Equations}, 9(1):31--94, 1999.

\bibitem{Okamoto13:CSD:2d}
M.~Okamoto.
\newblock Well-posedness of the {C}auchy problem for the
  {C}hern-{S}imons-{D}irac system in two dimensions.
\newblock {\em J. Hyperbolic Differ. Equ.}, 10(4):735--771, 2013.

\bibitem{Pecher82:decay:3d}
H.~Pecher.
\newblock Decay of solutions of nonlinear wave equations in three space
  dimensions.
\newblock {\em J. Funct. Anal.}, 46(2):221--229, 1982.

\bibitem{Pecher16:CSH}
H.~Pecher.
\newblock Global well-posedness in energy space for the
  {C}hern-{S}imons-{H}iggs system in temporal gauge.
\newblock {\em J. Hyperbolic Differ. Equ.}, 13(2):331--351, 2016.

\bibitem{Selberg13:CSH:low}
S.~Selberg and A.~Tesfahun.
\newblock Global well-posedness of the {C}hern-{S}imons-{H}iggs equations with
  finite energy.
\newblock {\em Discrete Contin. Dyn. Syst.}, 33(6):2531--2546, 2013.

\bibitem{Spruck95:CSH}
J.~Spruck and Y.~Yang.
\newblock Topological solutions in the self-dual {C}hern-{S}imons theory:
  existence and approximation.
\newblock {\em Ann. Inst. H. Poincar\'{e} C Anal. Non Lin\'{e}aire},
  12(1):75--97, 1995.

\bibitem{stein:harmonic}
E.~Stein.
\newblock {\em Harmonic analysis: real-variable methods, orthogonality, and
  oscillatory integrals}, volume~43 of {\em Princeton Mathematical Series}.
\newblock Princeton University Press, Princeton, NJ, 1993.
\newblock With the assistance of Timothy S. Murphy, Monographs in Harmonic
  Analysis, III.

\bibitem{Strauss:NLW:decay}
W.~Strauss.
\newblock Decay and asymptotics for {$ \Box u=F(u)$}.
\newblock {\em J. Functional Analysis}, 2:409--457, 1968.

\bibitem{Taran96:CSH}
G.~Tarantello.
\newblock Multiple condensate solutions for the {C}hern-{S}imons-{H}iggs
  theory.
\newblock {\em J. Math. Phys.}, 37(8):3769--3796, 1996.

\bibitem{vonWahl72:decay:NLW:super}
W.~von Wahl.
\newblock Some decay-estimates for nonlinear wave equations.
\newblock {\em J. Functional Analysis}, 9:490--495, 1972.

\bibitem{YangYu:MKGrough}
D.~Wei, S.~Yang, and P.~Yu.
\newblock {On the global dynamics of Yang-Mills-Higgs equations}.
\newblock 2022.
\newblock ar{X}iv:2203.12143.

\bibitem{yang:scattering:NLW}
S.~Yang.
\newblock Global behaviors of defocusing semilinear wave equations.
\newblock {\em Ann. Sci. \'{E}c. Norm. Sup\'{e}r. (4)}, 55(2):405--428, 2022.

\bibitem{yang:NLW:ptdecay:3D}
S.~Yang.
\newblock Pointwise decay for semilinear wave equations in {$\Bbb{R}^{1 + 3}$}.
\newblock {\em J. Funct. Anal.}, 283(2):Paper No. 109486, 59, 2022.

\bibitem{YangYu:MKG:smooth}
S.~Yang and P.~Yu.
\newblock On global dynamics of the {M}axwell-{K}lein-{G}ordon equations.
\newblock {\em Camb. J. Math.}, 7(4):365--467, 2019.

\bibitem{yuan17:CSH}
J.~Yuan.
\newblock Local well-posedness of non-abelian {C}hern-{S}imons-{H}iggs system
  in the {L}orenz gauge.
\newblock {\em J. Math. Phys.}, 58(7):071505, 10, 2017.

\end{thebibliography}
\bibliographystyle{plain}

\end{document}